\newtheorem{theorem}{Theorem}[section]
\newtheorem{lemma}[theorem]{Lemma}
\newtheorem{definition}[theorem]{Definition}
\newtheorem*{theorem*}{Theorem}
\newtheorem{example}[theorem]{Example}
\newtheorem{proposition}[theorem]{Proposition}
\newtheorem{remark}[theorem]{Remark}
\newcommand{\RR}{\mathbb{R}}
\newcommand{\R}{\mathbb{R}}
\newcommand{\Tn}{\mathcal{U}_m}
\newcommand{\Trop}{\text{Trop}}
\newcommand{\T}{\mathbb R^e \!/\mathbb R {\bf 1}}
\newcommand{\tconv}{\text{tconv\,}}
\begin{document}

\title{Hit and Run Sampling \\from Tropically Convex Sets}
\author{Ruriko Yoshida  \and  Keiji Miura \and David Barnhill}
\date{}

\maketitle

\begin{abstract}
    In this paper we propose Hit and Run (HAR) sampling from a tropically convex set. The key ingredient of HAR sampling from a tropically convex set is sampling uniformly from a tropical line segment over the tropical projective torus, which runs linearly in its computational time complexity.  We show that this HAR sampling method samples uniformly from a tropical polytope which is the smallest tropical convex set of finitely many vertices.  Finally, we apply this novel method to any given distribution using Metropolis-Hasting filtering over a tropical polytope. 
\end{abstract}

\section{Introduction}

Hit and Run (HAR) sampling is one of the most popular Markov Chain Monte Carlo (MCMC) methods used to sample random points from an arbitrary distribution over a closed convex set in an Euclidean space using a ``line.''

In 1971, Turcin introduced the basic structure for a HAR sampler to generate points in an Euclidean space~\citep{Turcin}.
Then, in 1979, Boneh and Golen developed an HAR sampler to sample uniformly from a compact convex set~\citep{Boneh}.
In 1984, Smith worked on geometric variations and convergence of a sample via HAR sampler \citep{smith} and then, in 1993, Belisle et al.~extended this to a general distribution~\citep{Belisle}.
In several publications, Lov\'asz and Vempalla studied convergence rates of a HAR sampler in high dimensional Euclidean space \citep{Lovasz1999,Lovasz2003,Lovasz2006}.
In 2018, Chen et al.~developed a fast HAR sampler to generate random points over a polytope in an Euclidean space \citep{fastHAR}.
Corte and Montiel developed a Matrix HAR (MHAR) algorithm to sample points from a polytope over an Euclidean space \citep{MHR}.
See \cite{Luengo} for more recent work in MCMC samplers. 

While use of HAR samplers over Euclidean space has been studied thoroughly, there has been little research related to HAR sampling in the tropical projective torus $\mathbb R^e \!/\mathbb R {\bf 1}$ which is isometric to $\R^{e-1}$.
In this research, we propose a HAR sampler with the tropical metric to sample random points from arbitrary distribution on a tropically convex set over a tropical projective torus in terms of the max-plus arithmetic.

The key ingredient of a HAR sampler is to sample a random point from a ``line'' over a given closed set.  In the tropical setting, we use a tropical line segment over a tropically convex set as a ``line'' in a HAR sampler.  Specifically, we use the fact that a tropical line segment is ``intrinsic,'' that is, the fact that since a tropical line segment between two points is tropically convex if end points of the tropical line segment are inside of a tropically convex set, all points in the tropical line segment are inside of the tropically convex set.  Therefore, if we know how to sample a random point inside of a tropical line segment, we can apply it to a HAR sampler over a tropically convex set. In this paper we develop a novel HAR sampler to sample random points from a {\em tropical polytope} which is a  tropically convex set of finitely many vertices over the tropical projective torus.  Our method iteratively runs a Markov chain by the following steps: (1) compute tropical convex hulls of random subsets of vertices; (2) project the current point in the given tropical polytope onto these tropical convex hulls computed in the previous step (extrapolation); (3) sample a point uniformly from a tropical line segment between the projections computed at Step 2; (4) set the point randomly sampled as the starting point for the next iteration; and then (5) iterate step (1) through step (4) until it converges.  We call our MCMC sampler the {\em vertex HAR using extrapolation}. Our main result in this paper is 
\begin{theorem*}
The vertex HAR using extrapolation described in Algorithm \ref{alg:HAR_extrapolation} and Algorithm \ref{alg:HAR_extrapolation2} samples random points uniformly from a given tropical polytope.  
\end{theorem*}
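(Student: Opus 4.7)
The plan is to verify the classical trio of conditions that forces an MCMC sampler to converge in total variation to a target distribution: (i) the Markov chain $\{X_n\}$ produced by the vertex HAR using extrapolation is well-defined and lives inside the tropical polytope $P=\tconv(V)$, (ii) the chain is $\mu$-irreducible and aperiodic for the uniform measure $\mu$ on $P$, and (iii) $\mu$ is stationary, which I would establish through detailed balance. Once these three pieces are in place, the conclusion follows from a standard Harris-type ergodic theorem applied to the Markov kernel on $P$.

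For the state-space containment in (i), the plan is to invoke the intrinsic property of tropical line segments recalled in the introduction: each projection sends the current point to the tropical convex hull of a subset of $V$, which sits inside $P$; the new point is then sampled from the tropical segment between two such projections, and tropical convexity closes up. Step (ii) I would obtain by showing that as the random pair of vertex subsets ranges over all non-trivial choices, the family of candidate segments $[p_1,p_2]$ covers a neighborhood of any $y\in P$ with positive probability, so iterating a bounded number of extrapolation steps suffices to reach any open subset of $P$; aperiodicity follows because the one-step kernel has a density that is bounded below on a neighborhood of the current point.

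Step (iii) is the main obstacle and the heart of the proof. The strategy is to write the transition $x\to y$ as a composition of three independent pieces: the choice of subsets $S_1,S_2\subseteq V$, the deterministic tropical projections $p_i=\pi_{\tconv(S_i)}(x)$, and the uniform draw from the tropical segment $[p_1,p_2]$. I would then argue reversibility by showing that, conditional on the same $S_1,S_2$, the pair of projections associated with the candidate next state $y$ reproduces $(p_1,p_2)$; this is the tropical analog of the Euclidean fact that the line through $x$ in a fixed direction coincides with the line through $y$ in the same direction, and it will rely on the idempotence and non-expansive behavior of tropical projection onto $\tconv(S_i)$. Once this invariance of the auxiliary variables is established, the one-dimensional uniform sampling on $[p_1,p_2]$ against tropical arclength yields $\mu(dx)K(x,dy)=\mu(dy)K(y,dx)$ by Fubini. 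The delicate point I expect to wrestle with is precisely this projection-invariance property: unlike in the Euclidean case, the tropical projection of an arbitrary point on $[p_1,p_2]$ back to $\tconv(S_i)$ need not return $p_i$, so the argument must exploit additional structure of the extrapolation step, most plausibly that $p_1$ and $p_2$ lie on opposing tropical faces of $P$, or it must decompose the uniform measure along the covector cell structure of $P$ and verify detailed balance cell by cell.
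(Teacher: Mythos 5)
Your overall architecture (containment, irreducibility, detailed balance, then a standard ergodic theorem) matches the paper's strategy in spirit, and you have correctly located the crux: whether the projections computed from the proposed point $y$ reproduce the pair of extrapolated endpoints computed from $x$. But the proposal stops exactly there, and that step is not a technicality you can defer --- it is the whole theorem, and in the generality you set up it is false. The paper itself exhibits a counterexample (Example~\ref{ex:counter_ex}, the polytope with vertices $(0,2,1)$, $(0,-1,0)$, $(0,0,-2)$): there are points $A,B$ in a general tropical polytope with $P(A,B)>0$ but $P(B,A)=0$, because $A$ and $B$ need not project to the same point of $\tconv(\{v^1,\dots,v^s\}\setminus U)$, i.e.\ the extrapolated segment through $A$ and $B$ is not unique and the move is not reversible. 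Consequently no cell-by-cell bookkeeping can rescue detailed balance for an arbitrary tropical polytope. What the paper actually proves is Proposition~\ref{prop:symmetric_proposal_distribution} \emph{conditionally} on uniqueness of the connecting segment, and then a theorem restricted to full-dimensional \emph{polytropes} $\mathcal{P}=\tconv(v^1,\dots,v^e)$: using the covector/type decomposition (Theorem 15 of \citep{DS}), for a polytrope the map $x\mapsto\lambda$ with $\lambda_l=\min(x-v^l)$ is one-to-one, the minima are attained at distinct coordinates $S(l)$, and hence $\pi_{\mathcal{P}^{-i}}(A)=\pi_{\mathcal{P}^{-i}}(B)$ exactly when $v^i,A,B$ lie on a common segment; this forces the connecting extrapolated segment to be unique and reversible, which is the missing ingredient your step (iii) needs. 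Your parenthetical suggestion to ``decompose along the covector cell structure'' points in the right direction, but without the injectivity argument (and without restricting the class of polytopes) it does not close the gap.

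A secondary problem is your aperiodicity claim in step (ii): the one-step kernel of Algorithms~\ref{alg:HAR_extrapolation} and~\ref{alg:HAR_extrapolation2} is supported on only $s$ (respectively $\tfrac{2^s-2}{2}$) tropical segments through the current point, a one-dimensional set, so it has no density bounded below on a full-dimensional neighborhood; unlike classical HAR there is no continuum of directions. The paper instead argues connectivity by induction on the number of vertices (Proposition~\ref{pro:connectivity}), routing any two points through their projections onto $\tconv(v^1,\dots,v^s)$, and handles the distributional claim through the symmetry of the proposal plus detailed balance (Remark~\ref{rem:dist}) rather than through a Harris-type minorization of the one-step kernel.
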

Then we show that we can sample from any given distribution over a tropical polytope using our HAR sampler with the tropical metric combined with the Metropolis-Hasting filtering.  In addition, we discuss how to apply our HAR sampler to a {\em space of phylogenetic trees} with a given set of leaf labels, which is a tropically convex set.  We end this paper with a discussion on how our HAR sampler with the tropical metric can be applied to estimation of the volume of a tropical polytope over the tropical projective torus, phylogenomics, and extreme value statistics on causal inference.   


This paper is organized as follows: in Section~\ref{BASICS} we discuss basics of tropical arithmetic that are the building blocks of tropical HAR samplers.  Section~\ref{T_SAMPLE} discusses HAR sampling from a tropically convex set beginning with sampling from a tropical line segment and building up to sampling from a tropical polytope.  Section~\ref{EXP} illustrates computational experiments of tropical HAR samplers introduced in Section~\ref{T_SAMPLE} on tropical polytopes.  Finally, in Section~\ref{sec:ultra}, we apply HAR samplers to a space of ultrametrics related to phylogenetic trees.  

We conduct computational experiments with {\tt R}, statistical computational tool.  Our {\tt R} code used for this paper is available upon a request to the first author.  

\section{Tropical Basics}\label{BASICS}

Throughout this paper, we consider the tropical projective torus $\mathbb R^e \!/\mathbb R {\bf 1}$ which is isomorphic to $\R^{e-1}$.
For more details, see \cite{ETC} and \cite{MS}.

\begin{definition}[Tropical Arithmetic Operations]
Under the tropical semiring $(\,\mathbb{R} \cup \{-\infty\},\oplus,\odot)\,$, we have the tropical arithmetic operations of addition and multiplication defined as:
$$x \oplus y := \max\{x, y\}, ~~~~ x \odot y := x + y ~~~~\mbox{  where } x, y \in \mathbb{R}\cup\{-\infty\}.$$
Note that $-\infty$ is the identity element under addition $\oplus$ and $0$ is the identity element under multiplication $\odot$ over this semiring.
\end{definition}

\begin{definition}[Tropical Scalar Multiplication and Vector Addition]
For any $x,y \in \mathbb{R}\cup \{-\infty\}$ and for any $v = (v_1, \ldots ,v_e),\; w= (w_1, \ldots , w_e) \in (\mathbb{R}\cup-\{\infty\})^e$, we have tropical scalar multiplication and tropical vector addition defined as:
$$x \odot v \oplus y \odot w := (\max\{x+v_1,y+w_1\}, \ldots, \max\{x+v_e,y+w_e\}).$$
\end{definition}

\begin{definition}\label{def:polytope}
Suppose we have $S \subset \mathbb R^e \!/\mathbb R {\bf 1}$. If 
\[
x \odot v \oplus y \odot w \in S
\]
for any $x, y \in \R$ and for any $v, w \in S$, then $S$ is called {\em tropically convex}.
Suppose $V = \{v^1, \ldots , v^s\}\subset \mathbb R^e \!/\mathbb R {\bf   1}$.  The smallest tropically-convex subset containing $V$ is called the {\em tropical convex hull} or {\em tropical polytope} of $V$ which can be written as the set of all tropical linear combinations of $V$
$$ \mathrm{tconv}(V) = \{a_1 \odot v^1 \oplus a_2 \odot v^2 \oplus \cdots \oplus a_s \odot v^s \mid  a_1,\ldots,a_s \in \R \}.$$
A {\em tropical line segment} between two points $v^1, \, v^2$ is a tropical polytope, $\mathcal{P}$, of a set of two points $\{v^1, \, v^2\} \subset \mathbb R^e \!/\mathbb R {\bf   1}$.
\end{definition}

\begin{definition}[Generalized Hilbert Projective Metric]
\label{eq:tropmetric} 
For any points $v:=(v_1, \ldots , v_e), \, w := (w_1, \ldots , w_e) \in \mathbb R^e \!/\mathbb R {\bf 1}$,  the {\em tropical distance} (also known as {\em tropical metric}) $d_{\rm tr}$ between $v$ and $w$ is defined as:
\begin{equation*}
d_{\rm tr}(v,w)  := \max_{i \in \{1, \ldots , e\}} \bigl\{ v_i - w_i \bigr\} - \min_{i \in \{1, \ldots , e\}} \bigl\{ v_i - w_i \bigr\}.
\end{equation*}
\end{definition}
Next we remind the reader of the definition of a projection in terms of the tropical metric onto a tropical polytope.
The tropical projection formula can be found as Formula 5.2.3 in \cite{MS}. 
\begin{definition}\label{def:proj}
Let $V:= \{v^1, \ldots, v^s\} \subset \mathbb{R}^e/{\mathbb R} {\bf 1}$ and let $\mathcal P = \tconv(v^1, \ldots, v^s)\subseteq \mathbb R^{e}/\RR{\bf 1}$ be a tropical polytope with its vertex set $V$.
For $x \in \T$, let
\begin{equation}\label{eq:tropproj} 
\pi_\mathcal{P} (x) \!:=\! \bigoplus\limits_{l=1}^s \lambda_l \odot  v^l, ~ ~ {\rm where} ~ ~ \lambda_l \!=\! {\rm min}(x-v^l).
\end{equation}

Then 
\[
d_{\rm tr}(x, \pi_\mathcal{P} (x))  \leq d_{\rm tr}(x, y)
\]
for all $y \in \mathcal P$.  In other words, $\pi_\mathcal{P} (x)$ is the projection of $x \in \T$ in terms of the tropical metric $d_{\rm tr}$ onto the tropical polytope $\mathcal P$.
\end{definition}

We are interested in the region of ambient points in terms of $\pi_\mathcal{P}(x)$.
According to the projection rule, i.e.,  Equation \eqref{eq:tropproj}, two general nearby points 
are projected to the same position if they have the same $\lambda_l$ for all $l$.
This condition takes place at least when the minimum in ${\rm min}(x-v^l)$ in Equation \eqref{eq:tropproj} is attained at the same (say, $j$-th) coordinate for all $l$.
Thus, we consider the region of $x$ where $\lambda_l$ for all $l$ includes $x_j$ for fixed $j$, i.e., $\lambda_l=x_j -v^l_j$ for all $l$ so that all the points in that region have the same $\lambda_l$.
In fact, $\lambda$ becomes a constant as $\lambda_l = -w^l_j$ after $x_j {\bf   1}$ is subtracted under $\mathbb{R}^e/\mathbb R {\bf   1}$.
And, thus, $\pi_\mathcal{P}(x) = \lambda \cdot V$ for all $x$ in the region represents the same point.
This argument can be summarized as Lemma \ref{lm:project}.

\begin{figure}[t!]
 \centering
 \includegraphics[width=0.45\textwidth]{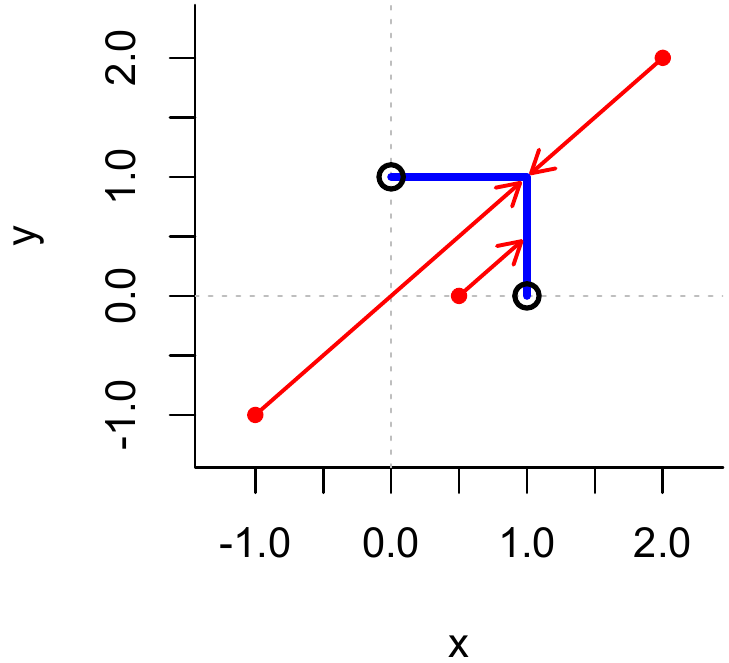} ~
 \includegraphics[width=0.45\textwidth]{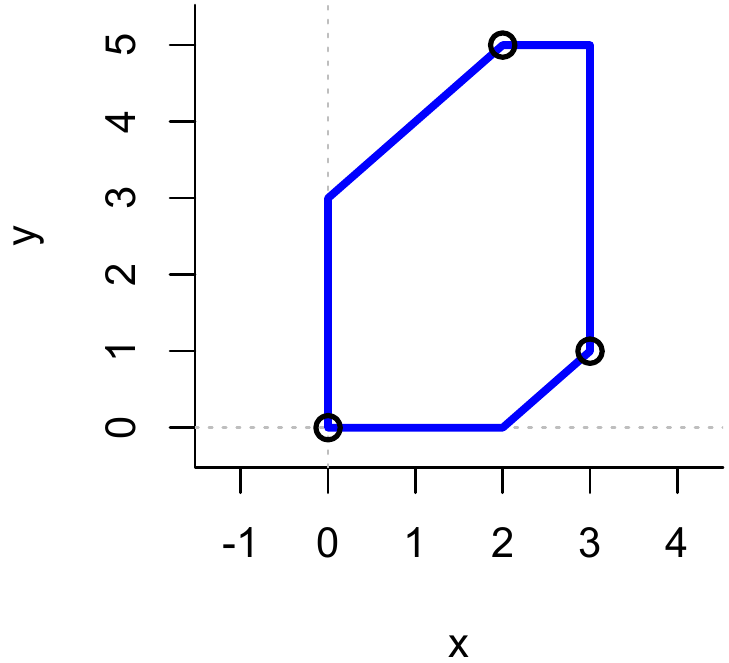}\newline\newline
 \includegraphics[width=0.45\textwidth]{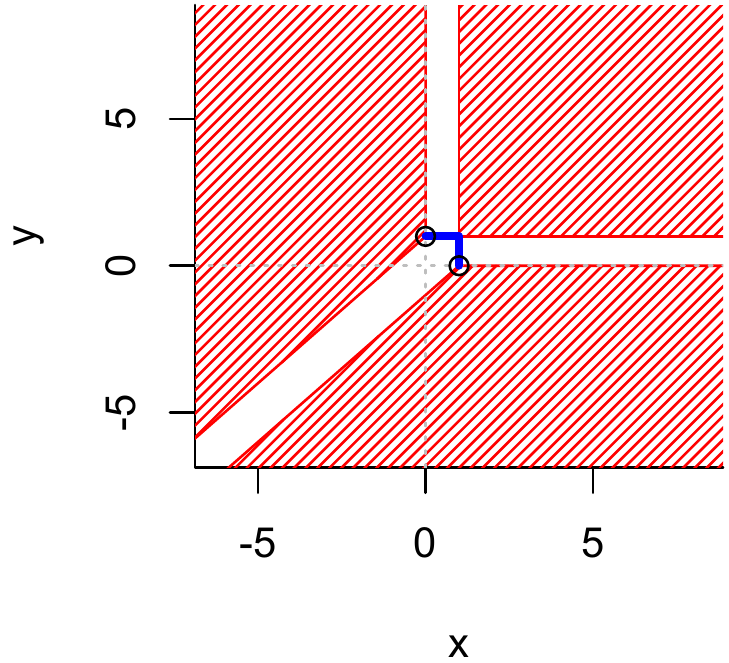} ~
 \includegraphics[width=0.45\textwidth]{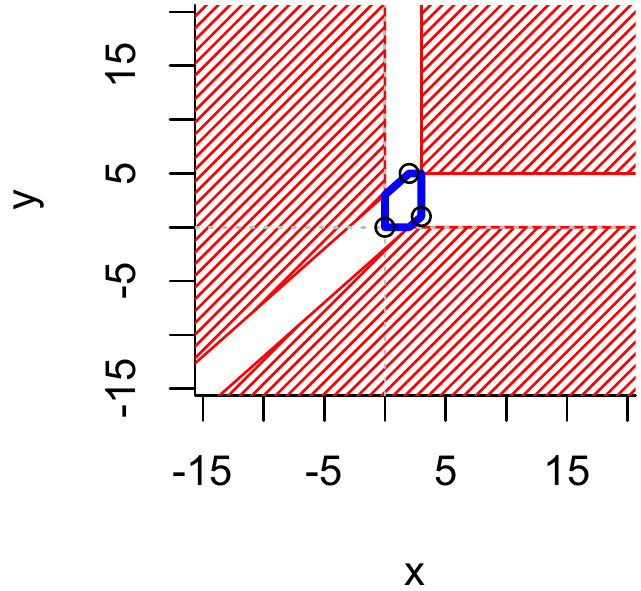}
\caption{(top left) Tropical polytope of two points $(0, 1, 0), \, (0, 0, 1)$ in $\mathbb R^3 \!/\mathbb R {\bf 1}$. (top right) Tropical polytope of three points $(0, 0, 0), \, (0, 3, 1), \, (0, 2, 5)$ in $\mathbb R^3 \!/\mathbb R {\bf 1}$. (bottom left) Three regions that are projected to $(0, 1, 0), \, (0, 0, 1), \, (0, 1, 1)$. The points in each of the three red colored regions are projected to the same point. Note the red regions are fairly wide and most of the points that are randomly sampled on the plane can be projected only to one of the three points. (bottom right) Three regions that are projected to $(0, 0, 3), \, (0, 2, 0), \, (0, 3, 5)$ of a unit polytope. The points in each of the three red colored regions are projected to the same point. Note the red regions are fairly wide and most of the points that are randomly sampled on the plane can be projected only to one of the three points.}
\label{fig:tropPoly}
\end{figure}

\begin{lemma}\label{lm:project}
Let $\mathcal P = \tconv(v^1, \ldots, v^s)\subseteq \mathbb R^{e}/\RR{\bf 1}$ be a tropical polytope with its vertex set $\{v^1, \ldots, v^s\} \subset \mathbb{R}^e/{\mathbb R} {\bf 1}$ where $v^l:=(v_1^l, \ldots, v_e^l)$ for $l = 1, \ldots , s$.
Let $x=(x_1, \ldots , x_e) \in \T$ such that $x_j \leq x_k + \min_{l= 1, \ldots , e}(v_j^l-v_k^l)$ for fixed $j$ and for all $k$.
Then $\pi_\mathcal{P}(x)_i = \max_l(v_i^l-v_j^l)$ with $\lambda_l=-v_j^l$.
That is, all the points $x$ satisfying the above inequalities are projected to the same point.
\end{lemma}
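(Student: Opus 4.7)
My plan is to verify the conclusion by a direct substitution into the projection formula \eqref{eq:tropproj}, treating the hypothesis purely as a statement about which indices attain the minimum in the definition of $\lambda_l$. The first step is to rewrite the hypothesis: the inequality $x_j \leq x_k + \min_{l}(v_j^l - v_k^l)$ holding for every coordinate $k$ is equivalent, after rearranging, to
\[
x_j - v_j^l \;\leq\; x_k - v_k^l \quad \text{for every vertex index } l \in \{1,\ldots,s\} \text{ and every coordinate } k \in \{1,\ldots,e\}.
\]
From this, the coordinate-wise minimum $\min_{k}(x_k - v_k^l)$ appearing in the formula for $\lambda_l$ is attained at $k = j$, so $\lambda_l = x_j - v_j^l$ for every $l$.

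Once the $\lambda_l$ are pinned down, the rest of the argument is pure computation. Substituting $\lambda_l = x_j - v_j^l$ into \eqref{eq:tropproj} gives, in the $i$-th coordinate,
\[
\pi_{\mathcal P}(x)_i \;=\; \max_{l}\bigl(\lambda_l + v_i^l\bigr) \;=\; \max_{l}\bigl(x_j - v_j^l + v_i^l\bigr) \;=\; x_j + \max_{l}\bigl(v_i^l - v_j^l\bigr).
\]
Passing to the quotient $\mathbb R^e/\mathbb R \mathbf{1}$, I subtract the constant vector $x_j\mathbf{1}$ and obtain $\pi_{\mathcal P}(x)_i = \max_{l}(v_i^l - v_j^l)$ with the corresponding normalized coefficients $\lambda_l = -v_j^l$, exactly as claimed. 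Because the resulting expression depends only on the vertices $v^1, \ldots, v^s$ and on the distinguished coordinate $j$, and not on the particular $x$ satisfying the hypothesis, every such $x$ projects to the same point of $\mathcal P$.

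The main obstacle is essentially bookkeeping rather than anything deep: one must keep the vertex index $l \in \{1,\ldots,s\}$ separate from the coordinate index $k \in \{1,\ldots,e\}$ when tracking the $\min$ and $\max$ operations, and must remember that the scalar tropical shift $x_j \mathbf{1}$ is invisible in the quotient $\mathbb R^e/\mathbb R \mathbf{1}$, which is what allows the clean normalization $\lambda_l = -v_j^l$ in the statement. Beyond this, the lemma is simply an unpacking of Definition~\ref{def:proj} under the given inequality.
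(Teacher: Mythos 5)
Your proposal is correct and follows essentially the same route as the paper: both rearrange the hypothesis to get $x_j - v_j^l \leq x_k - v_k^l$ for all $k$ and $l$, conclude $\lambda_l = \min_k(x_k - v_k^l) = x_j - v_j^l$, and then observe that modulo $\mathbb{R}\mathbf{1}$ the projection depends only on $j$ and the vertices. The paper's proof actually stops after establishing $\lambda_l = x_j - v_j^l$ (leaving the substitution and the quotient normalization to the discussion preceding the lemma), so your explicit computation of $\pi_{\mathcal P}(x)_i = \max_l(v_i^l - v_j^l)$ is a slightly more complete write-up of the same argument.
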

\begin{proof}
Let $x_j \leq x_k + \min_{l= 1, \ldots , e}(v_j^l-v_k^l)$ for all $k$.
Then $x_j \leq x_k + (v_j^l-v_k^l)$ for all $k$ and all $l$.
Or $x_j - v_j^l \leq x_k -v_k^l$ for all $k$ and all $l$.
Then $\lambda_l = {\rm min}(x-v^l) = x_j - v_j^l$ for all $l$.
\end{proof}

\begin{remark}
When $x$ is far away from the origin, i.e. in the limit of small $w$, the condition for the $j$-th region becomes $x_j \leq x_k$ for $j \neq k$.
That is, $j$-th region is the region where $j$-th coordinate $x_j$ is the minimum.
Union of the 1st, 2nd, \ldots, and $e$-th regions cover the entire space, because at least one of the coordinates must be the minimum.
This suggest that almost all the points are project to only $j$ points.
\end{remark}

\begin{example}
We consider the tropical polytope of two points $(0, 1, 0), \, (0, 0, 1)$ in $\mathbb R^3 \!/\mathbb R {\bf 1}$ in Figure \ref{fig:tropPoly} (Left). Note that this tropical line segment passes through $(0,1,1)$.

First, to gain an intuition for the projection, we consider some example points in Figure \ref{fig:tropPoly} (Top Left).
For $x=(0,2,2)$, we have $\pi_{\mathcal{P}}(x) = (0, 1, 1)$ with $\lambda_1=\lambda_2=0$.
For $x=(0,0,0)$, we have $\pi_{\mathcal{P}}(x) = (0, 1, 1)$ with $\lambda_1=\lambda_2=-1$.
For $x=(0,1/2,0)$, we have $\pi_{\mathcal{P}}(x) = (0, 0, 1/2)$ with $\lambda_1=1/2, \lambda_2=-1$.
Note that two different points that are projected to the same point can have different values of $\lambda$, although $\lambda_2-\lambda_1$ is unique.

Importantly, the general result in Lemma \ref{lm:project} demonstrates that there is a notable tendency that most points far from the origin are projected to the same point.
The points in the region $x_3 \leq x_1=0$ and $x_3\leq x_2 -1$ are projected to $(0,1,0)$, which corresponds the $j=3$ case where $x_3$ or $x_3 - 1$ is always the minimum when we determine $\lambda$.
For example, for $x=(0,100,-1000)$, we have $\pi_{\mathcal{P}}(x) = (0, 1, 0)$ with $\lambda_1=-1000, \lambda_2=-1001$.
The points in the region $0=x_1 \leq x_2 -1$ and $0=x_1\leq x_3 -1$ are projected to $(0,1,1)$, which corresponds the $j=1$ case where $x_1$ is always the minimum when we determine $\lambda$.
The points in the region $x_2 \leq x_1=0$ and $x_2\leq x_3 -1$ are projected to $(0,0,1)$, which corresponds the $j=2$ case where $x_2$ or $x_2 - 1$ is always the minimum when we determine $\lambda$.
Only the other points in the very narrow regions are projected to the points other than the above three points of the polytope.
\end{example}

\begin{example}
We consider the tropical polytope of three points, $(1, 0, 0)$, $(0, 1, 0)$, and $(0, 0, 1)$,
in $\mathbb R^3 \!/\mathbb R {\bf 1}$ in Figure \ref{fig:tropPoly} (Right).
By Lemma \ref{lm:project}, the points in each of the three red regions in Figure \ref{fig:tropPoly} (Bottom Right) are projected to $(0, 1, 1)$, $(0, 0, -1)$, $(0, -1, 0)$, respectively.
Note that only the points in the very narrow white region in the entire plane are projected to the other points of the polytope.
\end{example}

\section{Sampling from a Tropical Convex Hull}\label{T_SAMPLE}

\subsection{Sampling from a Tropical Line Segment}

From the proof of Proposition 5.2.5 in \cite{MS}, a tropical line segment with a given pair of vectors $u=:(u_1, \ldots , u_e), v := (v_1, \ldots , v_e) \in \T$  can be written as follows:  Without loss of generality, we assume that $(v_1 - u_1) \geq \ldots \geq (v_{e-1} - u_{e-1}) \geq (v_e - u_e) = 0$ after permuting coordinates of $v - u$.
Then the tropical line segment $\Gamma_{u, v}$ from $v$ to $u$ is
\begin{equation}\label{eq:troline}
\left\{
\begin{array}{ccl}
\!\!\!(v_e \!-\! u_e) \odot u \oplus v \!\!\!\! &=& \!\!\! v\\ 
\!\!\!(v_{e-1} \!-\! u_{e-1}) \odot u \oplus v \!\!\!\!&=&\!\!\! (v_1, v_2, v_3, \ldots , v_{e-1}, v_{e-1} - u_{e-1} + u_e)\\
&\vdots& \\
\!\!\!(v_2 \!-\! u_2) \odot u \oplus v \!\!\!\!&=&\!\!\! (v_1, v_2, v_2 - u_2 + u_3, \ldots , v_2 - u_2 + u_e)\\
\!\!\! (v_1 \!-\! u_1) \odot u \oplus v \!\!\!\!&=&\!\!\! u .\\
\end{array}\right.    
\end{equation}
That is, we can represent $u$ as
\begin{eqnarray}
u &=& v
+ (\Delta_{e-1}-\Delta_e) \left( \begin{array}{c} 0\\ \vdots\\ 0 \\ 0 \\ 1 \end{array} \right)
+ (\Delta_{e-2}-\Delta_{e-1}) \left( \begin{array}{c} 0\\ \vdots \\ 0 \\ 1 \\ 1 \end{array} \right) \nonumber \\
&+& \ldots 
+(\Delta_{2}-\Delta_3) \left( \begin{array}{c} 0\\ 0\\ 1\\ \vdots\\ 1 \end{array} \right)
+(\Delta_{1}-\Delta_2) \left( \begin{array}{c} 0\\ 1\\ 1\\ \vdots\\ 1 \end{array} \right)
\label{eq:representation}
\end{eqnarray}
where $\Delta_i := v_i-u_i$.

\begin{example}
For $v=(2,0,8)$ and $u=(-1,-1,2)$, we have $v-u=(3,1,6)$ so $\Delta = (6,3,1)$ after permutation.  After similarly permuting $v$ and $u$ where $v=(8,2,0)$ and $u=(2,-1,-1)$ we have $u = v + 2(0,0,1) + 3(0,1,1)$.
\end{example}

Using this fact we have the following algorithm to sample a random point from $\Gamma_{u, v}$.
\begin{algorithm}
\caption{Sampling from Tropical Line Segment}\label{alg:troplineseg}
\begin{algorithmic}
\State {\bf Input:} $u, v \in \mathbb R^e \!/\mathbb R {\bf 1}$
\State {\bf Output:} A random point $x \in \Gamma_{u, v}$.
\State Sample $\ell$ uniformly from $[\min(v-u), \max(v-u)]$.
\State Set $x:= \ell \odot u \oplus v = (\max(\ell+u_1, v_1), \ldots , \max(\ell+u_e, v_e))$. \\
\Return $x$.
\end{algorithmic}
\end{algorithm}

\begin{proposition}
We can sample a point uniformly from $\Gamma_{u, v}$ shown in \eqref{eq:troline} via Algorithm \ref{alg:troplineseg}. Sampling a random point via Algorithm \ref{alg:troplineseg} is $O(e)$.
\end{proposition}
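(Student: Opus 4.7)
The plan is to show two things: (i) the parametrization $\lambda \mapsto \lambda \odot u \oplus v$, with $\lambda$ ranging over $I := [\min(v-u),\max(v-u)]$, is an arc-length (isometric) parametrization of the tropical line segment $\Gamma_{u,v}$; and (ii) Algorithm~\ref{alg:troplineseg} can be executed in $O(e)$ arithmetic operations. Together, (i) implies that pulling the uniform distribution on $I$ through this map yields the uniform (arc-length) distribution on $\Gamma_{u,v}$.

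For (i), I would use the assumed ordering $v_1-u_1 \ge \cdots \ge v_{e-1}-u_{e-1} \ge v_e-u_e = 0$. Writing $x(\lambda)_k = \max(\lambda+u_k, v_k)$, the $k$-th coordinate equals $\lambda+u_k$ exactly when $\lambda \ge v_k-u_k$. Hence on each subinterval $[v_{i+1}-u_{i+1},\, v_i-u_i]$ of $I$, we have $x(\lambda)=(\lambda+u_1,\ldots,\lambda+u_i,\,v_{i+1},\ldots,v_e)$. Evaluating at the endpoints recovers exactly the $e$ breakpoints of $\Gamma_{u,v}$ listed in \eqref{eq:troline}, so the image of $I$ is $\Gamma_{u,v}$.

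The key (mildly nontrivial) step is the isometry. For $\lambda<\lambda'$ in the same subinterval, the difference $x(\lambda')-x(\lambda)$ has its first $i$ coordinates equal to $\lambda'-\lambda$ and its remaining coordinates equal to $0$; therefore $d_{\rm tr}(x(\lambda),x(\lambda')) = \lambda'-\lambda$. Since $\lambda\mapsto x(\lambda)$ is continuous and monotone across the breakpoints, this local identity globalizes by additivity of $d_{\rm tr}$ along the piecewise-linear curve $\Gamma_{u,v}$, giving $d_{\rm tr}(x(\lambda),x(\lambda'))=|\lambda'-\lambda|$ for all $\lambda,\lambda'\in I$. This identifies the parametrization as an isometry from $(I,|\cdot|)$ onto $(\Gamma_{u,v},d_{\rm tr})$, so the pushforward of the uniform measure on $I$ is the normalized Hausdorff (arc-length) measure on $\Gamma_{u,v}$, which is what we mean by the uniform distribution on $\Gamma_{u,v}$.

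For (ii), computing $\min(v-u)$ and $\max(v-u)$ takes $O(e)$ operations, drawing $\ell$ uniformly from an interval is $O(1)$, and assembling the $e$ coordinates of $\ell \odot u \oplus v$ requires $e$ additions and $e$ comparisons. Hence Algorithm~\ref{alg:troplineseg} runs in $O(e)$. The only real obstacle is the arc-length identity above; everything else is bookkeeping on the piecewise-linear structure already exhibited in \eqref{eq:troline}.
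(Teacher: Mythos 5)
Your argument is correct and follows essentially the same route as the paper: your arc-length identity $d_{\rm tr}(x(\lambda),x(\lambda'))=|\lambda'-\lambda|$ is exactly the paper's Lemma \ref{lm:tropln}, which the paper then channels through Proposition \ref{prop:tropln2}, the injectivity Lemma \ref{lm:1to1}, and Lemma \ref{lm:prob} to obtain the uniform density $1/d_{\rm tr}(u,v)$, while you conclude uniformity directly from the isometry as a measure-preserving pushforward (a cleaner justification, since injectivity alone would not suffice), and the $O(e)$ count is the same bookkeeping. One cosmetic slip: with the ordering $v_1-u_1\ge\cdots\ge v_e-u_e=0$, on the subinterval $[v_{i+1}-u_{i+1},\,v_i-u_i]$ it is the \emph{last} $e-i$ coordinates that equal $\lambda+u_k$ and the first $i$ that stay at $v_k$ (as in \eqref{eq:troline}), not the reverse; this relabeling does not affect your distance computation or the complexity bound.
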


\begin{lemma}\label{lm:tropln}
Suppose $u, v \in \T$ are the input of Algorithm \ref{alg:troplineseg} and $x(\ell_1)$ and $x(\ell_2) \in \T$ are outputs from Algorithm \ref{alg:troplineseg} for $\ell_1$ and $\ell_2$, respectively.
Then 
\[
d_{\rm tr}(x(\ell_1), x(\ell_2)) = |\ell_2 - \ell_1|.
\]
\end{lemma}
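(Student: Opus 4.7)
The plan is to unfold the formula $x(\ell)_i = \max(\ell + u_i, v_i)$ coordinate by coordinate and track how the difference $x(\ell_2)_i - x(\ell_1)_i$ depends on the position of $\ell_1, \ell_2$ relative to the threshold $\Delta_i := v_i - u_i$. Without loss of generality assume $\ell_1 \le \ell_2$. For a fixed coordinate $i$, the function $\ell \mapsto \max(\ell + u_i, v_i)$ is piecewise linear: constant (equal to $v_i$) for $\ell \le \Delta_i$ and equal to $\ell + u_i$ with slope $1$ for $\ell \ge \Delta_i$. So there are three cases, according to whether $[\ell_1,\ell_2]$ lies entirely below $\Delta_i$, entirely above $\Delta_i$, or straddles it. A direct calculation in each case shows
\[
0 \le x(\ell_2)_i - x(\ell_1)_i \le \ell_2 - \ell_1,
\]
since the function is monotone non-decreasing in $\ell$ with slope at most $1$.

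Next I would use the hypothesis implicit in the algorithm that $\ell_1, \ell_2 \in [\min(v-u), \max(v-u)] = [\min_k \Delta_k,\, \max_k \Delta_k]$ to pin down both the maximum and the minimum coordinate-wise differences. Pick $i_{\min}$ with $\Delta_{i_{\min}} = \min_k \Delta_k$; then $\Delta_{i_{\min}} \le \ell_1 \le \ell_2$, so in this coordinate we are in the ``both above threshold'' case and $x(\ell_2)_{i_{\min}} - x(\ell_1)_{i_{\min}} = \ell_2 - \ell_1$. Pick $i_{\max}$ with $\Delta_{i_{\max}} = \max_k \Delta_k$; then $\ell_1 \le \ell_2 \le \Delta_{i_{\max}}$, so both values collapse to $v_{i_{\max}}$ and $x(\ell_2)_{i_{\max}} - x(\ell_1)_{i_{\max}} = 0$.

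Combining these with the definition
\[
d_{\rm tr}(x(\ell_1), x(\ell_2)) = \max_i \bigl(x(\ell_2)_i - x(\ell_1)_i\bigr) - \min_i \bigl(x(\ell_2)_i - x(\ell_1)_i\bigr)
\]
gives $d_{\rm tr}(x(\ell_1), x(\ell_2)) = (\ell_2 - \ell_1) - 0 = |\ell_2 - \ell_1|$, as desired. Note we are free to compute $d_{\rm tr}$ using $x(\ell_2) - x(\ell_1)$ directly (rather than $x(\ell_1) - x(\ell_2)$) because the tropical metric is invariant under swapping the two arguments.

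The only subtle point, and arguably the ``main obstacle,'' is recognizing that the hypothesis $\ell_1, \ell_2 \in [\min(v-u), \max(v-u)]$ is precisely what is needed to guarantee that \emph{some} coordinate sits in the flat regime and \emph{some} coordinate sits in the slope-$1$ regime for the whole interval $[\ell_1, \ell_2]$; without this, one would only get the inequality $d_{\rm tr}(x(\ell_1), x(\ell_2)) \le |\ell_2 - \ell_1|$ rather than equality. Everything else is a bookkeeping exercise on the piecewise linear structure of $\max(\ell + u_i, v_i)$.
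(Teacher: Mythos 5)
Your proof is correct and follows essentially the same route as the paper's: both compute the coordinate-wise differences $x(\ell_2)_i - x(\ell_1)_i$ (the paper via the sorted representation of $x(\ell)$ analogous to Equation \eqref{eq:representation}, you via the piecewise-linear view of $\ell \mapsto \max(\ell+u_i,v_i)$) and conclude that the maximum of these differences is $\ell_2-\ell_1$ while the minimum is $0$. Your explicit use of the range $[\min(v-u),\max(v-u)]$ to exhibit coordinates attaining $0$ and $\ell_2-\ell_1$ makes precise a point the paper leaves implicit in its sorted coordinate ordering.
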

\begin{proof}
Without loss of generality, we assume that $0 = (v_e - u_e) \leq (v_{e-1} - u_{e-1}) \leq \ldots \leq (v_1 - u_1)$ after permuting coordinates of $v - u$.
Similar to Equation \eqref{eq:representation}, $x$ for $\ell$ can be represented as
\begin{eqnarray}
x(\ell) &=& v
+ (\Delta_{e-1}-\Delta_e) \left( \begin{array}{c} 0\\ \vdots\\ 0 \\ 0 \\ 1 \end{array} \right)
+ (\Delta_{e-2}-\Delta_{e-1}) \left( \begin{array}{c} 0\\ \vdots \\ 0 \\ 1 \\ 1 \end{array} \right) \nonumber \\
&+& \ldots 
+(\Delta_{j}-\Delta_{j+1}) \left( \begin{array}{c} 0\\ \vdots\\ 0\\ 0\\ 1\\ \vdots\\ 1 \end{array} \right)
+(\ell-\Delta_j) \left( \begin{array}{c} 0\\ \vdots\\ 0\\ 1\\ 1\\ \vdots\\ 1 \end{array} \right) , \nonumber
\end{eqnarray}
where $\ell \in (v_j - u_j, v_{j-1} - u_{j-1}]$ and there are $j-1$ zeros in the last column vector.
Suppose $\ell_1 < \ell_2$ with $\ell_1 \in (v_{j_1} - u_{j_1}, v_{j_1-1} - u_{j_1-1}]$ and $\ell_2 \in (v_{j_2} - u_{j_2}, v_{j_2-1} - u_{j_2-1}]$ for some $j_1 \geq j_2$.
Then we have
\begin{equation}
x(\ell_2) - x(\ell_1) = (0, \ldots, 0, \ell_2-\Delta_{j_2}, \ldots, \ell_2 - \Delta_{j_1-1}, \ell_2 - \ell_1, \ldots, \ell_2 - \ell_1),\nonumber
\end{equation}
whose max and min are $\ell_2 - \ell_1$ and $0$.
\end{proof}

\begin{proposition}\label{prop:tropln2}
Suppose $u, v \in \T$ are the input of Algorithm \ref{alg:troplineseg} and $x \in \T$ is an output from Algorithm \ref{alg:troplineseg}.  Then 
\[
d_{\rm tr}(v, x) = \ell - \min(v - u).
\]

\end{proposition}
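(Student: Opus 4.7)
The plan is to reduce the proposition to Lemma \ref{lm:tropln}, which already computes the tropical distance between any two outputs $x(\ell_1)$ and $x(\ell_2)$ of Algorithm \ref{alg:troplineseg} as $|\ell_2 - \ell_1|$. The only extra ingredient needed is to identify which value of the parameter $\ell$ reproduces the endpoint $v$ itself; once that is in hand, the formula for $d_{\rm tr}(v,x)$ drops out immediately.

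The key intermediate step is therefore to verify that $v = x(\ell_0)$ where $\ell_0 := \min(v - u)$. This is a short componentwise check from the definition in Algorithm \ref{alg:troplineseg}: $x(\ell)_i = \max(\ell + u_i, v_i)$. Setting $\ell = \min_j(v_j - u_j)$, we have $\ell + u_i \leq (v_i - u_i) + u_i = v_i$ for every $i$, so the maximum is attained by $v_i$ and hence $x(\ell_0) = v$. Since the sampled $\ell$ satisfies $\ell \geq \min(v-u) = \ell_0$, applying Lemma \ref{lm:tropln} to the pair $(\ell_0, \ell)$ yields
\[
d_{\rm tr}(v, x) \;=\; d_{\rm tr}(x(\ell_0), x(\ell)) \;=\; |\ell - \ell_0| \;=\; \ell - \min(v - u),
\]
which is precisely the claimed identity.

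I do not anticipate a real obstacle here: the substantive work was done in Lemma \ref{lm:tropln}, and the remaining identification $v = x(\min(v-u))$ is essentially a boundary case of Equation \eqref{eq:troline}. If one prefers a more self-contained argument bypassing Lemma \ref{lm:tropln}, the same conclusion follows from directly inspecting the identity $x_i - v_i = \max(\ell - (v_i - u_i), 0)$: the maximum over $i$ equals $\ell - \min(v-u)$ (attained at the coordinate minimizing $v_i - u_i$, using $\ell \geq \min(v-u)$), and the minimum over $i$ equals $0$ (attained at the coordinate maximizing $v_i - u_i$, using $\ell \leq \max(v-u)$), so their difference gives the tropical distance.
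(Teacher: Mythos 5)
Your proposal is correct and follows essentially the same route as the paper: the paper's proof is exactly ``apply Lemma \ref{lm:tropln} with $x(\ell_1)=v$, i.e., $\ell_1=\min(v-u)$,'' and you simply make explicit the componentwise check that $x(\min(v-u))=v$, which the paper takes for granted. No gap; the added verification and the alternative direct computation are fine but not a different method.
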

\begin{proof}
Use Lemma \ref{lm:tropln} for $x(\ell_1)=v$, i.e., $\ell_1=\min(v - u)$.

\end{proof}

\begin{lemma}\label{lm:1to1}
For fixed $u, v \in \mathbb R^e \!/\mathbb R {\bf 1}$, let a map $G_{u, v} : [\min(v-u), \max(v-u)] \to \mathbb R^e \!/\mathbb R {\bf 1}$ such that
\[
G_{u, v} (\ell) =  \ell \odot u \oplus v = (\max(\ell+u_1, v_1), \ldots , \max(\ell+u_e, v_e)).
\]
Then $G_{u, v}$ is a one-to-one map.  
\end{lemma}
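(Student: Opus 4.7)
The plan is to piggyback on Lemma \ref{lm:tropln}, which already computes the tropical distance between two outputs of Algorithm \ref{alg:troplineseg} in terms of the corresponding scalar parameters. Concretely, recall that if $x(\ell_1)$ and $x(\ell_2)$ denote the outputs of $G_{u,v}$ at $\ell_1$ and $\ell_2$, then Lemma \ref{lm:tropln} asserts
\[
d_{\rm tr}(x(\ell_1), x(\ell_2)) \;=\; |\ell_2 - \ell_1|.
\]
Since $d_{\rm tr}$ is a genuine metric on the tropical projective torus $\mathbb R^e\!/\mathbb R\mathbf 1$ (the generalized Hilbert projective metric of Definition \ref{eq:tropmetric}), it separates points: $d_{\rm tr}(p,q)=0$ if and only if $p = q$ in $\mathbb R^e\!/\mathbb R\mathbf 1$.

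Thus, to prove injectivity, I would suppose $\ell_1,\ell_2 \in [\min(v-u),\max(v-u)]$ with $G_{u,v}(\ell_1) = G_{u,v}(\ell_2)$ in $\mathbb R^e\!/\mathbb R\mathbf 1$. Then the tropical distance between the two images vanishes, and by Lemma \ref{lm:tropln} this gives $|\ell_2 - \ell_1| = 0$, i.e.\ $\ell_1 = \ell_2$. This is exactly the one-to-one property claimed.

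The only subtle point — and the step I would flag as worth checking explicitly — is that the parameter range $[\min(v-u),\max(v-u)]$ matches the setup of Lemma \ref{lm:tropln}. In the proof of Lemma \ref{lm:tropln}, the parameter $\ell$ is assumed to live in one of the subintervals $(v_j - u_j, v_{j-1}-u_{j-1}]$ for some $j$, and these subintervals partition $[\min(v-u),\max(v-u)]$ (after permuting coordinates so the differences are ordered) up to endpoints. At the endpoints $\ell = \min(v-u)$ and $\ell = \max(v-u)$, one simply checks by direct computation that $G_{u,v}(\ell) = v$ and $G_{u,v}(\ell) = u$, respectively, which are distinct from each other and from the interior images, so the distance formula extends continuously and the argument above applies to the full closed interval.

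I do not anticipate any real obstacle: once Lemma \ref{lm:tropln} is in hand, Lemma \ref{lm:1to1} is essentially a one-line corollary via the fact that a metric separates points. The main task is simply to record the logical chain cleanly and to verify the endpoint cases so that the parameter interval is covered in its entirety.
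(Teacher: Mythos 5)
Your proof is correct and follows essentially the same route as the paper: both arguments reduce injectivity to the distance formula established in Lemma~\ref{lm:tropln} together with the fact that $d_{\rm tr}$ is a metric, the only difference being that the paper invokes Proposition~\ref{prop:tropln2} (distinct distances from the fixed endpoint $v$ for $\ell_1 \neq \ell_2$) whereas you apply Lemma~\ref{lm:tropln} directly to the pair $x(\ell_1), x(\ell_2)$ — an immaterial variation, since Proposition~\ref{prop:tropln2} is itself the special case $x(\ell_1)=v$ of that lemma. Your extra check of the endpoint $\ell=\min(v-u)$ is a reasonable precaution but not a substantive departure.
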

\begin{proof}
Suppose $\ell_1, \, \ell_2 \in [\min(v-u), \max(v-u)]$ such that $\ell_1 \not = \ell_2$.  Then $G_{u, v}(\ell_1) = \ell_1 \odot u \oplus v, \, G_{u, v}(\ell_2) =\ell_2 \odot u \oplus v \in \Gamma_{u, v}$ by Definition \ref{def:polytope} and by Proposition \ref{prop:tropln2}  we have
\[
d_{\rm tr}(v, G_{u, v}(\ell_1)) = \ell_1 - \min(v - u),
\]
and
\[
d_{\rm tr}(v, G_{u, v}(\ell_2)) = \ell_2 - \min(v - u).
\]
Therefore 
\[
d_{\rm tr}(v, G_{u, v}(\ell_1)) \not = d_{\rm tr}(v, G_{u, v}(\ell_2)).
\]
Thus, since $d_{\rm tr}$ is a metric, $d_{\rm tr}(x, y) = 0$ if and only if $x = y$, and since $G_{u, v}(\ell_1), \, G_{u, v}(\ell_2) \in \Gamma_{u, v}$, 
\[
G_{u, v}(\ell_1) \not = G_{u, v}(\ell_2).
\]
\end{proof}

\begin{lemma}\label{lm:prob}
For fixed $u, v \in \mathbb R^e \!/\mathbb R {\bf 1}$, the probability to sample a point $x \in \Gamma_{u, v} \subset \mathbb R^e \!/\mathbb R {\bf 1}$ via Algorithm \ref{alg:troplineseg} is uniformly distributed with the probability density function 
\[
f(x) = \frac{1}{d_{tr}(u,v)}.
\]
\end{lemma}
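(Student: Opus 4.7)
The plan is to recognize the sampling scheme in Algorithm \ref{alg:troplineseg} as the pushforward of a uniform distribution on an interval under an isometric parameterization of the tropical line segment. Concretely, $\ell$ is drawn uniformly from $[\min(v-u),\max(v-u)]$, which is an interval of Euclidean length
\[
\max(v-u)-\min(v-u) \;=\; d_{\mathrm{tr}}(u,v),
\]
by Definition \ref{eq:tropmetric}. So the density of $\ell$ with respect to Lebesgue measure on $\RR$ is the constant $1/d_{\mathrm{tr}}(u,v)$.

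Next I would invoke the geometric properties of $G_{u,v}$ already established. Lemma \ref{lm:1to1} tells us $G_{u,v}\colon [\min(v-u),\max(v-u)] \to \Gamma_{u,v}$ is a bijection, and Lemma \ref{lm:tropln} tells us that for any $\ell_1,\ell_2$ in this interval,
\[
d_{\mathrm{tr}}\bigl(G_{u,v}(\ell_1),\,G_{u,v}(\ell_2)\bigr) \;=\; |\ell_2-\ell_1|.
\]
Thus $G_{u,v}$ is an isometry between the interval $[\min(v-u),\max(v-u)]$ with its usual Euclidean metric and the tropical line segment $\Gamma_{u,v}$ with the induced tropical metric. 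Under this identification, the natural 1-dimensional ``arc-length'' (Hausdorff) measure on $\Gamma_{u,v}$ coincides with the pushforward of Lebesgue measure on the interval.

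Since uniform sampling of $\ell$ has constant density $1/d_{\mathrm{tr}}(u,v)$, the pushforward law of $x=G_{u,v}(\ell)$ is uniform on $\Gamma_{u,v}$ with respect to this arc-length measure, and the density is the same constant
\[
f(x) \;=\; \frac{1}{d_{\mathrm{tr}}(u,v)},
\]
proving the claim. The only real subtlety, and the step I would be careful about, is making the notion of ``uniform distribution on $\Gamma_{u,v}$'' precise: the tropical line segment is a piecewise-linear curve in $\mathbb R^e/\mathbb R {\bf 1}$, so the appropriate reference measure is the one-dimensional Hausdorff measure induced by $d_{\mathrm{tr}}$, and one needs the isometry from Lemma \ref{lm:tropln} to identify this with Lebesgue measure on the parameter interval; once that identification is in place, the uniformity of the pushforward is immediate from the standard change-of-variables argument.
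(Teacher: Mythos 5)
Your proposal is correct and follows the same basic strategy as the paper: view $x$ as the pushforward of the uniform variable $\ell$ on $[\min(v-u),\max(v-u)]$ under the map $G_{u,v}$, and note that the interval length equals $d_{\rm tr}(u,v)$. The difference is in what you use to justify uniformity of the pushforward. The paper's proof cites only Lemma \ref{lm:1to1} (injectivity of $G_{u,v}$) and then asserts that $x$ is uniform with density $1/d_{\rm tr}(u,v)$; strictly speaking, injectivity alone does not give this, since an injective but non-length-preserving parameterization of a curve would distort the density. You additionally invoke Lemma \ref{lm:tropln}, i.e.\ $d_{\rm tr}(G_{u,v}(\ell_1),G_{u,v}(\ell_2))=|\ell_2-\ell_1|$, so that $G_{u,v}$ is an isometry onto $\Gamma_{u,v}$ and Lebesgue measure on the parameter interval is identified with the one-dimensional tropical arc-length (Hausdorff) measure on the segment; this is exactly the ingredient needed to make "uniform on $\Gamma_{u,v}$ with density $1/d_{\rm tr}(u,v)$" precise. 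So your argument is a more careful version of the paper's, making explicit the reference measure and the role of the isometry that the paper leaves implicit.
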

\begin{proof}
Suppose $x \in \Gamma_{u, v}$ is sampled via Algorithm \ref{alg:troplineseg}.  From Algorithm \ref{alg:troplineseg}, $\ell \in \mathbb{R}$ is sampled uniformly from $[\min(v-u), \max(v-u)]$.  By Lemma \ref{lm:1to1}, the map 
\[
G_{u, v} (\ell) =  \ell \odot u \oplus v = (\max(\ell+u_1, v_1), \ldots , \max(\ell+u_e, v_e))
\]
in Algorithm \ref{alg:troplineseg} is a one-to-one map.  Thus, $G_{u, v(\ell)}$ is sampled uniformly with the probability density function 
\[
f(x) = \frac{1}{[\max(v-u)-\min(v-u)]}=\frac{1}{d_{tr}(u,v)}.
\]
\end{proof}


We now show that any line segment, $\Gamma_{u,v}$, may be extended without adding any break points to form a new line segment $\Gamma_{u',v'}$ such that $\Gamma_{u,v} \subset \Gamma_{u',v'}$. We will use this idea in developing Algorithm~\ref{alg:HAR_vert3} in Subsection~\ref{sec:vert_HAR_samp}.

\begin{theorem}\label{thm:ext}
Given a tropical line segment $\Gamma_{u,v} \in \mathbb R^e \!/\mathbb R {\bf 1}$ with endpoints $u$ and $v$ and no more than $e-2$ break points, $\mathbf{b}$, the line segment may be extended from the endpoints without increasing the number of break points to form a new line segment $\Gamma'_{u^{'},v^{'}}$ with new end points $u^{'}$ and $v^{'}$ if:
\begin{enumerate}
\item The proposed new endpoints $u^{'}$ and $v^{'}$ exhibit a change in the same coordinates of the vector $d_i=i-b_i$ which shows the coordinate difference between endpoint $i\in\{u,v\}$ and the break point immediately prior to the original endpoints, defined as $b_i$.
\item For each new endpoint, $i'$, each coordinate of the difference vector $d_{i'}=i'-b_i$ is zero or some value $\delta$.  That is $d_{i'}^j\in\{0,\delta\}$.
\end{enumerate}
\end{theorem}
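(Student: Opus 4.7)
The plan is to use the parametric representation of a tropical line segment from Equations \eqref{eq:troline} and \eqref{eq:representation} to identify the break points as the parameter values $\ell = \Delta_j$ at which $\Delta_j \neq \Delta_{j+1}$, and then exploit that the direction of the last linear piece at each endpoint is a fixed $0/1$ indicator vector whose support encodes exactly which coordinates of $d_u = u - b_u$ (respectively $d_v$) are nonzero. With this in hand, conditions (1) and (2) of the theorem are exactly what is needed for $u'$ to lie on the straight extension of the last piece past $u$, and similarly for $v'$ past $v$.

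I would carry out the argument in three steps. First, after a coordinate permutation so that $0 = \Delta_e \le \Delta_{e-1} \le \cdots \le \Delta_1$, use Equation \eqref{eq:representation} to identify the at most $e-2$ interior break points and to write $u - b_u = \alpha (0,\ldots,0,1,\ldots,1)$, where $\alpha = \Delta_k - \Delta_{k+1} > 0$ for the appropriate index $k$ at which the last gap sits; the support of this vector is exactly the coordinates in which $d_u$ is nonzero. Conditions (1)--(2) then force $u' - b_u = \delta (0,\ldots,0,1,\ldots,1)$ with the same support pattern and some scalar $\delta > \alpha$. Second, I would show $\Gamma_{u,v} \subseteq \Gamma_{u',v}$ by computing $\Delta' := v - u'$, normalising it under $\T$, and observing that (a) its sorted coordinate vector differs from that of $\Delta$ only in the extreme gap at $j=1$, and (b) an explicit parameter value plugged into Algorithm \ref{alg:troplineseg} with inputs $(u',v)$ recovers $u$ modulo ${\bf 1}$. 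Since $v \in \Gamma_{u',v}$ trivially, Definition \ref{def:polytope} then yields the inclusion, and (a) says that the interior break points of $\Gamma_{u',v}$ are in bijection with those of $\Gamma_{u,v}$: $u$ has simply become an interior, non-break point of a longer final piece. Third, I would apply the symmetric argument at $v$ to get $\Gamma_{u',v} \subseteq \Gamma_{u',v'}$ with the same break-point count, and then compose the two inclusions.

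The hardest step should be step two: normalising $\Delta'$ under $\T$ does not simply enlarge one entry of $\Delta$ but re-centres the entire vector, so tracking which sorted entries of $\Delta'$ correspond to which break points of the original segment takes some care. I plan to organise this around the invariant ``multiset of consecutive gaps $\{\Delta_j - \Delta_{j+1}\}_{j=1}^{e-1}$'': the extension at $u$ alters only the gap at $j=1$, leaving all interior gaps intact, and the symmetric claim at $v$ alters only the gap at $j=e-1$. This makes it immediate that interior break points are neither created nor collapsed under the extension, which is exactly the content of the theorem.
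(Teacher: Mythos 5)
Your proposal is correct, and it reaches the same geometric conclusion as the paper but by a noticeably more explicit route. The paper's own proof is qualitative: it decomposes $\Gamma_{u,v}$ into the break-point-free pieces between consecutive break points, observes that a break-point-free piece is characterized by its difference vector having entries in $\{0,\delta\}$, and then argues directly that an extension $\Gamma'_{u',b_u}$ satisfying conditions (1)--(2) (plus containing $u$) is again break-point-free, so the whole segment extends with the same break points; the containment $\Gamma_{u,v}\subseteq\Gamma_{u',v'}$ and the exact correspondence of break points are asserted rather than computed. You instead work from the parametrization in \eqref{eq:troline} and \eqref{eq:representation}: after sorting $\Delta=v-u$, the terminal piece at $u$ has direction a $0/1$ indicator vector, conditions (1)--(2) force $u'-b_u$ to be a positive multiple of that same indicator, and then you verify the inclusion by exhibiting the parameter value of $u$ inside $\Gamma_{u',v}$ (together with tropical convexity of $\Gamma_{u',v}$, via Definition~\ref{def:polytope}) and track break points through the multiset of consecutive gaps $\Delta_j-\Delta_{j+1}$, noting that the extension only enlarges the extreme gap. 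What your approach buys is precisely the two steps the paper glosses over: an actual proof that the old segment sits inside the new one, and a concrete invariant (the interior gap multiset) certifying that no break point is created or merged; the cost is the bookkeeping you flag yourself, namely re-centring $v-u'$ modulo $\mathbb{R}\mathbf{1}$ and handling ties among the $\Delta_i$ (when the maximum of $v-u$ is attained more than once, the terminal direction is the indicator of the complement of the argmax set, so your ``appropriate index $k$'' needs that reading). With that minor care about ties, your argument is complete and, if anything, tighter than the published one.
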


\begin{proof}
To extend a tropical line segment, $\Gamma_{u,v}$, without increasing the number of break points, $|\mathbf{b}|$, we only need consider an endpoint, $u$, and the break point immediately prior to $u$, which we call $b_u \in \mathbf{b}$. In cases where $|\mathbf{b}|=0$, $b_{u}=v$, the other endpoint.

Critically, a line segment $\Gamma_{u,v}$ where $|\mathbf{b}|>0$, is made up of a series of tropical line segments between each $b\in\mathbf{b}$ and its adjacent $b'\in \mathbf{b}$, where $b\ne b'$, or an endpoint, $u$, and its adjacent break point, $b_u$. This means that each tropical line segment, $\Gamma'_{b,b'}$, $\Gamma'_{u,b_u}$, and $\Gamma'_{v,b_v}$, comprising $\Gamma_{u,v}$ have no break points.

Consider the tropical line segment $\Gamma'_{u,b_u}$ and the vector $d$ where $d=u-b_u$. For a line segment with no break points, each coordinate, $d_i \in d$ is either zero or some value $\delta$. Specifically, $d_i \in \{0, \delta \}$. This means that any coordinates that change values when moving from $b_u$ to $u$, change by the same amount, $\delta$.  Otherwise a break point must exist due to the piecewise nature of a line segment in the tropical projective torus.

Now assume that any extension of $\Gamma'_{u,b_u}$ forming a new line segment $\Gamma'_{u^{'},b_u}$ containing the original endpoint $u$, must possess a break point. However if the following conditions exist, $\Gamma'_{u^{'},b_u}$ cannot contain a break point: 

\begin{enumerate}
    \item This new line segment contains $u$,
    \item $d_{i}^{'}\in \{0,\delta'\}$ where $d_{i}^{'} \in d^{'}$ and $d^{'}=u^{'}-b_u$,
    \item The non-zero values of $d^{'}$ are in the same coordinate positions as $d=u-b_u$.
\end{enumerate}
 Since no break point exists, $\Gamma_{u,b_u}$ may be extended without adding a break point.  Since $\Gamma_{u,b_u}$ can be extended without a break point, the line segment $\Gamma_{u,v}$ may also be extended without adding a break point.  By induction, as long as any extension follows the three conditions above, $\Gamma_{u,v}$ may be extended from its endpoints $u$ and $v$ infinitely without additional breakpoints, thus defining the tropical line containing $\Gamma_{u,v}$. 
\end{proof}

\begin{example}\label{ex:break}
Consider a line segment $\Gamma_{u,v} \in \mathbb R^3 \!/\mathbb R {\bf 1}$ where $u=(0,0,0)$ and $v=(0,3,1)$. Using Equation~\eqref{eq:troline}, we define $\Gamma_{u,v}$ with the following points: $(0,0,0); (0,2,0); (0,3,1)$ where $b_u = b_v = (0,2,0)$ as defined in Theorem~\ref{thm:ext}.  Note that $\Gamma_{u,v}$ is composed of two line segments, which is the maximum allowed for a line segment in $\mathbb R^3 \!/\mathbb R {\bf 1}$~\citep{joswigBook}.  This translates into $max(|\bf{b}|)=1$. We conclude then that any additional break points would define a new line segment such that $\Gamma_{u,v} \not \subset \Gamma_{u',v'}$.  But because $\Gamma_{u,v}$ is a tropical line segment it must be part of a tropical line.  Since $|\bf{b}|=1$, the tropical line must only contain the set of break points $\bf{b}$.  Therefore, any subset of the tropical line that includes $\Gamma_{u,v}$ can have no more than the original set of break points, $\bf{b}$, as well.  
\end{example}

Rules governing the resultant line segment extension in Example~\ref{ex:break} can be extended to any dimension.  Specifically, a line segment $\Gamma_{u,v} \in \mathbb R^e \!/\mathbb R {\bf 1}$ may only contain $e-1$ line segments, translating into $e-2$ break points, which does not include the end points~\citep{joswigBook}.

\begin{example}\label{ex:no_break}
Now consider a line segment $\Gamma_{u,v} \in \mathbb R^3 \!/\mathbb R {\bf 1}$ where $u=(0,0,0)$ and $v=(0,2,0)$. Using Equation~\eqref{eq:troline}, we see that $\Gamma_{u,v}$ possesses no break points which allows it to potentially be part of several different line segments since break points may be added. If we wish to extend $\Gamma_{u,v}$ in either or both directions without adding break points we need only consider the coordinate difference vector between $u$ and $v$.  If we wish to extend the line segment past $u$, we consider $d_u=u-v$ which in this case yields $d_u=(0,-2,0)$.  To extend the line segment without a break point we can do this by only changing the second coordinate since $d_u^2\not = 0$. For example $u'=b_u+\lambda*d_u=(0,-2\lambda,0)$ where $\lambda\in\mathbb{R}_{>1}$. If we then utilize Equation~\eqref{eq:troline} we find that $\Gamma_{u,v} \subset \Gamma_{u',v}$ where $\Gamma_{u',v}$ is an extension of $\Gamma_{u,v}$ without any additional break points.
\end{example}

\begin{figure}[H]
    \centering
    \includegraphics[width=0.44\textwidth]{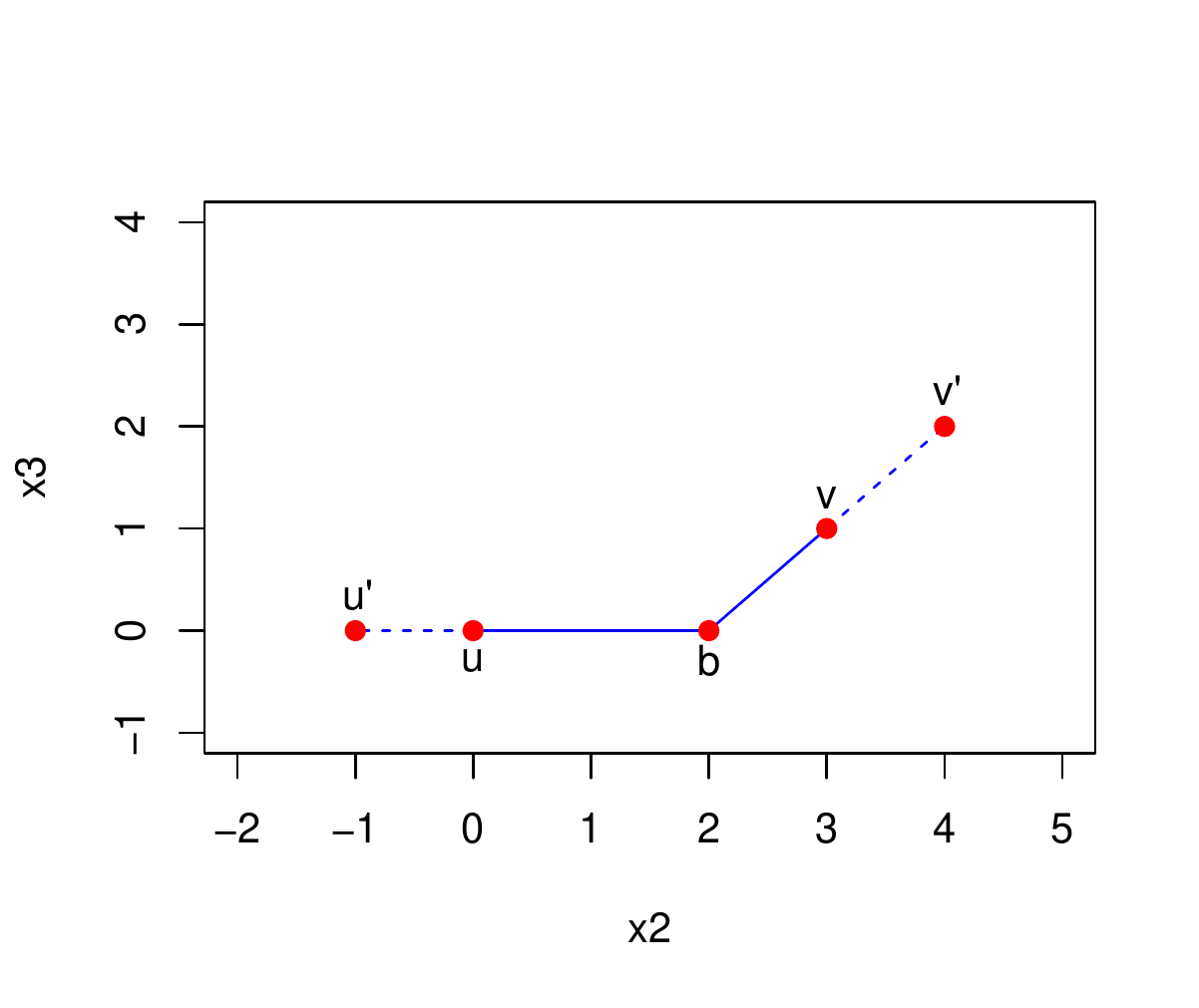}~
    \includegraphics[width=0.44\textwidth]{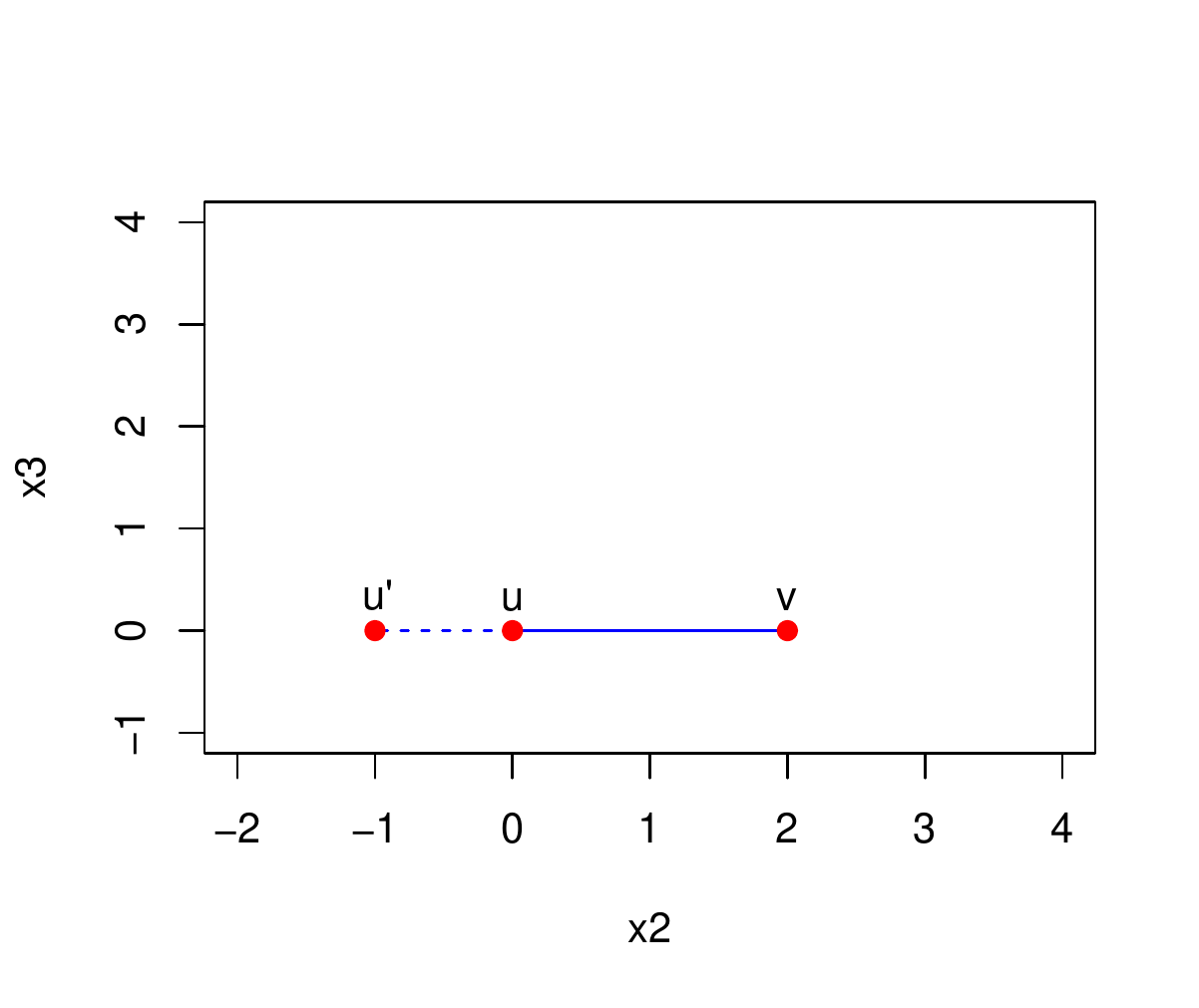}
    \caption{Possible extension results from Examples~\ref{ex:break} (left) and~\ref{ex:no_break} (right). In the both figures, points $u$ and $v$ are the end points for the associated line segment $\Gamma_{u,v}$ with the line segment defined by the solid blue line.  In the left figure, $b$ is the break point for the original line segment.  In both figures $u'$ and $v'$ (left figure only) represent the new end points after extending the line segments.}
    \label{fig:Ext_Ex}
\end{figure}

\subsection{Sampling from a Classical Polytope via Hit and Run (HAR) Algorithm}

In this section we illustrate the Hit-and Run (HAR) algorithm developed by~\cite{Zabinsky2013}. The HAR algorithm is a MCMC method to sample a point from a convex set, $S$ and consists of two main steps: 1) build a bidirectional line segment (or line) emanating from a point in a set $S$; 2) randomly select a point on that line ~\citep{Zabinsky2013}. Algorithm~\ref{alg:HAR1} defines the HAR MCMC.

This process repeats a number of times in order to ensure the starting point and ending point are decorrelated.  To ensure points are only selected from a particular set, $S$, it is often necessary to employ some variation of rejection sampling as we will see later.  The remainder of this paper will illustrate variations on Algorithm~\ref{alg:HAR1} for use over the tropical projective torus $\mathbb{R}^e/{\mathbb R} {\bf 1}$.

\begin{algorithm}[
H]
\caption{Sampling via HAR algorithm from $S$}\label{alg:HAR1}
\begin{algorithmic}
\State {\bf Input:} Initial point $x_0 \in S$ and maximum iteration $I \geq 1$.
\State {\bf Output:} A random point $x \in S$.
\State Set $k = 0$.
\For{$k= 0, \ldots , I-1$,}
\State Generate a random direction $D^k$ uniformly distributed over the surface of a unit hypersphere centered around $x_k$.
\State Generate a random point $x_{k+1}$ from a line $L_k:= \{y \in S: y = x_k + \lambda, \lambda \in D^k\}$.
\EndFor \\
\Return $x := x_{I}$.
\end{algorithmic}
\end{algorithm}

\subsection{Sampling from a Tropical Polytope}
In this section we introduce a HAR sampler for use on a tropical polytope, a tropical convex hull of finitely many points over the tropical projective torus $\mathbb{R}^e/{\mathbb R} {\bf 1}$.  Sampling points from a tropical polytope, $\mathcal{P}$, begins by defining an initial point $x_0\in \mathcal{P}$ then using variations of Algorithm~\ref{alg:HAR1} to find other points in $\mathcal{P}$.  Because a sampled point, $x$ may not fall inside of $\mathcal{P}$,  we accept or reject, $x$ by evaluating its tropical projection, $\pi(x)$ onto $\mathcal{P}$ as shown in Definition \ref{def:proj}. If $d_{tr}(x,\pi(x))=0$, then $x\in \mathcal{P}$ and we accept the proposed point. The algorithms proposed in this section leverage what we call {\em vertex HAR sampling} which we describe in the next section.  

\subsubsection{Vertex HAR Sampling with Tropical Line Extensions}\label{sec:vert_HAR_samp}

Recall that a HAR sampler applied to a classical convex hull in $\mathbb{R}^e$ iteratively samples from a line between an initial point and a point on the boundary of the convex hull.  This version of a HAR sampler from a tropical convex hull simply mimics the HAR sampler from a classical convex hull over Euclidean space, namely, it iteratively samples from a tropical line segment between an initial point and a point on the boundary of the tropical convex hull.   
Intuitively a tropical line segment between two vertices defines a two dimensional face of the tropical polytope $\mathcal{P}$ and $s-1 \geq \nu \geq 2$ vertices define a $\nu$ dimensional face of $\mathcal{P}$.  Therefore if we want to sample from a $\nu$ dimensional face of $\mathcal{P}$, we can apply the HAR sampler on $\nu$ many vertices of $\mathcal{P}$.    Using a sampled point from a $\nu$ dimensional face of $\mathcal{P}$ and the initial point, we can move to the next point using the HAR sampler.  Specifically, this algorithm leverages the vertex set $\{v^1,...,v^s \}$ of the tropical polytope, $\mathcal{P}$ by sampling points from a tropical line segment between the initial point and a point generated between $\nu$ randomly selected vertices.  Algorithm~\ref{alg:HAR_vert} illustrates this in detail.   

\begin{algorithm}[H]
\caption{Vertex HAR Sampling from $\mathcal{P}$ with $\nu  = 2$}\label{alg:HAR_vert} 
\begin{algorithmic}
\State {\bf Input:} Tropical polytope $\mathcal{P}:=\tconv(v^1, \ldots, v^s)$ and an initial point $x_0 \in \mathcal{P}$ and maximum iteration $I \geq 1$.
\State {\bf Output:} A random point $x \in \mathcal{P}$.
\State Set $k = 0$.
\For{$k= 0, \ldots , I-1$,}
\State Randomly select $v^{i_1}_k$ and $v^{i_2}_k$ such that $v^{i_1}_k,v^{i_2}_k \in \{v^1,...,v^s \}$ the vertex set of $\mathcal{P}$ where $i_1, i_2 \in \{1,\ldots, s\}$. 
\State Generate a random point $v$ from a tropical line segment $\Gamma^k_{v^{i_1}_k, v^{i_2}_k}$ using Algorithm \ref{eq:troline}.
\State Generate a random point $x_{k+1}$ from a tropical line segment $\Gamma^k_{x_0, v}$ using Algorithm \ref{eq:troline}.
\EndFor \\
\Return $x := x_{I}$.
\end{algorithmic}
\end{algorithm}


We use Algorithm~\ref{alg:HAR_vert} to run two experiments consisting of $1,000$ and $10,000$ samples, respectively, taken from the tropical polytope defined by the vertices $(0,0,0)$, $(0,3,1)$ and $(0,2,5)$, using a maximum iteration value $I=50$.  The results are shown in Figure~\ref{fig:MOD}.  

\begin{figure}[H]
    \centering
    \includegraphics[width=0.44\textwidth]{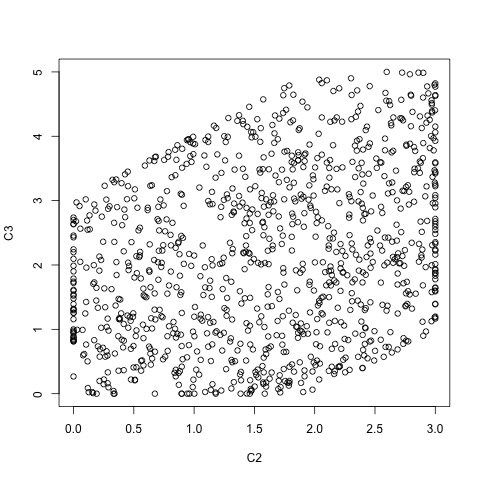}~
    \includegraphics[width=0.44\textwidth]{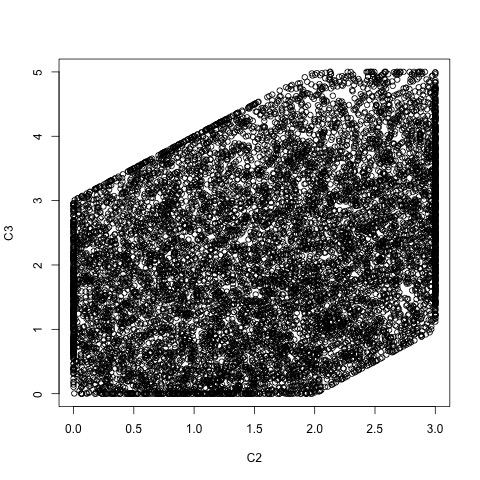}
    \caption{The result from an initial experiment for Algorithm \ref{alg:HAR_vert} with maximum iteration value $I=50$.  (Left) Results taken from $1,000$ samples. (Right) Results taken from $10,000$ samples.}
    \label{fig:MOD}
\end{figure}

 Algorithm~\ref{alg:HAR_vert} seems to fill the tropical space of the polytope $\mathcal{P}$ well though there appears to be some bias towards the edges of $\mathcal{P}$.  Repeated experiments suggest that sampling more often occurs in the lower portions $\mathcal{P}$.  Figure~\ref{fig:MOD} shows this characteristic in the fact that sampled points are sparser in the top right portion of $\mathcal{P}$ than in the rest of the polytope.

\begin{remark}
We can generalize the above $2$-vertex algorithm described in Algorithm~\ref{alg:HAR_vert} to $\nu$-vertex algorithm by repeating the uniform sampling from the line segments, which is described in Algorithm~\ref{alg:HAR_vert2}. That is, after we obtain the random point $x$ between $v^1$ and $v^2$, we next sample from the line segment connecting $x$ and $v^3$, where $v^3$ is another vertex.
\end{remark}

\begin{algorithm}[H]
\caption{Vertex HAR Sampling from $\mathcal{P}$ with $e-1 \geq \nu \geq 2$}\label{alg:HAR_vert2} 
\begin{algorithmic}
\State {\bf Input:} Tropical polytope $\mathcal{P}:=\tconv(v^1, \ldots, v^s)$ and an initial point $x_0 \in \mathcal{P}$ and maximum iteration $I \geq 1$.  The carinality value, $\nu$, of a subset of vertices to be chosen such that $s \geq \nu \geq 2$.
\State {\bf Output:} A random point $x \in \mathcal{P}$.
\State Set $k = 0$.
\For{$k= 0, \ldots , I-1$,}
\State Randomly select $v^{i_1}_k, \ldots , v^{i_{\nu}}_k$ such that $v^{i_1}_k, \ldots , v^{i_{\nu}}_k \in \{v^1,...,v^s \}$ the vertex set of $\mathcal{P}$.  
\State Generate a random point $v_0$ from a tropical line segment $\Gamma^k_{v^{i_1}_k, v^{i_2}_k}$, using Algorithm \ref{eq:troline}.
\For{$i =i_3, \ldots , i_{\nu}$,}
\State Generate a random point $v$ from a tropical line segment $\Gamma^k_{v_0, v^{i}_k}$, using Algorithm \ref{eq:troline}.
\State Set $v_0 = v$
\EndFor 
\State Generate a random point $x_{k+1}$ from a tropical line segment $\Gamma^k_{x_0, v_0}$ using Algorithm \ref{eq:troline}.
\EndFor \\
\Return $x := x_{I}$.
\end{algorithmic}
\end{algorithm}


We note that Figure~\ref{fig:MOD} indicates that while Algorithm~\ref{alg:HAR_vert} fills the space in the polytope, it biases on the edges of the polytope.  To combat this we make two modifications to Algorithm~\ref{alg:HAR_vert2}.  In its current form, sampling is limited to the line segment between a point, $v$, on the edge of the polytope, $\mathcal{P}$ and an initial point, $x_0$.  In this modification, we let $s= e$.  This will often define a line segment, $\Gamma_{x_0,v}$, where $v$ is likely on the edge of $\mathcal{P}$.

Because $\Gamma_{x_0,v}$ is limited in its length, we seek to extend it from its initial end points $x_0$ and $v$ without increasing the number of break points associated with $\Gamma_{x_0,v}$.  By extending the line $\Gamma_{x_0,v}$ to form a new line segment $\Gamma_{u',v'}$ where the end points $u'$ and $v'$ are at least on the edges of the polytope, $\mathcal{P}$, we increase the reachable points $\mathcal{P}$ (see Theorem~\ref{thm:ext}).  We also employ the extension in each iteration, $k \le I$ where $I$ is the number of iterations in the HAR algorithm. Algorithm~\ref{alg:HAR_ext} defines the extension of a tropical line segment without adding break points.

\begin{algorithm}
\caption{Tropical line segment, $\Gamma_{u,v}$, Extension}\label{alg:HAR_ext} 
\begin{algorithmic}
\State {\bf Input:} End points, $u$ and $v$ defining a tropical line segment, $\Gamma_{u,v}$; a scalar $d\in \mathbb{R}$ that defines the length of the segment between the new end points $u'$ and $v'$ and the break points, $b_u$ and $b_v$, immediately preceding each new end point.
\State {\bf Output:} Points $u'$ and $v'$ defining a new line segment $\Gamma_{u',v'}$ with the same break points as $\Gamma_{u,v}$.
\State Set $\delta_1=u-b_u$ and $\delta_2=v-b_v$
\State Define $u'=b_u+d*\delta_1$ and $v'=b_v+d*\delta_2$\\
\Return $u',v'$.
\end{algorithmic}
\end{algorithm}

Now we define Algorithm~\ref{alg:HAR_vert3} which 
says that the cardinality, $\nu=s$ where $s$ is the number of vertices of a given tropical polytope.  In addition, we utilize Algorithm~\ref{alg:HAR_ext} to extend line segments used to sample in $\mathcal{P}$.  This algorithm utilizes rejection sampling so it is more computationally expensive than previous vertex HAR algorithms but results are less biased to the edges of the polytope.

\begin{algorithm}
\caption{Vertex HAR Sampling from $\mathcal{P}$ with Extension and $\nu = s$}\label{alg:HAR_vert3} 
\begin{algorithmic}
\State {\bf Input:} Tropical polytope $\mathcal{P}:=\tconv(v^1, \ldots, v^s)$ and an initial point $x_0 \in \mathcal{P}$ and maximum iteration $I \geq 1$.  
\State {\bf Output:} A random point $x \in \mathcal{P}$.
\State Set $k = 0$.
\While{$k < I-1$,}

\State $k=k+1$
\State Randomly select $v^{i_1}_k$ and $v^{i_{2}}_k$ such that $v^{i_1}_k$ and $v^{i_{2}}_k \in \{v^1,...,v^s \}$ the randomly permuted vertex set of $\mathcal{P}$.  
\State Generate a random point $v_0$ from a tropical line segment $\Gamma^k_{v^{i_1}_k, v^{i_2}_k}$, using Algorithm \ref{eq:troline}.
\State $j=3$
\While{$j < \nu$,}

\State $j=j+1$
\State Extend the line segment $\Gamma^k_{v_0, v^{i_j}_k}$ to form a line segment $\Gamma_{u_j ',v_j '}$ with the same break points as $\Gamma^k_{v_0, v^{i_j}_k}$ using Algorithm~\ref{alg:HAR_ext}.
\State Generate a random point $v$ from a tropical line segment $\Gamma_{u_j ',v_j '}$, using Algorithm~\ref{eq:troline}.
\If{$v\in \mathcal{P}$ }

\State Set $v_0 = v$

\Else

\State $j=j-1$
\EndIf
\EndWhile
\State Extend the line segment $\Gamma^k_{x_k, v}$ to form a line segment $\Gamma_{u_k ',v_k '}$ with the same break points as $\Gamma^k_{x_k, v}$ using Algorithm~\ref{alg:HAR_ext}.
\State Generate a random point $x_{k+1}$ from a tropical line segment $\Gamma^k_{u_k, v_k}$ using Algorithm~\ref{eq:troline}.
 
\EndWhile\\
\Return $x := x_{I}$.
\end{algorithmic}
\end{algorithm}

\begin{remark}
Note that from our computational experiments, when $s > e$, then sampling via Algorithm \ref{alg:HAR_vert3} from a uniform distribution over a tropical polytope seems not well mixed.
\end{remark}

Because Algorithm~\ref{alg:HAR_vert3} is essentially defining a tropical line segment that spans a polytope, $\mathcal{P}$, which contains a starting point and then selects the successor, $x_1$, uniformly from that line segment, it seems intuitive that the $P(x_0,x_1)=P(x_1,x_0)$ since the line segment containing $x_0$ and $x_1$ is unique and is reversible.  This is not always the case as illustrated in the following example.

\begin{example}
Consider the tropical polytope, $\mathcal{P}$, defined by the vertices $V=\{(0,0,0),\;(0,3,1),\; (0,2,5)\}$ and consider two points, $x_0,x_1\in\mathcal{P}$, where $x_0=(0,2,2)$ and is the starting (input) point for Algorithm~\ref{alg:HAR_vert3} and $x_1=(0,1,3)$ which is the next point in the chain (output).  The goal is to show that $P(x_0,x_1)=P(x_1,x_0)$ and as we will see, this will only be proven if the probability density function associated with moving from $x_0$ to $x_1$ is the same as moving from $x_1$ to $x_0$. Figure~\ref{fig:algAB} shows $\Gamma_{A,B}$ containing $x_0$ and $x_1$.

\begin{figure}[H]
 \centering
 \includegraphics[width=0.45\textwidth]{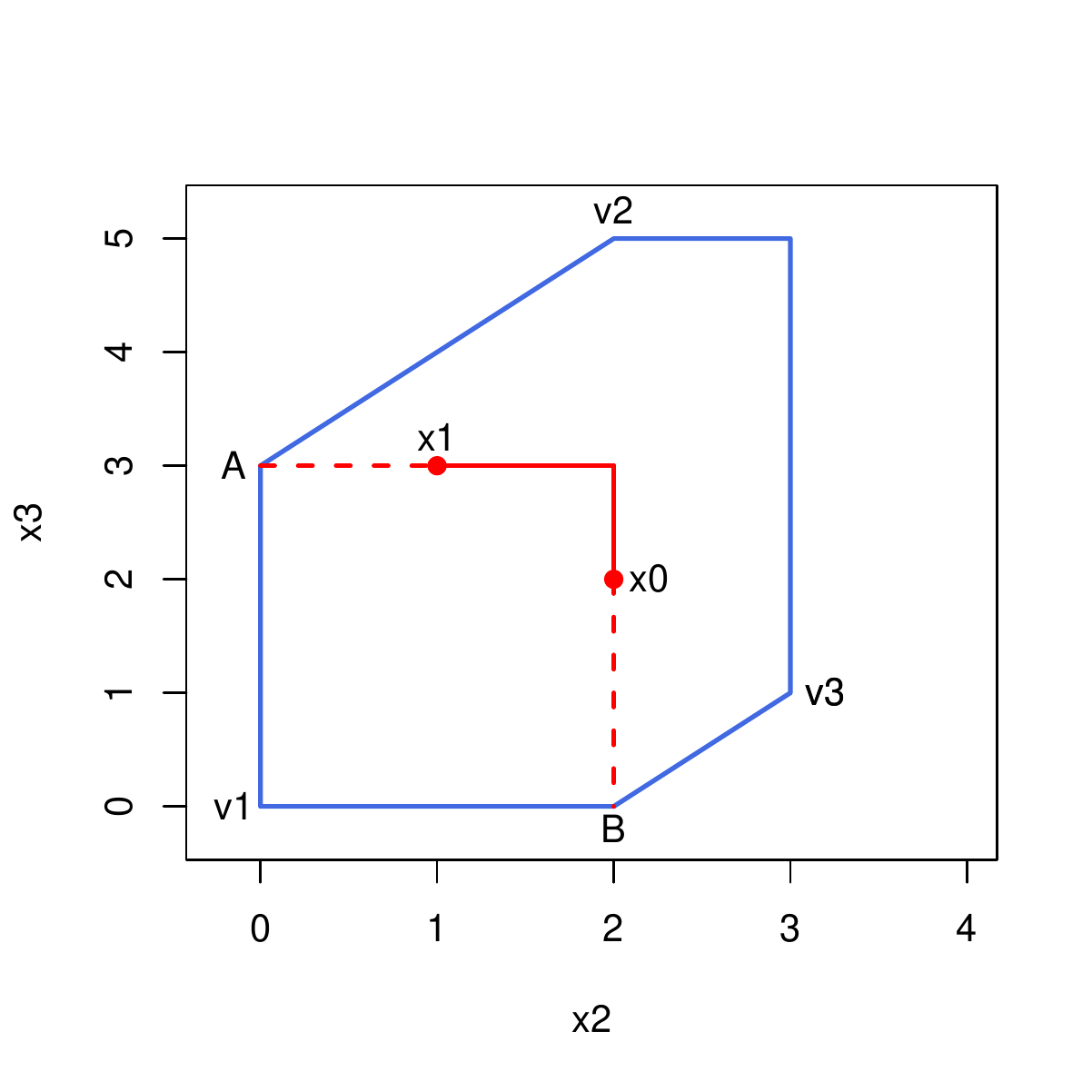} ~
\caption{The line segment $\Gamma_{A,B}$ containing $x_0$ and $x_1$.  Dashed lines indicate the extension to the boundary of $\mathcal{P}$ (blue lines).}
\label{fig:algAB}
\end{figure}

Specifically, we want to find the line segment, $\Gamma_{A,B}$ which spans the polytope, $\mathcal{P}$ such that $x_0,x_1 \in \Gamma_{A,B}$.  If we identify $\Gamma_{A,B}$, then it is possible to sample uniformly on $\Gamma_{A,B}$ and move from $x_0$ to $x_1$ as well as the converse.  Algorithm~\ref{alg:HAR_vert3} moves from $x_0$ to $x_1$ by successively drawing lines between points in $\mathcal{P}$ and vertices defining the tropical convex hull and extending the line segments to the boundary of $\mathcal{P}$. The initial line segment is drawn between two randomly chosen vertices, $u,v\in V$ and then a point is randomly chosen from this line. The probability of selecting any two vertices is $\frac{1}{\binom{n}{2}}$ where $n=|V|$, the cardinality of $V$ which, in this case, $n=3$, and the probability of choosing a pair of vertices from $V$ is $\frac{1}{3}$.  Each subsequent line segment is drawn from a point randomly chosen from the previous line segment to the next vertex and extending the line segment (if needed) to the boundary of $\mathcal{P}$. After the final line segment is constructed and a point sampled, a final line segment is drawn between the starting point and this final point.  The line segment is once again extended to the boundary of $\mathcal{P}$ to form $\Gamma_{A,B}$ and a point is sampled uniformly.

The orientation of $\Gamma_{A,B}$ to any line segment formed in Algorithm~\ref{alg:HAR_vert3} will determine the points on the line segments that lead to identifying $\Gamma_{A,B}$.  This results in either a single point sampled uniformly on a line segment or the choice of a point from an interval on a line segment that will result in reaching $\Gamma_{A,B}$. Figure~\ref{fig:alg6} shows the progression of line segments formed during an iteration that begins with $v^1$ and $v^2$ going from $x_0$ to $x_1$ and the reverse. 

\begin{figure}[H]
 \centering
 \includegraphics[width=0.43\textwidth]{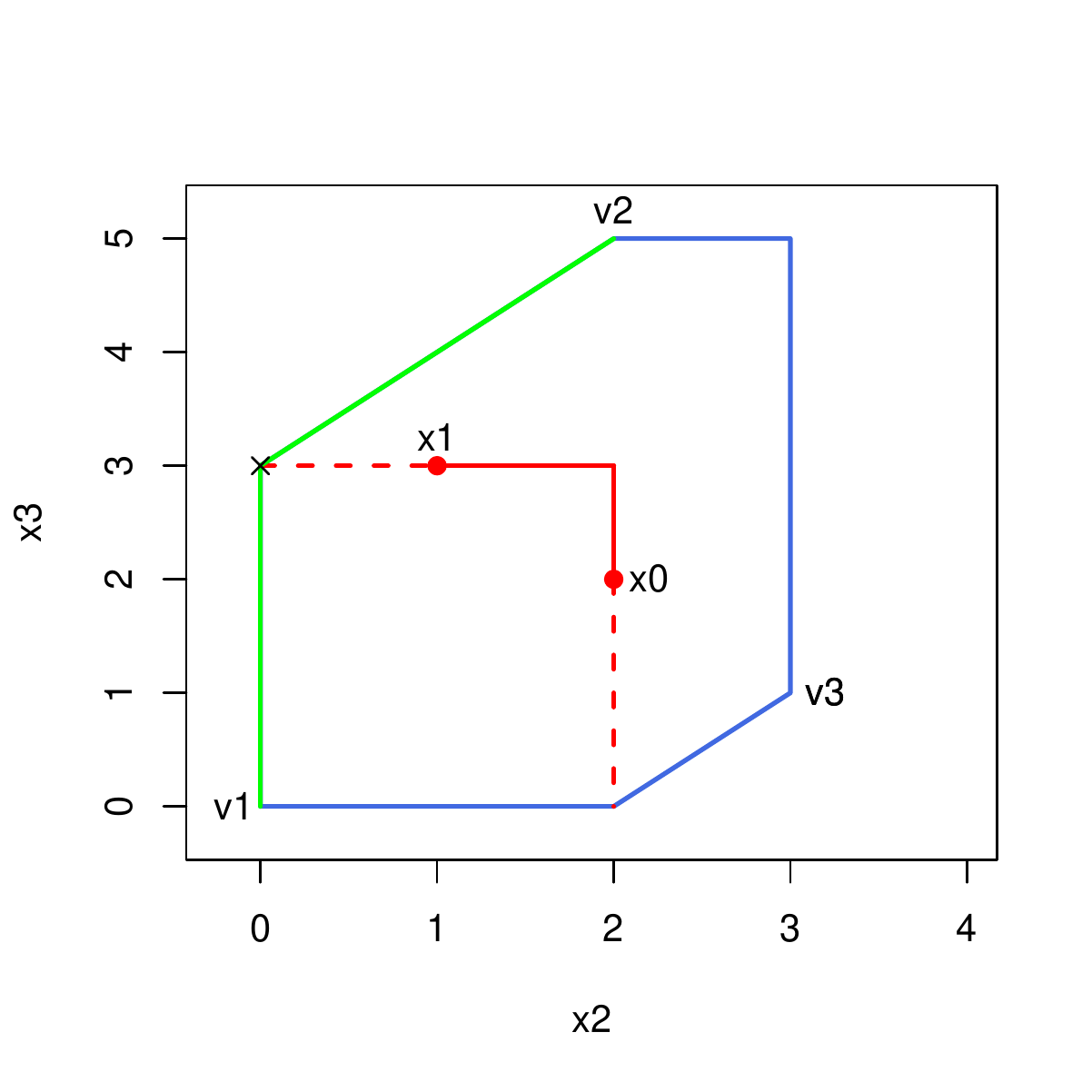} 
 \includegraphics[width=0.43\textwidth]{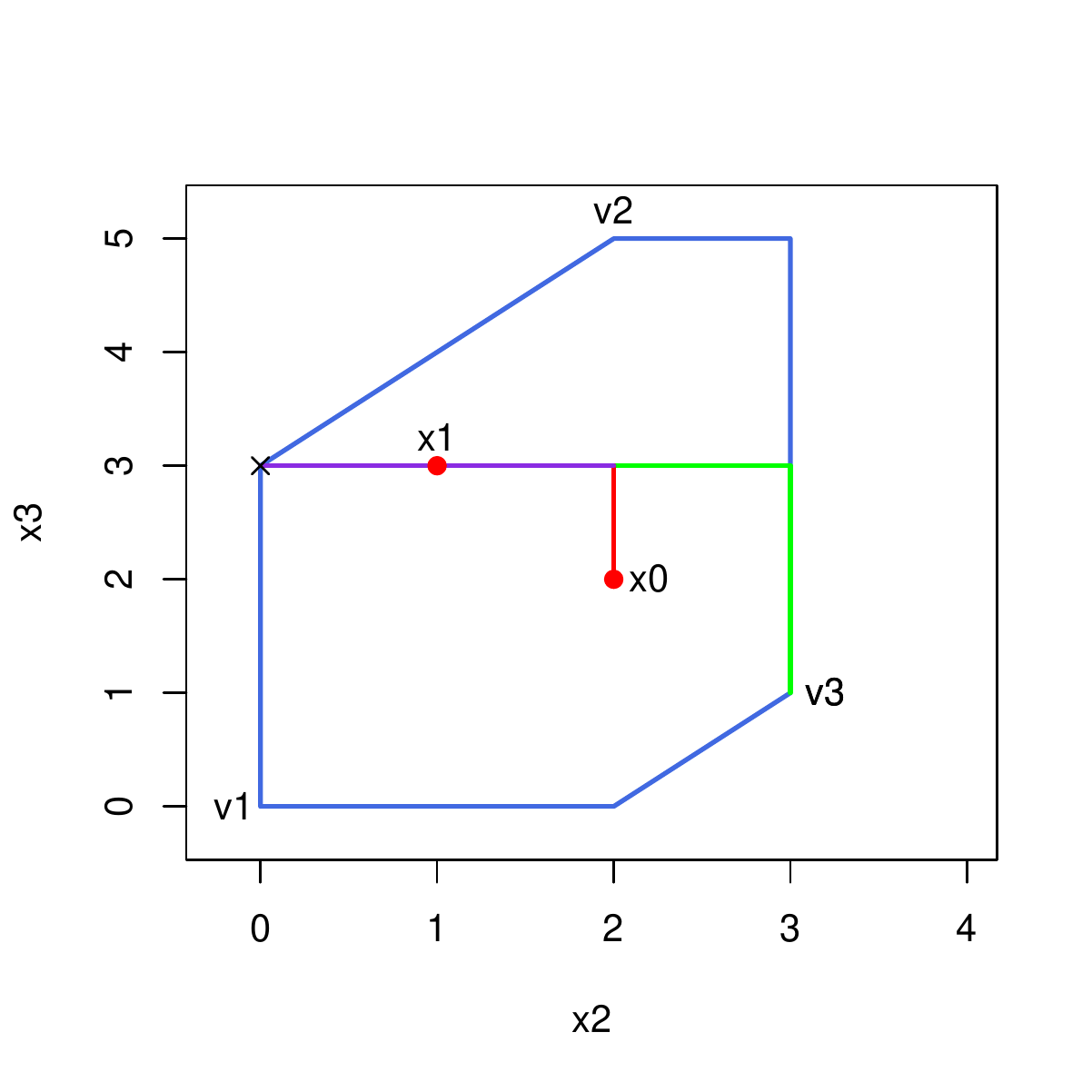} 
 \includegraphics[width=0.43\textwidth]{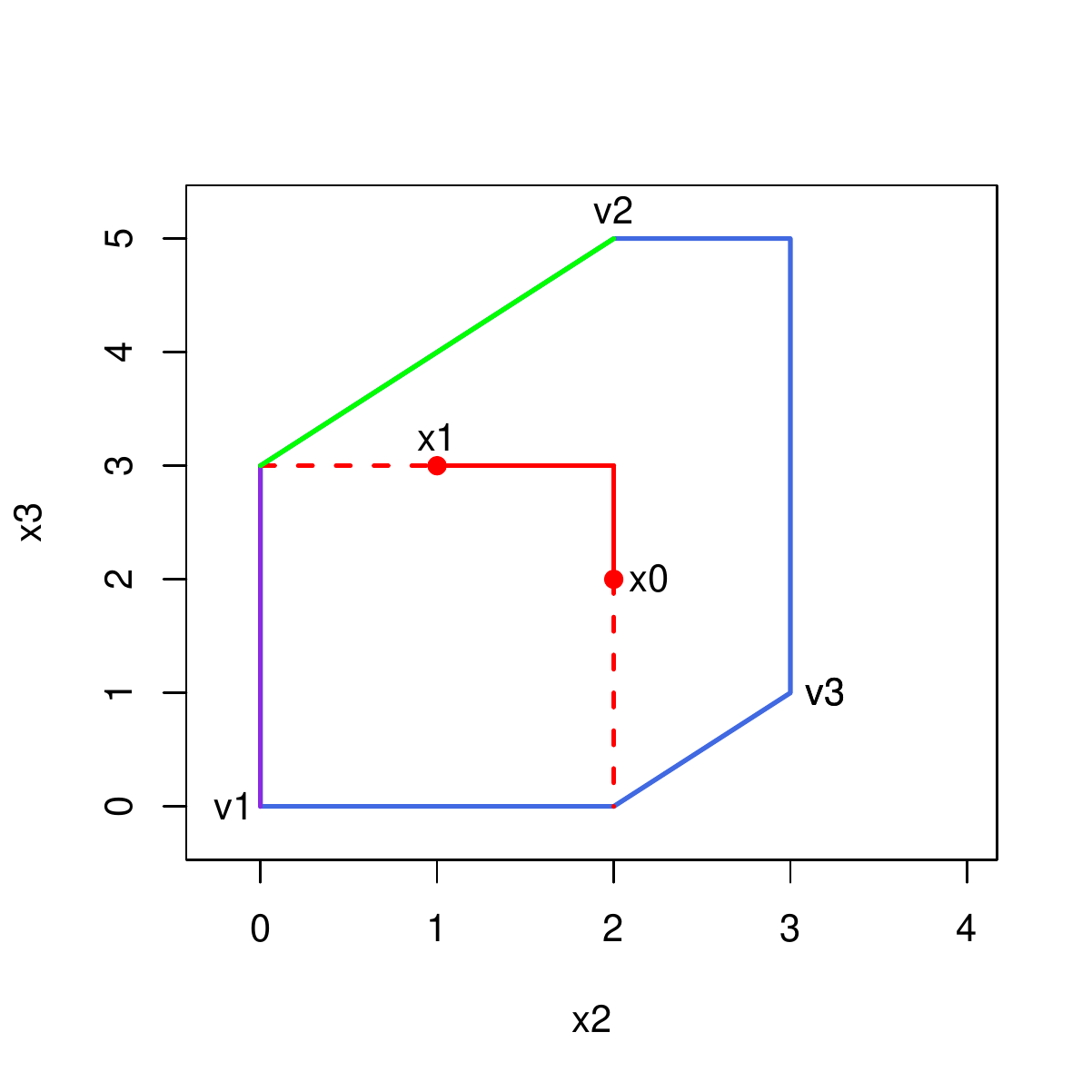} 
 \includegraphics[width=0.43\textwidth]{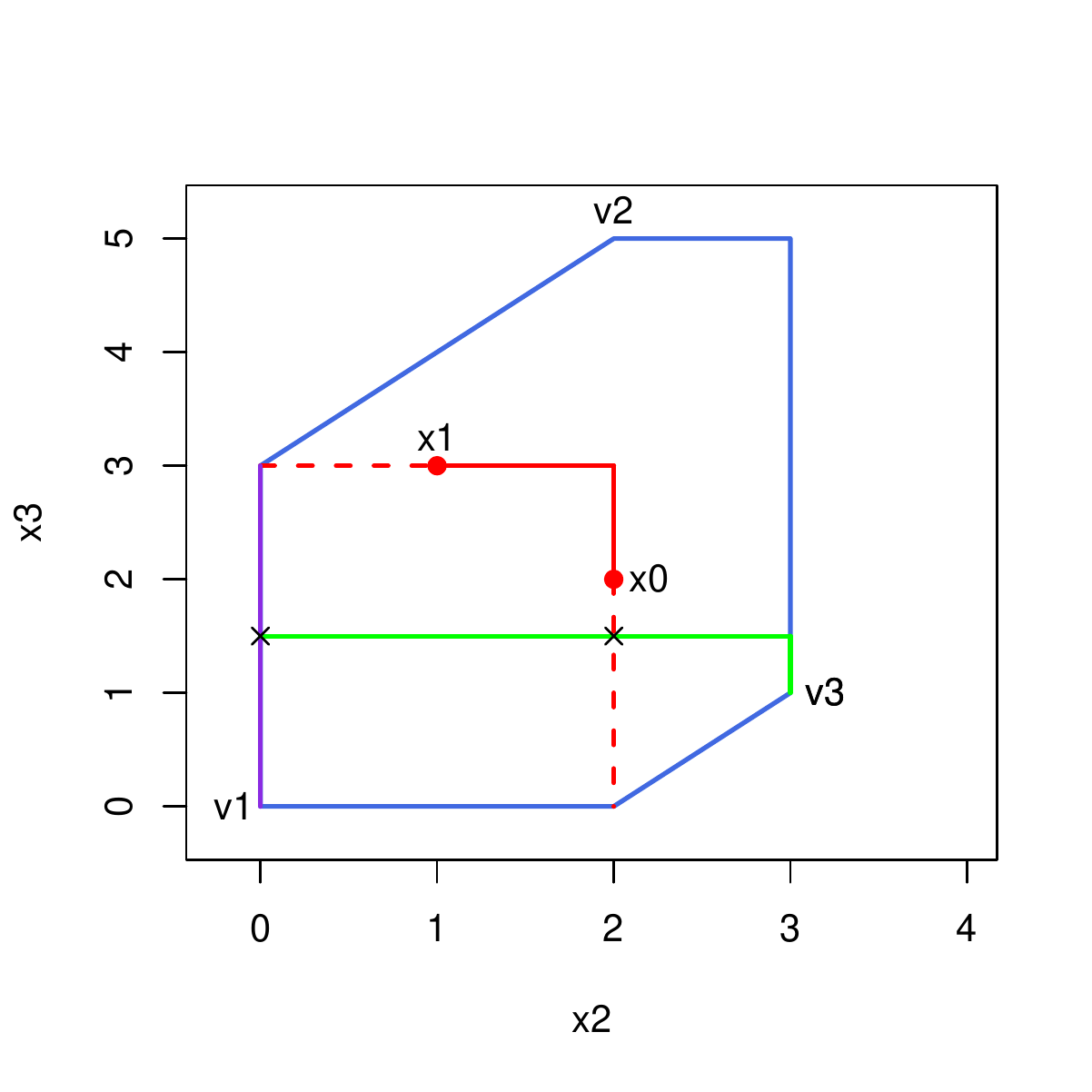} 
\caption{The progression of line segments in $\mathcal{P}$ moving from $x_0$ to $x_1$ (top row) and the reverse (bottom row), beginning with $v^1$ and $v^2$ that are necessary to build $\Gamma_{A,B}$ such that $x_0,x_1\in \Gamma_{A,B}$. The top left figure shows $\Gamma_{v^1,v^2}$ and $x\in\Gamma_{v^1,v^2}$ that will lead to $\Gamma_{A,B}$.  In the top right figure, we see an interval of points (purple portion) in $\Gamma_{x,v^3}$ that will result in identifying $\Gamma_{A,B}$. The bottom figures show the sequence to $\Gamma_{A,B}$ if $x_1$ is the initial point.  In either case, there are similar sequences with each pair of initial vertices.}
\label{fig:alg6}
\end{figure}

The pdfs $P(x_0,x_1)$ or $P(x_1,x_0)$ are a combination of sequences that identify $\Gamma_{A,B}$ starting from line segments defined by each pair of vertices. However, we end up sampling from line segments of differing tropical lengths moving from $x_0$ to $x_1$ versus $x_1$ to $x_0$ leading to $P(x_0,x_1)\ne P(x_1,x_0)$ meaning this is not symmetric.
\end{example}

\subsubsection{Vertex HAR Sampling with Extrapolation}
One reason that the Vertex HAR samplings are elegant and computationally convenient is that they essentially use only vertices and line segments without using external coordinates.
(Sampling a random point from the ambient space and then projecting it to a tropical convex polytope can lead to a biased sampling distribution as evident in Figure \ref{fig:tropPoly}.)
In order to realize the uniform sampling distribution, which can then be transformed to any desired distribution, it may be beneficial to further proceed this intrinsic geometry nature.
To make it easy to compute the transition probabilities, it may help to extend a tropical line segment in a more symmetric way, which we call ``extrapolation'' to distinguish from the ``extension'' in the previous algorithms.
Specifically, in a tropical polytope $\mathcal{P}:=\tconv(v^1, \ldots, v^s)$ in $\mathbb{R}^e/\mathbb{R}{\bf 1}$, an ``extrapolated'' point from $v^i$ through a point $x$ is defined by the projection to the other vertices or $\mathcal{P}^{-i}:=\tconv(v^1, \ldots, v^{i-1}, v^{i+1}, \ldots, v^s)$ as
\begin{equation}\label{eq:tropproj2}
\pi_{\mathcal{P}^{-i}} (x) := 
\bigoplus_{\substack{l=1 \\ l\neq i}}^s \lambda_l \odot v^l~ ~ {\rm where} ~ ~ \lambda_l \!=\! {\rm min}(x-v^l). 
\end{equation}

\begin{lemma}
$\pi_{\mathcal{P}^{-i}} (x) \in \mathcal{P}$.
\end{lemma}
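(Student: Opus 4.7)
The statement is essentially an inclusion between tropical convex hulls, and the plan is to exploit monotonicity of $\tconv$ under adding generators. First I would unpack the definition in Equation \eqref{eq:tropproj2}: the point $\pi_{\mathcal{P}^{-i}}(x)$ is defined as the tropical linear combination
\[
\pi_{\mathcal{P}^{-i}}(x) \;=\; \bigoplus_{\substack{l=1 \\ l\neq i}}^s \lambda_l \odot v^l,
\]
for specific scalars $\lambda_l = \min(x - v^l) \in \R$. In particular, regardless of the value of $x$, this expression is manifestly a tropical linear combination of the vectors $v^1, \ldots, v^{i-1}, v^{i+1}, \ldots, v^s$.

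Next I would invoke Definition \ref{def:polytope}, which characterizes $\tconv(V)$ as the set of all tropical linear combinations of points in $V$. Applied to $V = \{v^1,\ldots,v^{i-1},v^{i+1},\ldots,v^s\}$, this tells us immediately that $\pi_{\mathcal{P}^{-i}}(x) \in \tconv(v^1,\ldots,v^{i-1},v^{i+1},\ldots,v^s) = \mathcal{P}^{-i}$.

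Finally, to conclude I would observe that $\mathcal{P}^{-i} \subseteq \mathcal{P}$: since $\mathcal{P}$ is tropically convex and contains every $v^l$ (including those with $l \neq i$), and $\mathcal{P}^{-i}$ is by Definition \ref{def:polytope} the smallest tropically convex set containing $\{v^l : l \neq i\}$, we obtain $\mathcal{P}^{-i} \subseteq \mathcal{P}$. Chaining the two memberships yields $\pi_{\mathcal{P}^{-i}}(x) \in \mathcal{P}$, as claimed. There is no real obstacle here; the only thing worth being slightly careful about is noting that the $\lambda_l$ defined in Equation \eqref{eq:tropproj2} are genuine real numbers (not $-\infty$), so the tropical linear combination is well-defined and the argument goes through without any degenerate cases.
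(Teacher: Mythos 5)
Your proof is correct, and it takes a slightly different route from the paper's. The paper argues directly inside $\mathcal{P}$: it writes a generic point of $\mathcal{P}$ as $a_1 \odot v^1 \oplus \cdots \oplus a_s \odot v^s$ with $a_l \in \R$ and observes that $\pi_{\mathcal{P}^{-i}}(x)$ is obtained ``close to the limit $a_i \to -\infty$,'' i.e.\ by taking the coefficient of $v^i$ sufficiently negative that $v^i$ never attains the coordinatewise maximum. You instead note that $\pi_{\mathcal{P}^{-i}}(x)$ is by construction a tropical linear combination of $\{v^l : l \neq i\}$, hence lies in $\mathcal{P}^{-i}$, and then use the minimality characterization in Definition \ref{def:polytope} to get $\mathcal{P}^{-i} \subseteq \mathcal{P}$ (since $\mathcal{P}$ is tropically convex and contains those vertices). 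The two arguments are close in spirit, but yours has a small advantage in rigor: since Definition \ref{def:polytope} only allows finite real coefficients, the paper's limit phrasing implicitly requires choosing $a_i$ finite but sufficiently negative (so that $a_i + v^i_j \leq \max_{l \neq i}(\lambda_l + v^l_j)$ for every coordinate $j$), a step it leaves informal; your appeal to hull monotonicity sidesteps that bookkeeping entirely, at the cost of invoking the equivalence of the two characterizations of $\tconv$ stated in the definition. Either way the conclusion $\pi_{\mathcal{P}^{-i}}(x) \in \mathcal{P}$ follows, and your remark that the $\lambda_l$ are genuine real numbers is exactly the right degeneracy check.
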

\begin{proof}
A point in $\mathcal{P}$ is represented as $\sum_{l=1}^{s} a_l \odot v^l$ where $a_l \in \mathbb{R}$. $\pi_{\mathcal{P}^{-i}} (x)$ is obtained as a special point close to the limit $a_i \to -\infty$.
\end{proof}
\begin{lemma}
A point $x \in \mathcal{P}$ is on the line segment between $w_i$ and $\pi_{\mathcal{P}^{-i}} (x)$.
\end{lemma}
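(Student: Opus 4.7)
The plan is to use the defining formula of the tropical projection in Equation~\eqref{eq:tropproj} together with the fact that $\pi_{\mathcal{P}}$ restricts to the identity on $\mathcal{P}$. The goal is to exhibit $x$ explicitly as a tropical linear combination of $v^i$ and $\pi_{\mathcal{P}^{-i}}(x)$, which by Definition~\ref{def:polytope} will place it in $\tconv(\{v^i,\pi_{\mathcal{P}^{-i}}(x)\})$, i.e., on the tropical line segment between those two points.

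First I would observe that for $x \in \mathcal{P}$, the nearest-point property in Definition~\ref{def:proj} forces $d_{\rm tr}(x,\pi_{\mathcal{P}}(x))=0$, and since $d_{\rm tr}$ is a metric we get $\pi_{\mathcal{P}}(x)=x$. Spelled out via Equation~\eqref{eq:tropproj},
\[
x \;=\; \pi_{\mathcal{P}}(x) \;=\; \bigoplus_{l=1}^{s} \lambda_l \odot v^l, \qquad \lambda_l=\min(x-v^l).
\]
Splitting off the $l=i$ summand and recognizing the remaining tropical sum as $\pi_{\mathcal{P}^{-i}}(x)$ from Equation~\eqref{eq:tropproj2}, I obtain
\[
x \;=\; \lambda_i\odot v^i \;\oplus\; \bigoplus_{l\neq i}\lambda_l\odot v^l \;=\; \lambda_i\odot v^i \;\oplus\; 0\odot \pi_{\mathcal{P}^{-i}}(x).
\]
By Definition~\ref{def:polytope} this displays $x$ as a tropical linear combination of the two points $v^i$ and $\pi_{\mathcal{P}^{-i}}(x)$, hence $x\in\tconv(\{v^i,\pi_{\mathcal{P}^{-i}}(x)\})$, which is by definition the tropical line segment joining them.

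The only genuinely non-trivial step is the first one, namely that the projection formula returns $x$ itself when applied to a point already in $\mathcal{P}$. This is the tropical analogue of a retraction and is implicit in Definition~\ref{def:proj} via the inequality $d_{\rm tr}(x,\pi_{\mathcal{P}}(x))\le d_{\rm tr}(x,y)$ for all $y\in\mathcal{P}$: taking $y=x$ yields $d_{\rm tr}(x,\pi_{\mathcal{P}}(x))=0$. Everything else is bookkeeping with the $\oplus$ operation, so I expect the proof to be just a few lines. The subtlety one must avoid is thinking of $\pi_{\mathcal{P}^{-i}}(x)$ as a projection of $x$ onto a different polytope and trying to reason geometrically from there; the cleanest route is purely algebraic, splitting the single tropical sum that defines $\pi_{\mathcal{P}}(x)=x$ into the $i$-th term plus the rest.
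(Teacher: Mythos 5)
Your proof is correct and follows essentially the same route as the paper's: both exhibit $x$ as $\lambda_i \odot v^i \oplus 0 \odot \pi_{\mathcal{P}^{-i}}(x)$, i.e., as a tropical combination of $v^i$ and the extrapolated point. The only difference is that you make explicit the retraction step $\pi_{\mathcal{P}}(x)=x$ for $x\in\mathcal{P}$ (via $d_{\rm tr}(x,\pi_{\mathcal{P}}(x))\le d_{\rm tr}(x,x)=0$), which the paper leaves implicit.
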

\begin{proof}
The line segment connecting $w_i$ and $\pi_{\mathcal{P}^{-i}} (x)$ is represented as $a_i \odot v^i \oplus a_{-i} \odot \pi_{\mathcal{P}^{-i}} (x)$ where $a_{i}, a_{-i} \in \mathbb{R}$.
$x$ is obtained when $a_{i} = \lambda_i$ and $a_{-i}=0$.
\end{proof}
That is, we can regard $\pi_{\mathcal{P}^{-i}} (x)$ as an ``extrapolated'' point.
Algorithm \ref{alg:HAR_extrapolation} samples uniformly from the entire line segment connecting $\pi_{\mathcal{P}^{-i}} (x)$ and $v^i$ where $x$ is a given initial point.

\begin{algorithm}
\caption{Vertex HAR Sampling from $\mathcal{P}$ using extrapolation with $\nu = 1$}\label{alg:HAR_extrapolation}
\begin{algorithmic}
\State {\bf Input:} Tropical polytope $\mathcal{P}:=\tconv(v^1, \ldots, v^s)$ and an initial point $x_0 \in \mathcal{P}$ and maximum iteration $I \geq 1$.
\State {\bf Output:} A random point $x \in \mathcal{P}$.
\State Set $k = 0$.
\For{$k= 0, \ldots , I-1$,}
\State Randomly select $v^i_k$ such that $v^i_k \in \{v^1,...,v^s \}$ the vertex set of $\mathcal{P}$.
\State Generate a random point $x_{k+1}$ from a tropical line segment $\Gamma^k_{w^i_k, \pi_{\mathcal{P}^{-i}} (x_k)}$ using Algorithm \ref{eq:troline}.
\EndFor \\
\Return $x := x_{I}$.
\end{algorithmic}
\end{algorithm}

\begin{example}[Extrapolation]
Consider the tropical polytope generated by three vertices $(0, 0, 0), \, (0, 3, 1), \, (0, 2, 5)$ in $\mathbb R^3 \!/\mathbb R {\bf 1}$  (Figure \ref{fig:tropPoly} (top right)).
We apply the extrapolation method to the point $x = (0, 2, 2) \in \mathbb{R}^3/\mathbb{R}{\bf 1}$ for which $(\lambda_1, \lambda_2, \lambda_3)=(0,-1,-3)$.
That is, $x = \lambda \cdot v$.
Then
$\pi_{\mathcal{P}^{-1}} (x)=(0, 3, 3)$,
$\pi_{\mathcal{P}^{-2}} (x)=(0, 0, 2)$,
$\pi_{\mathcal{P}^{-3}} (x)=(0, 2, 0)$.
\end{example}
\begin{example}[Extrapolation with Additional Vertex]
If we add $w_4=(0,4,6)$ to the previous example, for $\mathcal{P'}:=\tconv(v^1, v^2, v^3, v^4)$, $\lambda_4=-4$ and $x = \lambda \cdot v$.
Then
$\pi_{\mathcal{P'}^{-1}} (x)=(0, 3, 3)$,
$\pi_{\mathcal{P'}^{-2}} (x)=(0, 0, 2)$,
$\pi_{\mathcal{P'}^{-3}} (x)=(0, 2, 2)$,
$\pi_{\mathcal{P'}^{-4}} (x)=(0, 2, 2)$.
\end{example}

The following proposition shows the connectivity of Markov chain via Algorithm \ref{alg:HAR_extrapolation} over the tropical polytope $\mathcal{P} = \tconv(v^1, \ldots , v^s) \subset \mathbb{R}^e / \mathbb{R} {\bf 1}$.
\begin{proposition}\label{pro:connectivity}
Any two points $u, v \in \mathcal{P} = \tconv(v^1, \ldots , v^s) \subset \mathbb{R}^e / \mathbb{R} {\bf 1}$ are connected via Algorithm \ref{alg:HAR_extrapolation}.
\end{proposition}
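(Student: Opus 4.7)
The plan is to show that the Markov chain defined by Algorithm~\ref{alg:HAR_extrapolation} is irreducible on $\mathcal{P}$, in the sense that for any $u, v \in \mathcal{P}$ and any open neighborhood $N$ of $v$ in $\mathcal{P}$, there is positive probability of reaching $N$ starting from $u$ in finitely many iterations. The key structural fact I would exploit is that at each step the algorithm samples (via Algorithm~\ref{alg:troplineseg}, which is uniform by Lemma~\ref{lm:prob}) from a tropical segment $\Gamma_{v^i, \pi_{\mathcal{P}^{-i}}(x)}$ whose endpoints always include the chosen vertex $v^i$. Consequently, for any $x \in \mathcal{P}$ and any $\epsilon > 0$, a single iteration that selects $v^i$ has positive probability of landing within the $\epsilon$-ball (in $d_{\rm tr}$) of $v^i$, since Lemma~\ref{lm:prob} gives positive probability to any subinterval of $[\min(\pi_{\mathcal{P}^{-i}}(x)-v^i),\max(\pi_{\mathcal{P}^{-i}}(x)-v^i)]$ near the $v^i$-endpoint.

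Next, I would induct on the number of vertices $s$. The base case $s = 2$ is immediate: $\mathcal{P} = \Gamma_{v^1, v^2}$ is a single tropical line segment, so $\pi_{\mathcal{P}^{-1}}(u) = v^2$ and $\pi_{\mathcal{P}^{-2}}(u) = v^1$, and one iteration samples uniformly from the entire $\Gamma_{v^1, v^2}$; by Lemma~\ref{lm:prob} any nonempty relatively open subset is hit with positive probability. For the inductive step, use Definition~\ref{def:polytope} to write $v = \bigoplus_{l=1}^s \mu_l \odot v^l$, fix any index $i$, and observe that $v = \mu_i \odot v^i \oplus \pi_{\mathcal{P}^{-i}}(v)$ lies on the segment $\Gamma_{v^i, \pi_{\mathcal{P}^{-i}}(v)}$. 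It then suffices to first steer the chain into an arbitrarily small neighborhood of $\pi_{\mathcal{P}^{-i}}(v) \in \mathcal{P}^{-i}$, and then to select vertex $v^i$ on the final iteration; continuity of the tropical projection map (which follows from the explicit min–max formula in Equation~\eqref{eq:tropproj2}) ensures the resulting segment is close (in Hausdorff distance) to $\Gamma_{v^i, \pi_{\mathcal{P}^{-i}}(v)}$, so its intersection with $N$ has positive tropical length and is hit with positive probability by another application of Lemma~\ref{lm:prob}.

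The main obstacle is that Algorithm~\ref{alg:HAR_extrapolation} on $\mathcal{P}$ does not literally restrict to the same algorithm on $\mathcal{P}^{-i}$, so the inductive hypothesis cannot be invoked verbatim: choosing $v^i$ during the build-up phase will generally pull the iterate off $\mathcal{P}^{-i}$. I would handle this through a parametric continuity argument. For a fixed vertex schedule $(i_1, \ldots, i_k)$ with $i_j \ne i$, view the iterate $x_k$ as a piecewise continuous function of the sampled scalars $(\ell_1, \ldots, \ell_k)$ from Algorithm~\ref{alg:troplineseg}; then show that by cycling the schedule through all vertices in $\{v^1,\ldots,v^s\}\setminus\{v^i\}$ and using the first step (after reaching near $v^{i_1}$) to essentially reset the state onto $\mathcal{P}^{-i}$, the image of $(\ell_1,\ldots,\ell_k)$ contains an open neighborhood of $\pi_{\mathcal{P}^{-i}}(v)$ in $\mathcal{P}^{-i}$. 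Positivity of the product of uniform densities on each $\ell_j$-interval then translates openness of this image into a strictly positive hitting probability, closing the induction and completing the proof.
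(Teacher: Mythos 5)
You have set yourself a strictly stronger goal than the paper: you interpret ``connected'' as irreducibility (positive probability of hitting any neighborhood of $v$ from $u$ in finitely many steps), whereas the paper only exhibits a finite \emph{route} of admissible proposal segments, namely $u \rightarrow \pi_{\mathcal{P}^{-(s+1)}}(u) \rightarrow \pi_{\mathcal{P}^{-(s+1)}}(v) \rightarrow v$, using that $\pi_{\mathcal{P}^{-i}}(x)$ is an endpoint of the segment generated at $x$ when vertex $v^i$ is chosen, and that $v$ lies on $\Gamma_{v^{s+1},\,\pi_{\mathcal{P}^{-(s+1)}}(v)}$. Your base case and your final step (continuity of the projection in Equation~\eqref{eq:tropproj2} plus Lemma~\ref{lm:prob} to turn ``the proposal segment passes within $\epsilon$ of $v$'' into positive probability) are fine. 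The genuine gap is the inductive step you yourself flag: you must show the chain can be brought, with positive probability, arbitrarily close to $\pi_{\mathcal{P}^{-i}}(v)$, and your ``parametric continuity argument'' is only a statement of what needs to be proved, not an argument. The inductive hypothesis concerns Algorithm~\ref{alg:HAR_extrapolation} run on the polytope $\tconv(\{v^1,\ldots,v^s\}\setminus\{v^i\})$, but the actual chain on $\mathcal{P}$ uses extrapolations $\pi_{\mathcal{P}^{-j}}$ whose defining vertex sets still contain $v^i$, so its proposal segments from a point of $\mathcal{P}^{-i}$ generally leave $\mathcal{P}^{-i}$; no mechanism is given for ``resetting the state onto $\mathcal{P}^{-i}$'' (the exact endpoint $\pi_{\mathcal{P}^{-i}}(x)$ is hit with probability zero), nor for shadowing the smaller-polytope chain by the larger one.

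Moreover, as formulated the fix would fail on measure-theoretic grounds: you ask that the image of the sampled scalars $(\ell_1,\ldots,\ell_k)$ contain an open neighborhood of $\pi_{\mathcal{P}^{-i}}(v)$ \emph{in} $\mathcal{P}^{-i}$, and then claim positivity of the uniform densities converts this into positive hitting probability. When $\mathcal{P}^{-i}$ is of lower dimension than $\mathcal{P}$, such a relatively open set is Lebesgue-null in $\mathcal{P}$, and its preimage under the (piecewise continuous) parameter-to-state map can be a null subset of the parameter cube, so no positive probability follows. What is actually needed is either (i) the paper's weaker route interpretation, in which case your probabilistic machinery is unnecessary and the argument reduces to the paper's induction, or (ii) an explicit finite schedule of vertex choices and scalar values that lands \emph{exactly} (or arbitrarily close, with an open set of parameters nearby) at $\pi_{\mathcal{P}^{-i}}(v)$, together with a continuity argument at that parameter point to fatten it into an open preimage of a full-dimensional neighborhood in $\mathcal{P}$. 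That construction is the heart of the claim and is missing; until it is supplied, the induction does not close.
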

\begin{proof}
We prove this proposition by induction on $s$.  Suppose $s = 2$.  Then $\mathcal{P} = \tconv(v^1, v^2)$, which is a tropical line segment. Therefore it is trivial.
Suppose that any two points $x, y \in \tconv(v^1, \ldots , v^s) \subset \mathbb{R}^e / \mathbb{R} {\bf 1}$ are connected via Algorithm \ref{alg:HAR_extrapolation} for some $s > 2$.
Then for any two points $u, v \in \tconv(v^1, \ldots , v^s, v^{s+1})$,
there is a route via Algorithm \ref{alg:HAR_extrapolation} as $u \rightarrow \pi_{\mathcal{P}^{-(s+1)}} (u) (\in \tconv(v^1, \ldots , v^s)) \rightarrow \pi_{\mathcal{P}^{-(s+1)}} (v) (\in \tconv(v^1, \ldots , v^s)) \rightarrow v$.
\end{proof}

Given that any two points in $\mathcal{P}$ are connected after finite steps of Algorithm \ref{alg:HAR_extrapolation} by Proposition \ref{pro:connectivity}, 
the following proposition on the symmetric proposal distribution indicates that Algorithm \ref{alg:HAR_extrapolation} usually leads to the uniform distribution.
\begin{proposition} \label{prop:symmetric_proposal_distribution}
Let $\mathcal{P}$ be a tropical polytope with a set of vertices $\{v^1, \ldots , v^s\}$ $\subset \mathbb{R}^e / \mathbb{R} {\bf 1}$.
Let $x_1 \in \mathcal{P}$ be sampled by Algorithm \ref{alg:HAR_extrapolation} with the initial point $x_0 \in \mathcal{P}$ and $I = 1$.
Then, if there is a unique line segment connecting them that can be sampled by Algorithm \ref{alg:HAR_extrapolation} both with $x_0$ and $x_1$ as an initial point, the proposal distribution $P(x_0, x_1)$ is symmetric.
That is, 
the proposal distribution from $x_0$ to $x_1$, $P(x_0,x_1)$, and that from $x_1$ to $x_0$, $P(x_1,x_0)$ are equal.
\end{proposition}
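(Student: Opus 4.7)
The plan is to write out the single-step proposal density explicitly and then match the forward and reverse expressions term by term using the uniqueness hypothesis. For $I=1$ with initial point $x_0$, Algorithm \ref{alg:HAR_extrapolation} first selects a vertex $v^i$ uniformly at random (probability $1/s$) and then samples $x_1$ uniformly from the tropical line segment $\Gamma_{v^i, \pi_{\mathcal{P}^{-i}}(x_0)}$. By Lemma \ref{lm:prob}, the conditional density on this segment is the reciprocal of its tropical length $d_{\rm tr}(v^i, \pi_{\mathcal{P}^{-i}}(x_0))$. Summing over the vertex choices that produce a segment containing $x_1$ gives
\[
P(x_0, x_1) \;=\; \frac{1}{s} \sum_{i \in I(x_0, x_1)} \frac{1}{d_{\rm tr}(v^i, \pi_{\mathcal{P}^{-i}}(x_0))},
\]
where $I(x_0, x_1) := \{ i : x_1 \in \Gamma_{v^i,\, \pi_{\mathcal{P}^{-i}}(x_0)} \}$, and analogously for $P(x_1, x_0)$.

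Next I would invoke the uniqueness hypothesis, which asserts that exactly one line segment through $x_0$ and $x_1$ is reachable by the algorithm from either endpoint. This forces both index sets to be singletons, say $I(x_0, x_1) = I(x_1, x_0) = \{i\}$, reducing each sum to a single term. It then remains to show that the two candidate segments coincide as sets; since they share the vertex $v^i$, this reduces to proving $\pi_{\mathcal{P}^{-i}}(x_0) = \pi_{\mathcal{P}^{-i}}(x_1)$.

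The main step I would carry out is proving that the extrapolated projection $\pi_{\mathcal{P}^{-i}}$ is constant along the tropical segment $\Gamma_{v^i, \pi_{\mathcal{P}^{-i}}(x_0)}$. Any point $y$ on this segment has the form $y = a \odot v^i \oplus \pi_{\mathcal{P}^{-i}}(x_0)$ for some scalar $a$, and plugging this into the coefficient formula $\lambda_l(y) = \min(y - v^l)$ from Equation~\eqref{eq:tropproj2} for $l \neq i$, I would argue coordinatewise that the minimum is attained at positions where $v^i$'s contribution $a + v^i$ does not dominate, so $\lambda_l(y)$ depends only on $\pi_{\mathcal{P}^{-i}}(x_0)$. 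This is the tropical analogue of Lemma \ref{lm:project}: once the vertex $v^i$ is suppressed, the projection onto $\mathcal{P}^{-i}$ is locally constant along segments emanating from $v^i$. With this in hand, $\pi_{\mathcal{P}^{-i}}(x_0) = \pi_{\mathcal{P}^{-i}}(x_1)$, the two tropical lengths in the denominators agree, and the single-term sums for $P(x_0, x_1)$ and $P(x_1, x_0)$ coincide.

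The hard part will be pinning down the regime in which $\pi_{\mathcal{P}^{-i}}(y)$ is genuinely constant as $y$ moves along the segment. A priori, as $y$ slides toward $v^i$, the index attaining $\min(y - v^l)$ can shift, and at such transitions the projection may jump. I would isolate this by using the uniqueness assumption to restrict attention to the interior of the segment where no such transition occurs, and by appealing to the explicit parametrization of a tropical line segment given in Equation~\eqref{eq:troline} to control which coordinates change with the parameter. Once these geometric details are resolved, the symmetry $P(x_0, x_1) = P(x_1, x_0)$ follows immediately from the equality of the two single-term proposal densities.
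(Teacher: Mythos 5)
Your bookkeeping of the one-step density is essentially the paper's: writing $P(x_0,x_1)=\tfrac{1}{s}\sum_{i\in I(x_0,x_1)} 1/d_{\rm tr}\bigl(v^i,\pi_{\mathcal{P}^{-i}}(x_0)\bigr)$, with Lemma \ref{lm:prob} supplying the uniform density on each segment, and using the uniqueness hypothesis to cut each sum to one term, is exactly the intended structure (the paper writes $P(A,B)=p(\Gamma_{A,B}\mid A)\cdot d_{\rm tr}\bigl(v^i,\pi_{\mathcal{P}^{-i}}(A)\bigr)^{-1}$ with $p=1/s$). The gap is in what you call the main step. The claim that $\pi_{\mathcal{P}^{-i}}$ is constant along the sampled segment $\Gamma_{v^i,\pi_{\mathcal{P}^{-i}}(x_0)}$ is false in general, and the paper's own Example \ref{ex:counter_ex} is a counterexample: for the polytope with vertices $(0,2,1),\,(0,-1,0),\,(0,0,-2)$, both $(0,\epsilon,0)$ and $(0,-\epsilon,0)$ lie on the segment $\Gamma_{v^2,\pi_{\mathcal{P}^{-2}}((0,\epsilon,0))}$, yet $\pi_{\mathcal{P}^{-2}}(0,\epsilon,0)=(0,\epsilon,\epsilon-1)$ while $\pi_{\mathcal{P}^{-2}}(0,-\epsilon,0)=(0,0,-1)$; this is precisely the mechanism by which $P(A,B)>0$ and $P(B,A)=0$ can occur. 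So there is no standalone ``tropical analogue of Lemma \ref{lm:project}'' to prove here, and your fallback (restricting to the interior of the segment where no index transition occurs) is not something the uniqueness assumption gives you and is not made precise.

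The repair is that no such lemma is needed: the coincidence of the forward and backward segments is already contained in the hypothesis rather than derived from the geometry. Under the reading you adopt when reducing to singletons, the unique segment through $x_0$ and $x_1$ that the algorithm can produce is sampleable from both endpoints; since the only candidate from $x_0$ is $\Gamma_{v^i,\pi_{\mathcal{P}^{-i}}(x_0)}$ and the only candidate from $x_1$ is $\Gamma_{v^j,\pi_{\mathcal{P}^{-j}}(x_1)}$, both must equal that common segment and hence each other. Equal segments have the same pair of endpoints, so $d_{\rm tr}\bigl(v^i,\pi_{\mathcal{P}^{-i}}(x_0)\bigr)=d_{\rm tr}\bigl(v^j,\pi_{\mathcal{P}^{-j}}(x_1)\bigr)$ --- you do not even need $i=j$ or the pointwise equality $\pi_{\mathcal{P}^{-i}}(x_0)=\pi_{\mathcal{P}^{-i}}(x_1)$, only equality of the tropical lengths --- and both one-term densities equal $\tfrac{1}{s}$ times the reciprocal of that common length. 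This is the paper's (one-line) argument; the projection equality you were trying to establish is asserted there as a consequence of the hypothesis, not proved as a general fact, and indeed cannot be. With this substitution in place of your constancy argument, your write-up closes.
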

\begin{proof}
Let $\Gamma_{A,B}$ be the unique tropical line segment, that passes through A and B, which can be sampled by Algorithm \ref{alg:HAR_extrapolation}. Then,
\begin{equation}
P(A,B) 
= p(\Gamma_{A,B}|A) \frac{1}{ d_{\rm tr}(v^i,\pi_{\mathcal{P}^{-i}} (A))}
\end{equation}
is equal to
\begin{equation}
P(B,A) 
= p(\Gamma_{A,B}|B) \frac{1}{ d_{\rm tr}(v^i,\pi_{\mathcal{P}^{-i}} (B))}
\end{equation}
where $p(\Gamma_{A,B}|x) = \frac{1}{s}$ is the probability of choosing the line segment that passes through A and B in HAR at $x$ and
$\pi_{\mathcal{P}^{-i}} (A) = \pi_{\mathcal{P}^{-i}}(B) $.
\end{proof}
\begin{remark}
If there is no chance that a line segment that passes through A and B is sampled by Algorithm \ref{alg:HAR_extrapolation}, $P(A,B) = P(B,A) = 0$. In any cases, $P(A,B) = P(B,A)$.
\end{remark}

\begin{example}\label{ex:counter_ex}
Even if you can go from A to B or $P(A,B) > 0$, it is possible that you cannot go from B to A or $P(B,A) = 0$.
This is because $x_k$ and $x_{k+1}$ chosen in Algorithm \ref{alg:HAR_extrapolation} are not necessarily projected to the same point.
You can find a counter example in Figure \ref{fig:lambda3}. There $\pi_{\mathcal{P}^{-2}}(0,-\epsilon,0)=(0,0,-1), \pi_{\mathcal{P}^{-1}}(0,-\epsilon,0) = \pi_{\mathcal{P}^{-3}}(0,-\epsilon,0) = (0,-\epsilon,0)$ and $\pi_{\mathcal{P}^{-2}}(0,\epsilon,0)=(0,\epsilon,\epsilon-1)$ for $0 \leq \epsilon \leq 1$.
This means that you can go, for example, from $(0,\epsilon,0)$ to $(0,-\epsilon,0)$ but you cannot go in the other way.
Thus, the extrapolation is not unique in the sense that different line segments can pass through and share two given points.
\end{example}

\begin{figure}
 \centering
 \includegraphics[width=0.4\textwidth]{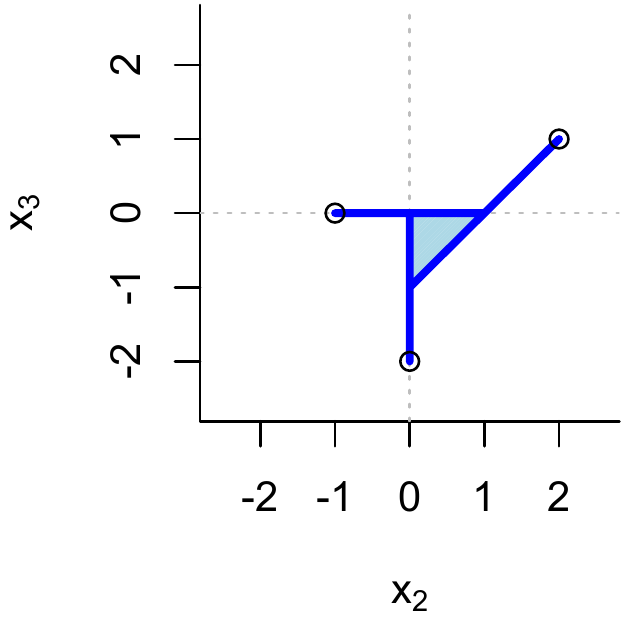}
\caption{The tropical polytope of three points $(0, 2, 1), \, (0, -1, 0), \, (0, 0, -2)$ in $\mathbb R^3 \!/\mathbb R {\bf 1}$ is colored light blue. The three vertices are denoted by the black circles and the line segments connecting the vertices are emphasized by the thick blue lines. Example~\ref{ex:counter_ex} shows when using Algorithm~\ref{alg:HAR_extrapolation}, we can construct line segments $\Gamma_{A,B}$ but we cannot construct $\Gamma_{B,A}$ with certainty since $x_k=(0,\epsilon,0)$ and $x_{k+1}=(0,-\epsilon,0)$, for $0 \leq \epsilon \leq 1$, do not project to the same point.}
\label{fig:lambda3}
\end{figure}

Combining Proposition \ref{pro:connectivity} and Proposition \ref{prop:symmetric_proposal_distribution} guarantees the uniform sampling distribution from a polytrope that is defined as a tropical polytope that is also convex in the ordinary sense.
\begin{theorem}
Algorithm \ref{alg:HAR_extrapolation} samples points uniformly from a (full-dimensional) tropical polytrope $\mathcal{P} = \tconv(v^1, \ldots , v^e) \subset \mathbb{R}^e / \mathbb{R} {\bf 1}$.
\end{theorem}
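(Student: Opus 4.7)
The plan is to invoke the standard Metropolis/symmetric-proposal MCMC framework: if the proposal kernel is symmetric, irreducible, and aperiodic on $\mathcal{P}$, then the uniform distribution is the unique stationary distribution, and since Algorithm~\ref{alg:HAR_extrapolation} accepts every proposal, its limiting distribution is exactly uniform. The two ingredients needed are already essentially in place: Proposition~\ref{pro:connectivity} gives irreducibility, and Proposition~\ref{prop:symmetric_proposal_distribution} gives symmetry \emph{provided} the line segment between $x_k$ and $x_{k+1}$ chosen by the algorithm is unique. The real content of the theorem, therefore, is to verify that the polytrope hypothesis forces this uniqueness almost everywhere.

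First I would recall that a \emph{polytrope} is a tropical polytope which is also classically convex, and under the hypothesis $s=e$ it is full-dimensional in $\mathbb{R}^e/\mathbb{R}\mathbf{1}$. I would then argue that for almost every pair $(x_k,i)$ with $x_k\in\mathcal{P}$ and $v^i$ a chosen vertex, the extrapolated point $\pi_{\mathcal{P}^{-i}}(x_k)$ lies in the relative interior of a maximal cell of the tropical complex structure of $\mathcal{P}$, so the segment $\Gamma_{v^i,\pi_{\mathcal{P}^{-i}}(x_k)}$ passes through $x_k$ transversally. In the polytrope case the pathologies shown in Example~\ref{ex:counter_ex} (where $\pi_{\mathcal{P}^{-i}}(x_k)\neq \pi_{\mathcal{P}^{-i}}(x_{k+1})$ for some $x_{k+1}$ on the segment) occupy only a measure-zero subset, because classical convexity of $\mathcal{P}$ prevents the line-segment map from folding back onto lower-dimensional strata. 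Thus the line segment through any two generic points $A,B\in\mathcal{P}$ used by the algorithm is unique, and Proposition~\ref{prop:symmetric_proposal_distribution} yields $P(A,B)=P(B,A)$ on a full-measure subset.

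Next I would combine symmetry with Lemma~\ref{lm:prob}, which shows that conditional on choosing a particular vertex $v^i$ and hence a particular line segment $\Gamma_{v^i,\pi_{\mathcal{P}^{-i}}(x_k)}$, the sampled point is uniform along that segment with density $1/d_{\rm tr}(v^i,\pi_{\mathcal{P}^{-i}}(x_k))$. Combined with the uniform choice of $i\in\{1,\ldots,e\}$ in the algorithm, the transition kernel $P(\cdot,\cdot)$ has a symmetric density with respect to the natural tropical volume measure on $\mathcal{P}$. By detailed balance, the uniform measure on $\mathcal{P}$ is stationary. Irreducibility (Proposition~\ref{pro:connectivity}) together with aperiodicity—which follows because from any interior point there is positive probability of remaining in any neighborhood after one step, since the sampled $\ell$ is drawn from an interval of positive length—gives uniqueness of the stationary distribution. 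Therefore the chain converges to the uniform distribution as $I\to\infty$.

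The main obstacle is the uniqueness of the line segment used in Proposition~\ref{prop:symmetric_proposal_distribution}; Example~\ref{ex:counter_ex} shows this can fail in general tropical polytopes, so the polytrope hypothesis must be used essentially. The cleanest route is to appeal to the known fact (see \cite{MS}) that the tropical projection $\pi_{\mathcal{P}^{-i}}$ restricted to a polytrope is piecewise-linear and bijective onto a full-dimensional face-fan, so the ``two different line segments through the same pair of points'' phenomenon is confined to a lower-dimensional singular locus of measure zero and can be ignored for the purpose of showing convergence in total variation to the uniform distribution.
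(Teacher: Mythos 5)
Your overall skeleton (irreducibility from Proposition~\ref{pro:connectivity}, symmetry of the proposal from Proposition~\ref{prop:symmetric_proposal_distribution}, detailed balance, then convergence) matches the paper's, and you correctly locate where the real content lies: showing that the polytrope hypothesis rules out the non-reversibility exhibited in Example~\ref{ex:counter_ex}. But that is exactly the step you do not actually prove. Your two justifications do not hold up. The claim that ``classical convexity of $\mathcal{P}$ prevents the line-segment map from folding back onto lower-dimensional strata'' is an assertion, not an argument; nothing in your write-up connects ordinary convexity to the property that is actually needed, namely that if $B$ lies on $\Gamma_{v^i,\pi_{\mathcal{P}^{-i}}(A)}$ then $\pi_{\mathcal{P}^{-i}}(B)=\pi_{\mathcal{P}^{-i}}(A)$, so that the reverse move is offered with the same probability. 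And the ``known fact'' you attribute to \cite{MS} is misstated: $\pi_{\mathcal{P}^{-i}}$ maps the full-dimensional polytrope onto $\tconv(v^1,\ldots,v^{i-1},v^{i+1},\ldots,v^e)$, a tropical polytope on $e-1$ vertices of strictly smaller dimension, so it cannot be ``bijective onto a full-dimensional face-fan''; there is no such statement to appeal to. The paper closes the gap with a concrete structural fact instead: by Theorem 15 of Develin--Sturmfels \cite{DS}, a full-dimensional polytrope on $e$ vertices is specified by a type $S$, so for every $x\in\mathcal{P}$ the minimum in $\lambda_l=\min(x-v^l)$ is attained at the fixed coordinate $S(l)$, with $S(1),\ldots,S(e)$ distinct and covering $\{1,\ldots,e\}$; hence $x\mapsto\lambda$ is one-to-one on $\mathcal{P}$, the extrapolated point is constant along the segment through $v^i$, and the segment connecting any $A,B$ that the algorithm can propose is unique. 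This yields exact symmetry for every pair of points, not merely almost everywhere.

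Even granting an almost-everywhere version of symmetry, your convergence argument has a further unaddressed wrinkle: unlike classical HAR, the proposal kernel here is supported on finitely many one-dimensional segments through the current point, so it is singular with respect to the volume measure on $\mathcal{P}$. ``Symmetry off a null set'' does not automatically combine with such a singular kernel into detailed balance for the uniform measure; you would have to argue that the chain never charges the exceptional set, or pass to a multi-step kernel with an absolutely continuous component, neither of which you do. The paper avoids this issue entirely because its type-decomposition argument gives symmetry everywhere on the polytrope. If you want to salvage your route, the missing lemma you must prove is precisely the constancy of $\pi_{\mathcal{P}^{-i}}$ along the proposed segments in a full-dimensional polytrope with $e$ vertices; once that is established exactly, the measure-zero apparatus becomes unnecessary.
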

\begin{proof}
According to Theorem 15 in~\citep{DS}, every polytrope (obtained as a result of the decomposition of a tropical polytope) is specified by a type $S$ as an intersection of sectors of tropical hyperplanes whose apices are the vertices of the original tropical polytope, as stated in~\citep{polytrope}.
Thus, for $x \in \mathcal{P}$, $\lambda_l = {\rm min}(x-v^l) = x_{S(l)} - v^l_{S(l)}$, where the minimum is attained only at $S(l)$.
As $S(i) \neq S(j)$ for $i \neq j$ and $S(1)\cup \ldots \cup S(e) = \{1, \ldots , e\}$, $x$ and $\lambda$ (or $\pi_{\mathcal{P}}$) are in one-to-one correspondence.
Suppose $A, B \in \mathcal{P}$.
Then, $\pi_{\mathcal{P}^{-i}}(A) \neq \pi_{\mathcal{P}^{-i}}(B)$ if and only if $v_i$, $A$ and $B(\neq A)$ are not on the same tropical line segment.
Thus when you can go from $A$ to $B$ via the line segment $\Gamma_{v_i,\pi_{\mathcal{P}^{-i}}(A)}$ in Algorithm \ref{alg:HAR_extrapolation}, you can also go from $B$ to $A$ via the same line segment.
Furthermore it is the unique line segment that connects $A$ and $B$ directly.
\end{proof}

Extrapolation does not necessarily require a vertex $v^i$ as an endpoint.
We can instead choose a subset $U \subset \{v^1,...,v^s \}$ and denote the projection to $\tconv(U)$ by $\pi_{\tconv (U)}$
and the projection to $\tconv(\{v^1,...,v^s \} \setminus U)$ by $\pi_{\tconv(\{v^1,...,v^s \} \setminus U)}$.
\textbf{\begin{lemma}
A point $x \in \mathcal{P}$ is on the line segment between $\pi_{\tconv (U)}(x)$ and $\pi_{\tconv(\{v^1,...,v^s \} \setminus U)}(x)$.
\end{lemma}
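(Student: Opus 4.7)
The plan is to generalize the proof of the preceding lemma (which handled the special case when $U = \{v^i\}$) by splitting the tropical sum defining $x$ according to the partition $U$ versus its complement. The only substantive input needed is that the tropical projection $\pi_\mathcal{P}$ is the identity on $\mathcal{P}$.

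First, since $x \in \mathcal{P}$, I would invoke the fact (already recorded in Definition~\ref{def:proj}) that $\pi_\mathcal{P}(x) = x$, which gives the explicit representation
\[
x \;=\; \bigoplus_{l=1}^{s} \lambda_l \odot v^l, \qquad \lambda_l \;=\; \min(x - v^l).
\]
Next, write $W := \{v^1,\ldots,v^s\} \setminus U$ and split the tropical sum along the partition $\{U,W\}$:
\[
x \;=\; \Bigl(\bigoplus_{v^l \in U} \lambda_l \odot v^l\Bigr) \,\oplus\, \Bigl(\bigoplus_{v^l \in W} \lambda_l \odot v^l\Bigr).
\]
By Definition~\ref{def:proj} applied to the sub-polytopes $\tconv(U)$ and $\tconv(W)$, and because the scalars $\lambda_l = \min(x - v^l)$ in \eqref{eq:tropproj2} depend only on $x$ and $v^l$ (not on which enclosing polytope we are projecting onto), the two parenthesized expressions are exactly $\pi_{\tconv(U)}(x)$ and $\pi_{\tconv(W)}(x)$.

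Finally, I would conclude by observing that the resulting identity
\[
x \;=\; 0 \odot \pi_{\tconv(U)}(x) \,\oplus\, 0 \odot \pi_{\tconv(W)}(x)
\]
exhibits $x$ as a tropical linear combination of the two projections, and hence places $x$ on the tropical line segment between them by Definition~\ref{def:polytope}. There is no real obstacle here beyond being careful that the $\lambda_l$ coming from projecting $x$ onto $\mathcal{P}$ agree with those appearing in the projections onto $\tconv(U)$ and $\tconv(W)$; this reduces to the fact that the formula \eqref{eq:tropproj2} depends only on the ambient vertex set one restricts to, so no recomputation of the coefficients is required.
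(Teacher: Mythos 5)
Your proof is correct and follows essentially the same route as the paper: the paper also exhibits $x$ as the tropical combination of the two projections with both coefficients equal to $0$, and your splitting of $\pi_\mathcal{P}(x)=x=\bigoplus_l \lambda_l\odot v^l$ along $U$ and its complement simply supplies the justification the paper leaves implicit.
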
}
\begin{proof}
The line segment connecting $\pi_{\tconv (U)}(x)$ and $\pi_{\tconv(\{v^1,...,v^s \} \setminus U)}(x)$ is represented as $a_U \odot \pi_{\tconv (U)}(x) \oplus a_{-U} \odot \pi_{\tconv(\{v^1,...,v^s \} \setminus U)}(x)$ where $a_U, a_{-U} \in \mathbb{R}$.
$x$ is obtained when $a_U = a_{-I} = 0$.
\end{proof}
Again, we can regard $\pi_{\tconv (U)}(x)$ and $\pi_{\tconv(\{v^1,...,v^s \} \setminus U)}(x)$ as ``extrapolated'' points.
Algorithm \ref{alg:HAR_extrapolation2} samples uniformly from the ``extrapolated'' line segment connecting them with $x$ as an initial point.

\begin{algorithm}[H]
\caption{Vertex HAR Sampling from $\mathcal{P}$ utilizing extrapolation with unfixed $\nu$} \label{alg:HAR_extrapolation2}
\begin{algorithmic}
\State {\bf Input:} Tropical polytope $\mathcal{P}:=\tconv(v^1, \ldots, v^s)$ and an initial point $x_0 \in \mathcal{P}$ and maximum iteration $I \geq 1$.
\State {\bf Output:} A random point $x \in \mathcal{P}$.
\State Set $k = 0$.
\For{$k= 0, \ldots , I-1$,}
\State Randomly select a non-empty set $U^k$ such that $U^k \subset \{v^1,...,v^s \}$ the vertex set of $\mathcal{P}$.
\State Generate a random point $x_{k+1}$ from a tropical line segment $\Gamma^k_{\pi_{\tconv (U)} (x_k), \pi_{\tconv(\{v^1,...,v^e \} \setminus U)} (x_k)}$ using Algorithm \ref{eq:troline}.
\EndFor \\
\Return $x := x_{I}$.
\end{algorithmic}
\end{algorithm}

Since Algorithm \ref{alg:HAR_extrapolation2} is a generalization of Algorithm \ref{alg:HAR_extrapolation}, we have the following lemma.
\begin{proposition}\label{pro:connectivity2}
Any two points $u, v \in \mathcal{P} = \tconv(v^1, \ldots , v^s) \subset \mathbb{R}^e / \mathbb{R} {\bf 1}$ are connected via Algorithm \ref{alg:HAR_extrapolation2}.
\end{proposition}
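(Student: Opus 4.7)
The plan is to reduce Proposition~\ref{pro:connectivity2} directly to Proposition~\ref{pro:connectivity} by exhibiting Algorithm~\ref{alg:HAR_extrapolation} as a special case of Algorithm~\ref{alg:HAR_extrapolation2}. The key observation is that Algorithm~\ref{alg:HAR_extrapolation2} generalizes the earlier algorithm by allowing an arbitrary nonempty subset $U \subset \{v^1,\ldots,v^s\}$ in place of a single vertex, so any transition realizable under Algorithm~\ref{alg:HAR_extrapolation} should correspond to a particular choice of $U$.

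More concretely, the first step is to note that for a singleton $U = \{v^i\}$ we have $\tconv(\{v^i\}) = \{v^i\}$ in $\mathbb{R}^e/\mathbb{R}\mathbf{1}$, since any tropical scalar multiple $a \odot v^i$ is identified with $v^i$ modulo $\mathbb{R}\mathbf{1}$. Consequently, $\pi_{\tconv(\{v^i\})}(x_k) = v^i$. Simultaneously, $\tconv(\{v^1,\ldots,v^s\}\setminus\{v^i\}) = \mathcal{P}^{-i}$, so $\pi_{\tconv(\{v^1,\ldots,v^s\}\setminus\{v^i\})}(x_k) = \pi_{\mathcal{P}^{-i}}(x_k)$. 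Therefore, the tropical line segment sampled by Algorithm~\ref{alg:HAR_extrapolation2} with $U = \{v^i\}$ is precisely
\[
\Gamma^k_{\pi_{\tconv(U)}(x_k),\,\pi_{\tconv(\{v^1,\ldots,v^s\}\setminus U)}(x_k)} \;=\; \Gamma^k_{v^i,\,\pi_{\mathcal{P}^{-i}}(x_k)},
\]
which is exactly the segment used in Algorithm~\ref{alg:HAR_extrapolation}.

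The second step is to conclude: given $u, v \in \mathcal{P}$, Proposition~\ref{pro:connectivity} provides a finite sequence of moves under Algorithm~\ref{alg:HAR_extrapolation} connecting $u$ to $v$. Each such move, being determined by a choice of vertex $v^i$ and a point on $\Gamma_{v^i,\pi_{\mathcal{P}^{-i}}(x_k)}$, can be reproduced by Algorithm~\ref{alg:HAR_extrapolation2} by selecting $U^k = \{v^i\}$ at the corresponding iteration and sampling the same point. Hence $u$ and $v$ are connected under Algorithm~\ref{alg:HAR_extrapolation2}.

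I do not foresee a substantive obstacle: the main subtlety is simply verifying that the singleton case of $U$ in Algorithm~\ref{alg:HAR_extrapolation2} genuinely coincides with the vertex case of Algorithm~\ref{alg:HAR_extrapolation}, which hinges on the identification $\tconv(\{v^i\}) = \{v^i\}$ in the tropical projective torus. Once that identification is recorded, the result is immediate from Proposition~\ref{pro:connectivity}.
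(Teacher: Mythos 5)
Your proof is correct and follows essentially the same route as the paper: the paper's own argument also reduces to Proposition~\ref{pro:connectivity} by observing that Algorithm~\ref{alg:HAR_extrapolation2} subsumes the moves of Algorithm~\ref{alg:HAR_extrapolation}. Your explicit verification that the singleton choice $U=\{v^i\}$ reproduces the segment $\Gamma_{v^i,\pi_{\mathcal{P}^{-i}}(x_k)}$ (via $\tconv(\{v^i\})=\{v^i\}$ in $\mathbb{R}^e/\mathbb{R}\mathbf{1}$) simply makes precise what the paper states as ``Algorithm~\ref{alg:HAR_extrapolation2} has more routes.''
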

\begin{proof}
Algorithm \ref{alg:HAR_extrapolation2} has more routes than Algorithm \ref{alg:HAR_extrapolation}.
Therefore, if $u$ and $v$ are connected via Algorithm \ref{alg:HAR_extrapolation}, they are also connected via Algorithm \ref{alg:HAR_extrapolation2}.
\end{proof}

\begin{proposition}\label{prop:symmetric_proposal_distribution2}
Let $\mathcal{P}$ be a tropical polytope with a set of vertices $\{v^1, \ldots , v^s\}$ $\subset \mathbb{R}^e / \mathbb{R} {\bf 1}$.
Let $x_1 \in \mathcal{P}$ be sampled by Algorithm \ref{alg:HAR_extrapolation} with the initial point $x_0 \in \mathcal{P}$ and $I = 1$.
Then, if there is a unique line segment connecting them that can be sampled by Algorithm \ref{alg:HAR_extrapolation2} both with $x_0$ and $x_1$ as an initial point, the proposal distribution $P(x_0, x_1)$ is symmetric.
That is, the proposal distribution from $x_0$ to $x_1$, $P(x_0,x_1)$, and that from $x_1$ to $x_0$, $P(x_1,x_0)$ are equal.
\end{proposition}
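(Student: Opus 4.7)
The plan is to mimic the proof of Proposition~\ref{prop:symmetric_proposal_distribution}, replacing the single-vertex extrapolation endpoints $v^i$ and $\pi_{\mathcal{P}^{-i}}(x)$ with the subset-based endpoints $\pi_{\tconv(U)}(x)$ and $\pi_{\tconv(\{v^1,\ldots,v^s\}\setminus U)}(x)$, and then summing over all subsets $U$ that realize the unique line segment through $x_0$ and $x_1$. The uniqueness hypothesis will guarantee that the densities to compare are in fact one-to-one, so everything reduces to showing that the two projection endpoints, and hence the uniform density on the resulting tropical line segment, are identical whether one starts from $x_0$ or from $x_1$.

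First I would fix the unique tropical line segment $\Gamma_{A,B}$ guaranteed by the hypothesis and consider any non-empty subset $U \subset \{v^1,\ldots,v^s\}$ such that the pair $\bigl(\pi_{\tconv(U)}(x_0),\ \pi_{\tconv(\{v^1,\ldots,v^s\}\setminus U)}(x_0)\bigr)$ produces exactly $\Gamma_{A,B}$. For each such $U$, I would use the fact that any point on a tropical line segment between two projections must project back to the same pair (this is the content of the preceding lemma: $x$ lies on the segment between $\pi_{\tconv(U)}(x)$ and $\pi_{\tconv(\{v^1,\ldots,v^s\}\setminus U)}(x)$, and along that segment the $\lambda_l = \min(\cdot - v^l)$ values remain constant in the appropriate sectors, exactly as in Lemma~\ref{lm:project}). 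This yields
\[
\pi_{\tconv(U)}(x_0) = \pi_{\tconv(U)}(x_1), \qquad
\pi_{\tconv(\{v^1,\ldots,v^s\}\setminus U)}(x_0) = \pi_{\tconv(\{v^1,\ldots,v^s\}\setminus U)}(x_1),
\]
so the same subset $U$ is admissible whether the chain starts at $x_0$ or $x_1$, and it produces the same line segment $\Gamma_{A,B}$ in both cases.

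Next I would express the one-step proposal density as a sum over such admissible $U$. Writing $p(U)$ for the probability that Algorithm~\ref{alg:HAR_extrapolation2} selects $U$ (a quantity depending only on $s$, not on the current state), and using Lemma~\ref{lm:prob} for the uniform density $1/d_{\rm tr}(A,B)$ along $\Gamma_{A,B}$, I get
\[
P(x_0,x_1) \;=\; \sum_{U\,\text{admissible}} p(U)\cdot \frac{1}{d_{\rm tr}(A,B)}
\;=\; P(x_1,x_0),
\]
where the equality of the two sums follows from the endpoint identity above: each $U$ appears in both sums with identical weight. This is exactly the argument of Proposition~\ref{prop:symmetric_proposal_distribution}, generalized by summing over subsets rather than over single vertices.

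The main obstacle will be the uniqueness assumption in conjunction with the possibility that several different choices of $U$ could yield the same segment $\Gamma_{A,B}$ (for example, $U$ and its complement trivially give the same unordered segment, and if $x_0$ and $x_1$ lie in a lower-dimensional face, many subsets may collapse to the same projection pair). I would handle this by invoking the uniqueness hypothesis to rule out distinct segments, but carefully allowing multiple $U$'s to contribute to the \emph{same} segment; the key point is that the set of admissible $U$'s is determined by $\Gamma_{A,B}$ alone, hence symmetric in $x_0,x_1$. The counter-example flagged in Example~\ref{ex:counter_ex} for Algorithm~\ref{alg:HAR_extrapolation}, where asymmetric projections destroy reversibility, is precisely what the uniqueness hypothesis in the statement is designed to exclude, so no additional subtlety arises beyond carefully bookkeeping the admissible subsets.
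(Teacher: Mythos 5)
Your argument is essentially the paper's: the paper proves this proposition simply by repeating the proof of Proposition~\ref{prop:symmetric_proposal_distribution}, replacing the selection probability by $p(\Gamma_{A,B}\mid A)=p(\Gamma_{A,B}\mid B)=\frac{2}{2^s-2}$ (the subset $U$ and its complement among the nonempty proper subsets) and using the same uniform density $1/d_{\rm tr}(A,B)$ along the unique segment, with the endpoint identity $\pi(A)=\pi(B)$ asserted from the uniqueness hypothesis exactly as you do. Your summation over all admissible subsets $U$ is only a slightly more careful bookkeeping of that same state-independent selection probability, so the two proofs coincide in substance.
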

\begin{proof}
The same as the proof for Proposition \ref{prop:symmetric_proposal_distribution}, but with $p(\Gamma_{A,B}|A) = p(\Gamma_{A,B}|B) = \frac{2}{2^s-2}$.
\end{proof}

Using Proposition \ref{pro:connectivity2} and \ref{prop:symmetric_proposal_distribution2}, we have the following theorem.
\begin{theorem}
Algorithm \ref{alg:HAR_extrapolation2} samples points uniformly from a (full-dimensional) tropical polytrope $\mathcal{P} = \tconv(v^1, \ldots , v^e) \subset \mathbb{R}^e / \mathbb{R} {\bf 1}$.
\end{theorem}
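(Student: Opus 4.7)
The plan is to mirror the proof of the previous theorem for Algorithm \ref{alg:HAR_extrapolation}, now invoking Propositions \ref{pro:connectivity2} and \ref{prop:symmetric_proposal_distribution2} together with the polytrope structure. Recall that a polytrope is simultaneously tropically and ordinarily convex; by Theorem 15 of \cite{DS}, $\mathcal{P}$ decomposes into sectors indexed by types $S$ of the tropical hyperplanes apexed at the vertices. Consequently, for every $x \in \mathcal{P}$, $\lambda_l = \min(x - v^l) = x_{S(l)} - v^l_{S(l)}$ with the minimum attained uniquely at $S(l)$, and the assignment $x \mapsto (\lambda_1, \ldots, \lambda_e)$ is a bijection modulo $\mathbb{R}{\bf 1}$. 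This one-to-one property is the key rigidity that I would exploit throughout.

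First, I would verify the hypothesis of Proposition \ref{prop:symmetric_proposal_distribution2} for all pairs $A, B \in \mathcal{P}$. Fix a nonempty proper subset $U \subset \{v^1, \ldots, v^e\}$. Using the bijectivity of $x \mapsto \lambda$ on the polytrope, I would show that if $B$ lies on the line segment $\Gamma^U_A := \Gamma_{\pi_{\tconv(U)}(A), \pi_{\tconv(\{v^1,\ldots,v^e\} \setminus U)}(A)}$, then $\pi_{\tconv(U)}(B) = \pi_{\tconv(U)}(A)$ and similarly for the complementary projection; hence $\Gamma^U_B = \Gamma^U_A$. This is where polytropehood is essential: it is exactly the polytrope condition that prevents the pathology of Example \ref{ex:counter_ex}, in which different initial points produce different candidate segments and thereby break the symmetry of the proposal.

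Next, combining Proposition \ref{pro:connectivity2} (the chain on $\mathcal{P}$ is irreducible) with Proposition \ref{prop:symmetric_proposal_distribution2} (the proposal density satisfies $P(x_0, x_1) = P(x_1, x_0)$) shows that the Markov chain defined by Algorithm \ref{alg:HAR_extrapolation2} is reversible with respect to the uniform measure on $\mathcal{P}$, since detailed balance with uniform target reduces precisely to proposal symmetry. Irreducibility plus reversibility with respect to uniform identifies the uniform distribution as the unique stationary distribution, so iterating the algorithm produces samples converging in distribution to uniform on $\mathcal{P}$.

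The main obstacle I anticipate is the bookkeeping in the first step: verifying that for every admissible $U$, the projections $\pi_{\tconv(U)}$ and $\pi_{\tconv(\{v^1,\ldots,v^e\}\setminus U)}$ are constant along $\Gamma^U_A$ in the polytrope case. This amounts to checking that the types $S(l)$ (for $l \in U$ and for $l \notin U$) are preserved as one moves along $\Gamma^U_A$, which follows from the uniqueness of the argmin in each sector but requires care at the interior break points of the tropical line segment where coordinates can tie. Once that invariance is in hand, the remaining assembly, via Metropolis-style reversibility, is routine.
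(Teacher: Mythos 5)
Your argument is essentially the paper's own: the paper disposes of this theorem simply by combining Proposition \ref{pro:connectivity2} (connectivity) with Proposition \ref{prop:symmetric_proposal_distribution2} (symmetric proposal), inheriting the polytrope/type argument (Theorem 15 of the cited work, uniqueness of the argmin $S(l)$, hence one-to-one correspondence between $x$ and $\lambda$ and uniqueness of the shared segment) from the proof of the preceding theorem for Algorithm \ref{alg:HAR_extrapolation}, and concluding uniformity by detailed balance as in Remark \ref{rem:dist}. Your explicit verification that $\pi_{\tconv(U)}$ and $\pi_{\tconv(\{v^1,\ldots,v^e\}\setminus U)}$ are constant along the proposal segment is precisely the step the paper leaves implicit, carried out at the same level of rigor (ties at break points and boundary points are glossed over there as well), so the proposal matches the paper's proof.
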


\begin{remark}
For the initial point $x_0 \in \mathcal{P}$, there are only finite ways ($s$-way in Algorithm \ref{alg:HAR_extrapolation} and $\frac{2^s-2}{2}$-way in Algorithm \ref{alg:HAR_extrapolation2}) to choose the line segments in HAR, because there are finite ways to divides $s$ vertices into two groups and every line segment is sampled equally.
Note that while you can choose any directions in classical HAR, in Algorithms \ref{alg:HAR_extrapolation} and \ref{alg:HAR_extrapolation2} you are allowed to choose only $s$ or $\frac{2^s-2}{2}$ directions.
This is never problematic as any random walk engine basically works for MCMC.
Rather, having finite choices makes it easy to impose equal probabilities to all the choices.
\end{remark}

\begin{remark}\label{rem:dist}
The stationary probability density function $p(x)$ should satisfy the detailed balance, $p(x=A) P(A,B) = p(x=B) P(B,A)$, for $A, B \in \mathcal{P}$. Thus $p(x) = c$ for some $c \geq 0$, which defines the uniform distribution.

When the stationary probability density function $p(x)$ that is not the uniform distribution is desired, you can use the Metropolis-Hasting algorithm with the acceptance probability $r = \min \left\{\frac{p(x_{k+1})}{p(x_k)}, 1\right\}$ for the point $x_{k+1}$ that is sampled according to the proposal distribution $P(x_{k},x_{k+1})$.
\end{remark}
We utilize Algorithms~\ref{alg:HAR_extrapolation} on a polytope $\mathcal{P}$ described in Example 3.15.  Sampling results are shown in Figure~\ref{fig:Ex_Ex}.

\begin{figure}[H]
    \centering
    \includegraphics[width=0.44\textwidth]{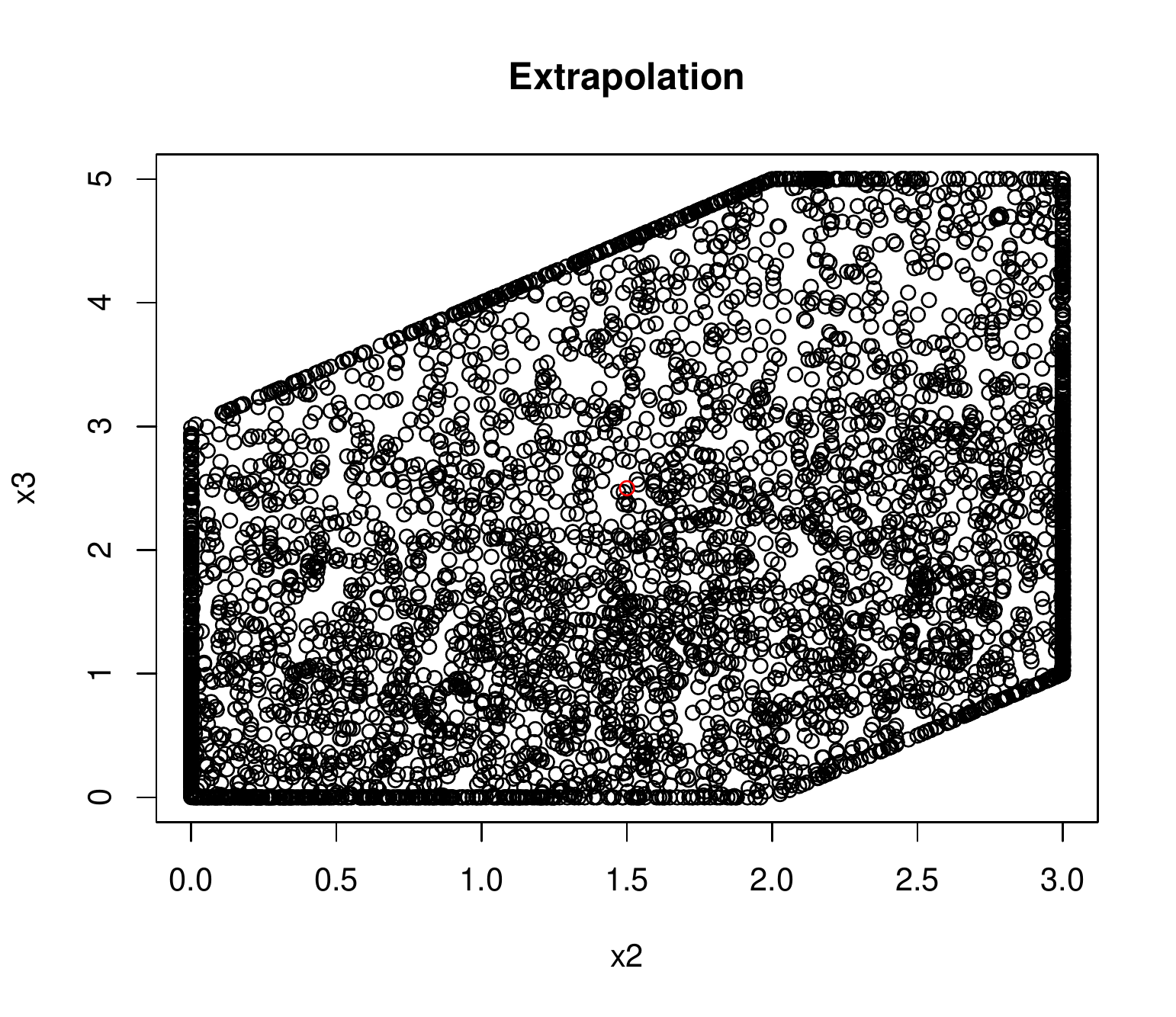}~
    \includegraphics[width=0.44\textwidth]{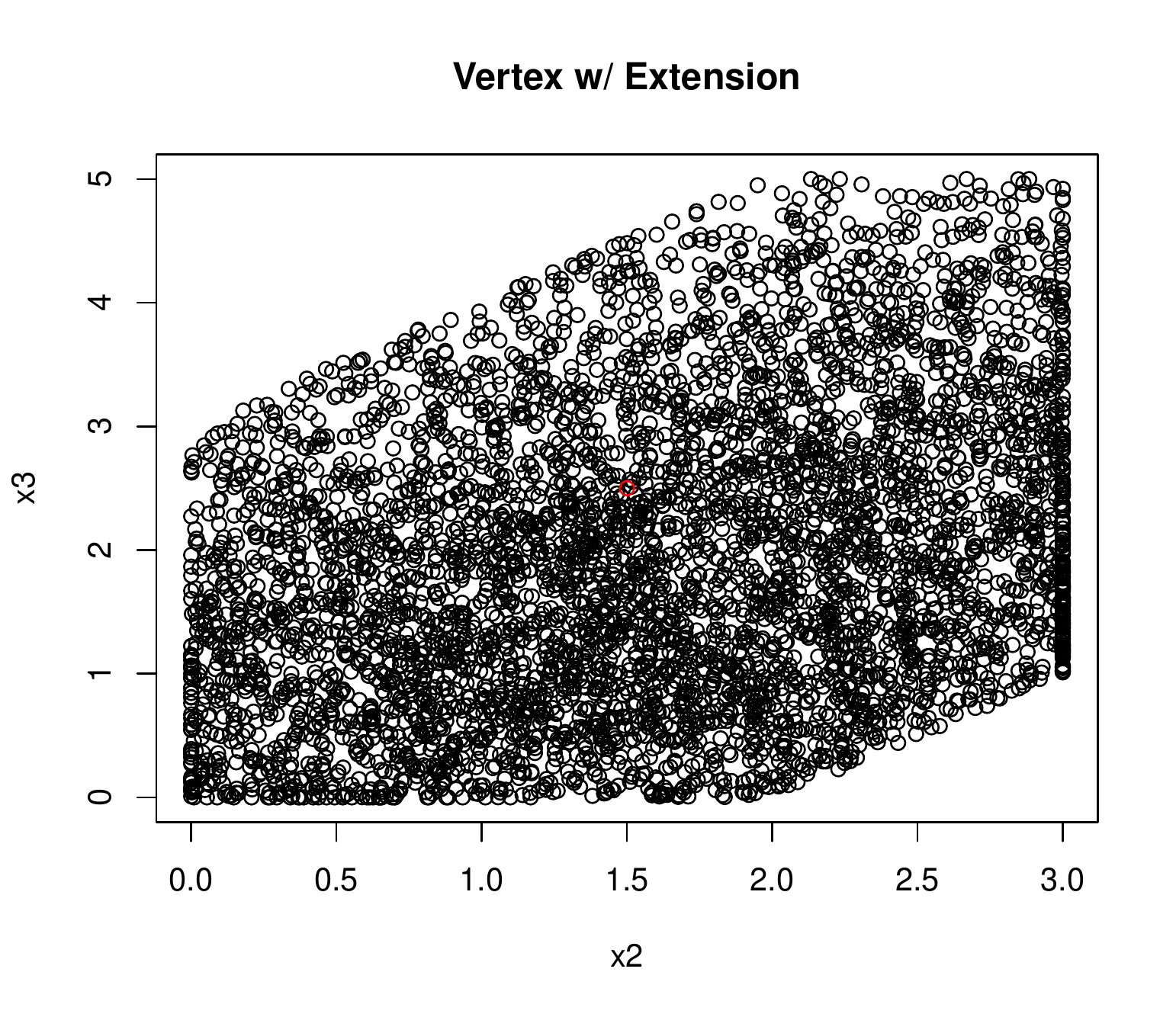}
    \caption{The result from comparative experiments using Algorithms \ref{alg:HAR_extrapolation2} (left) \ref{alg:HAR_vert3} (right) sampling 5,000 points each from a tropical polytope $\mathcal{P}$.  For these experiments we used a ``burn-in'' value of $1,000$ and $I=500$ for each algorithm.  For Algorithm~\ref{alg:HAR_vert3} we allowed a line extension scalar, $d=10$, ensuring any line segment extended beyond the edges of $\mathcal{P}$. The {\color{red}red} point is the starting point in each case.}
    \label{fig:Ex_Ex}
\end{figure}


Observe that for the extrapolation results, sampled points often occur on the boundaries of $\mathcal{P}$ and, similarly to Algorithm~\ref{alg:HAR_vert2}, sampling in the upper portion of the polytope is sparser than the lower portion.  
Using the extension algorithm (Algorithm~\ref{alg:HAR_vert3}), results suggest sampling is less concentrated on the boundaries of $\mathcal{P}$ (though still present in certain areas) compared with extrapolation results (Figure~\ref{fig:Ex_Ex} (right)).  Further, while sampling occurs more often in the lower portion of the polytope than the upper, it the areas are much less sparse than in the extrapolation results.

\subsubsection{Sampling from a Tropical Polytope Using a Given Distribution}

Sampling according to a specific distribution is useful in many circumstances for classical polytopes.  In this section, we propose an HAR algorithm which samples from a tropical polytope that mimics other MCMC algorithms that sample from classical polytopes according to a normal distribution, $N(0,\sigma)$. Remark~\ref{rem:dist} shows how this can be done in the general case.

The normal distribution utilizes the idea of a Euclidean ball, $B_r(x)$, around a point $x\in \mathbb{R}^e$ with radius, $r>0$ which is defined as,

\[
B_r(x) = \{y \in \mathbb{R}^e\; |\; ||x-y||_2 \leq r\}.
\]

\noindent The radius of this ball is defined by the standard deviation, $\sigma$, where the majority of points sampled according to $N(0,\sigma)$ fall within this ball.

We can use the idea of a ball in tropical space to develop an analogous sampling method to that used when sampling from a normal distribution in Euclidean space.  A tropical ball, $B_r(x)_{tr}$, around $x \in \mathbb{R}^e/\mathbb{R}{\bf 1}$ with a radius $r > 0$ is defined as follows:
\[
B_r(x)_{tr} = \{y \in \mathbb{R}^e/\mathbb{R}{\bf 1} | d_{\rm tr}(x, y) \leq r\}
\]

\noindent where $d_{\rm tr}(x, y)$ is the tropical distance between a point $y$ and a center of mass point $x$.
\begin{example}
Suppose we have $\mathbb{R}^3/\mathbb{R}{\bf 1}$ and let $\mu = (0, 1/2, 1/2) \in \mathbb{R}^e/\mathbb{R}{\bf 1}$ and let $r = 1/4$.  Then 
\[
B_{1/4}((0, 1/2, 1/2))_{tr} = \{y \in \mathbb{R}^3/\mathbb{R}{\bf 1} | d_{\rm tr}((0, 1/2, 1/2), y) \leq r\}
\]
and it is shown in Figure \ref{fig:tropball}.
\begin{figure}[H]
    \centering
    \includegraphics[width=0.8\textwidth]{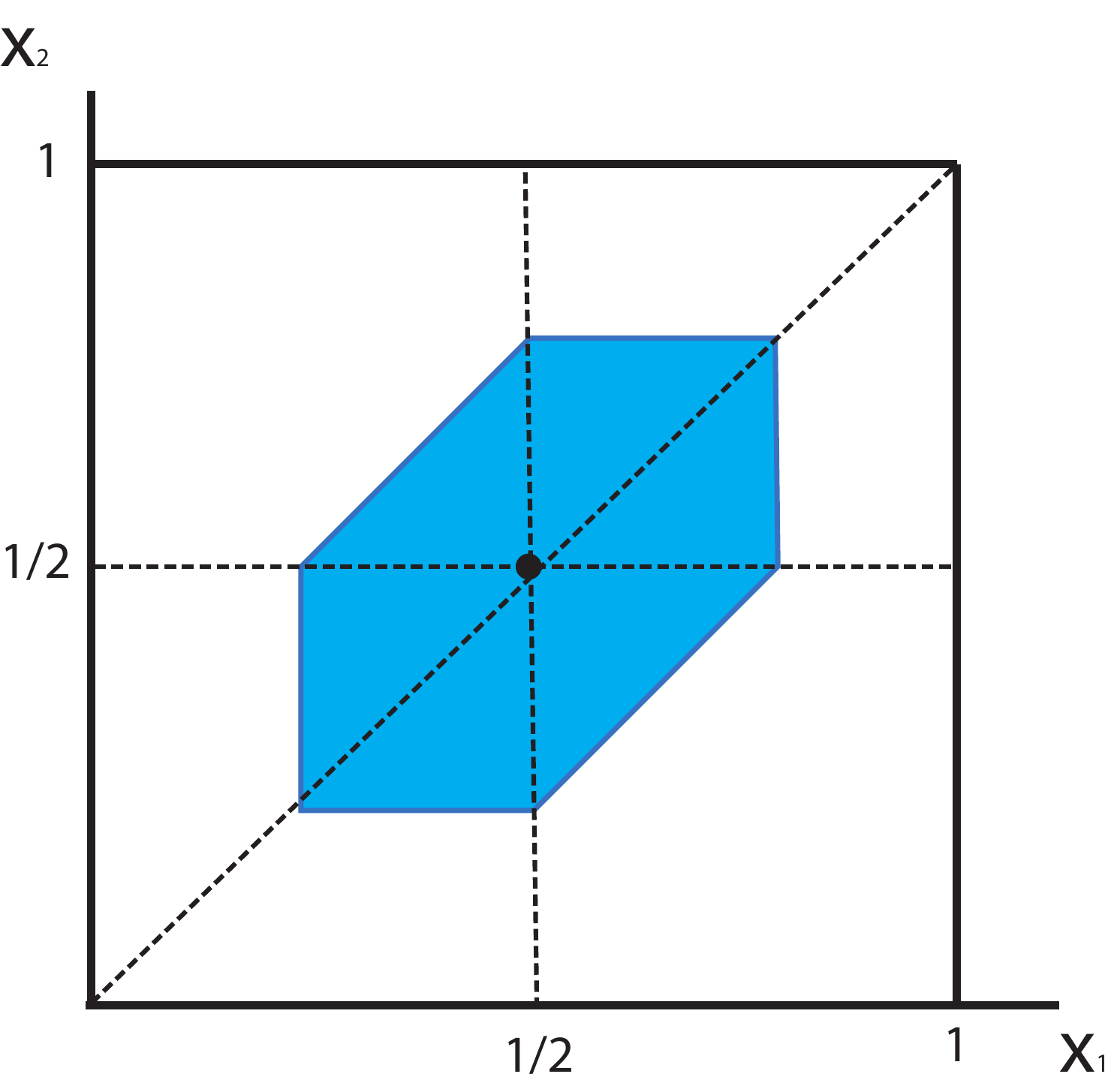}
    \caption{Tropical ball $B_{1/4}((0, 1/2, 1/2))_{tr}$.}
    \label{fig:tropball}
\end{figure}
\end{example}

Now we introduce a method to sample from a tropical polytope that mimics Gaussian ($N(0,\sigma)$) sampling from a classical polytope.  First, we define a classical Gaussian sampler in Algorithm~\ref{alg:Gauss1}.

\begin{algorithm}[H]
\caption{Gaussian HAR sampling from a polytope $\mathcal{S}$}\label{alg:Gauss1}
\begin{algorithmic}
\State {\bf Input:} Initial point $x_0 \in \mathcal{S}$; center of mass point $\mu \;\in \mathcal{S}$; number of iterations, $I$; a variance, $\sigma^2$; a large scalar value $t\ge \max\limits_{x,y}||x-y||_2$ where $x,y\in \mathcal{S}$.
\State {\bf Output:} A random point $x \sim \mathcal{N}(0,\sigma)$ sampled from $S$.
\State Set $k = 0$.
\While{$k< I-1$}{
\State $k=k+1$
\State Generate a random direction $D_k$ uniformly distributed over the surface of a unit hypersphere centered around $x_k$. 
\State Define $\pi(\mu)_k$ as the projection of the point $\mu$ onto the line defined by $D_k \cdot t$.

\State Generate $\lambda \sim \mathcal{N}(0,\sigma)$.

\State Generate a random point $x'$ from a line $L_k:= \{y \in S: y = \pi(\mu)_k + \lambda D'_k,\; \lambda \sim \mathcal{N}(0,\sigma)\}$.

\EndWhile} \\
\Return $x:=x_I$.
\end{algorithmic}
\end{algorithm}

Using Algorithm~\ref{alg:Gauss1}, we show an example of drawing $2,000$ sampled points from the vicinity of a central point $\mu=(1.5, 2.5)$ and $\sigma^2=3$ with no defined polytope.  The results are shown in Figure~\ref{fig:HAR_NORM}.

\begin{figure}[H]
    \centering
    \includegraphics[width=0.5\textwidth]{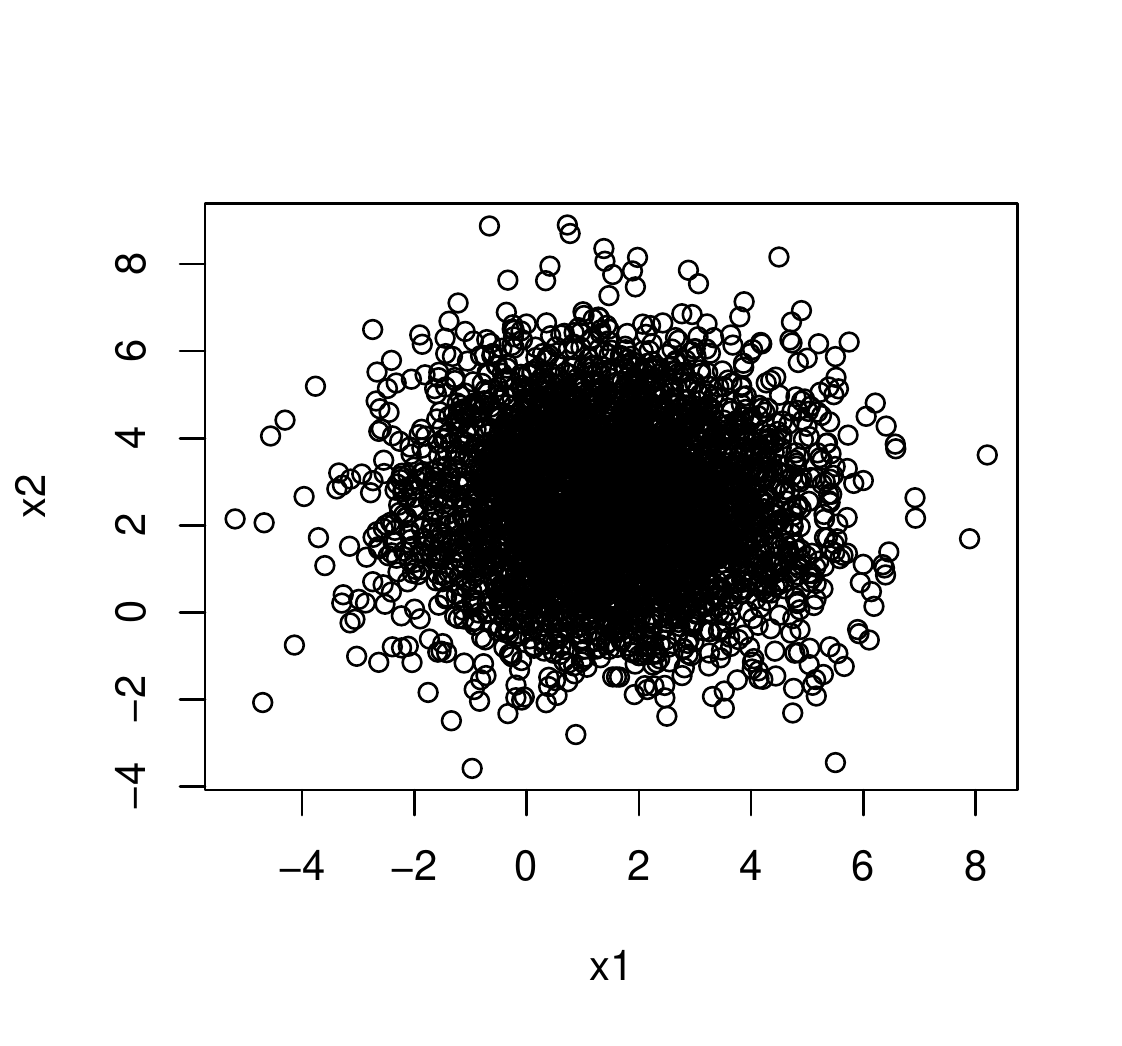}~
    \includegraphics[width=0.5\textwidth]{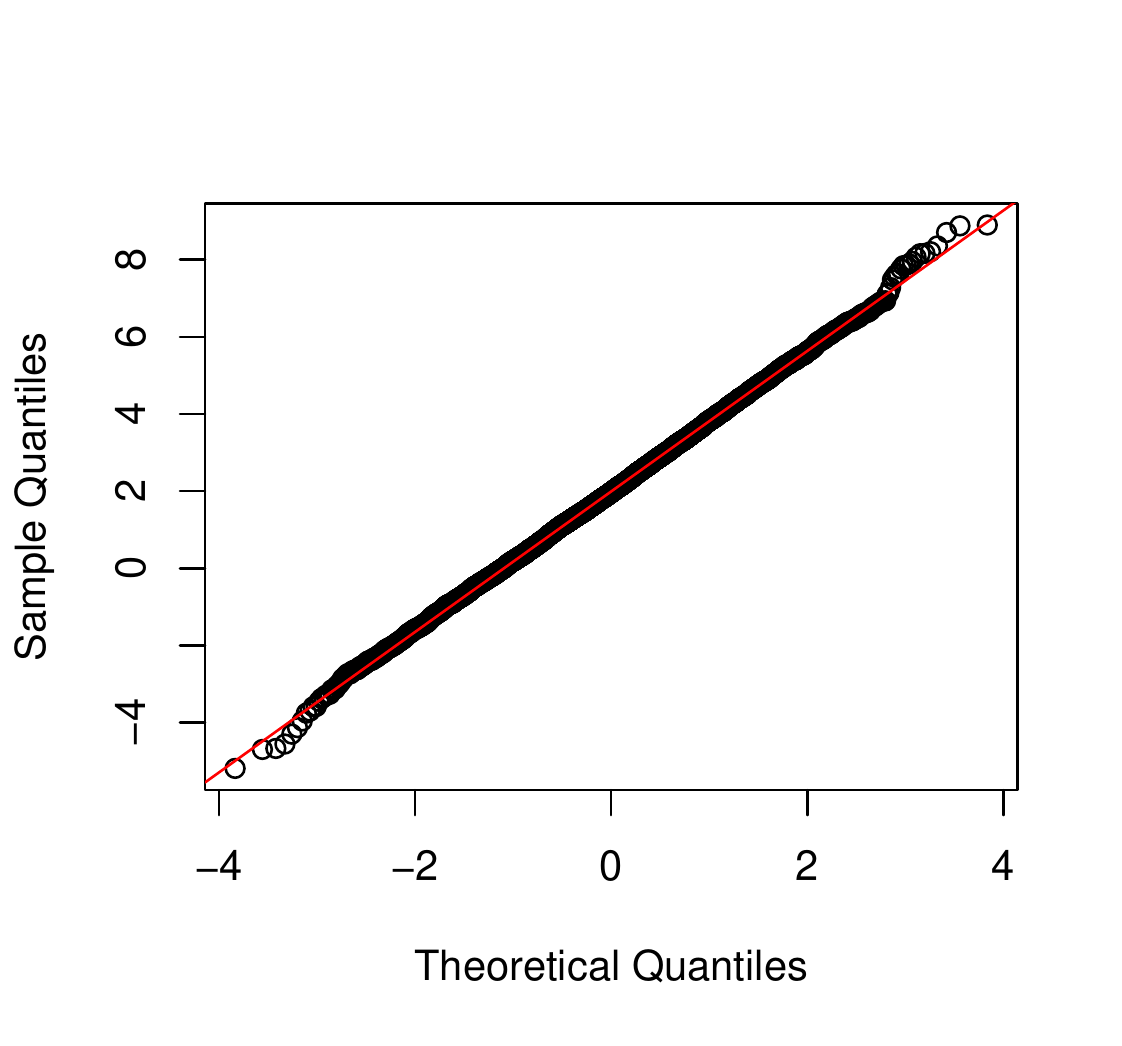}
    \caption{The result from an initial experiment for Algorithm \ref{alg:Gauss1} with maximum iteration value $I=50$ and variance, $\sigma^2=3$.  (Left) Results taken from $2,000$ samples. (Right) Resultant Quantile-Quantile plot to establish normality.}
    \label{fig:HAR_NORM}
\end{figure}

As expected we see the heavy concentration around the center of mass point, $\mu$.  The Quantile-Quantile plot demonstrates the Gaussian nature of the sampled points.

In a similar fashion we can sample from a tropical polytope where sampled points are concentrated around a center of mass point, $\mu$.  In both the classical and tropical cases, we use a parameter to define the a ball, $B_r(x)$ or $B_r(x)_{tr}$, respectively, which ultimately controls the dispersion of sampled points.  In the classical case, we are simply defining a standard deviation, $\sigma$.  In the tropical case, we define a tropical distance, which we define as $\sigma_{tr}$ which in turn defines a tropical ball, $B_r(x)_{tr}$ centered on $\mu$. In Figure~\ref{fig:tropball}, we see the structure of any $B_r(x)_{tr}$ in $\mathbb{R}^3 /\mathbb{R}\textbf{1}$.

It may be tempting to assume that in tropical space that sampled points concentrated around a center of mass point, $\mu$, are Gaussian in distribution.  However, this is not true given the structure of $B_r(x)_{tr}$.  For this and reasons related to tropical projections, we must leverage methods that differ from those used in Algorithm~\ref{alg:Gauss1}.

 To apply a HAR algorithm to a tropical polytope and concentrate sampling around a point $\mu$, we use the tropical metric over a tropical convex set $\mathcal{P}$ in $\mathbb{R}^e / \mathbb{R}{\bf 1}$. In addition, we combine this with Metropolis-Hastings filtering to sample from a distribution, $f(x)$, that is proportional to either of the following 
\begin{equation}\label{eq:gaus1}
    f(x)\propto \exp\left(\frac{-d_{\rm tr}(\mu, x)}{\sigma_{tr}}\right) dx
\end{equation}

\begin{equation}\label{eq:gaus2}
    f(x) \propto \exp\left(\frac{-d_{\rm tr}(\mu, x)^2}{\sigma_{tr}}\right) dx.
\end{equation}

 Algorithm~\ref{alg:COM} demonstrates how the tropical metric and Metropolis-Hastings filtering are employed to sample in the vicinity of a point $\mu$. While either~(\ref{eq:gaus1}) or~(\ref{eq:gaus2}) will work, we use~(\ref{eq:gaus2}) for reasons that will follow shortly.

\begin{algorithm}
\caption{Center of mass HAR sampling from a tropical polytope $S$}\label{alg:COM}
\begin{algorithmic}
\State {\bf Input:} Center of mass point $\mu\;\in \mathcal{P}$; $\sigma_{tr} \in \mathbb{R}$ defining a tropical ball $\mathcal{B}_{tr}$; sample size $n$.
\State {\bf Output:} A sample $\{x_1, \ldots , x_n\}$ sampled from $\mathcal{P}$ with distribution proportional to \eqref{eq:gaus1}.
\State Set $x_0 = \mu$.
\While{$i \leq n$}
\State Sample a proposal $x^*$ from $\mathcal{P}$ via a HAR sampler with the tropical metric. 
\State Set $r = \min \left\{\frac{\exp\left(\frac{-d_{\rm tr}(\mu, x^*)^2}{\sigma_{tr}}\right)}{\exp\left(\frac{-d_{\rm tr}(\mu, x_0)^2}{\sigma_{tr}}\right)}, 1\right\}$.
\State With probability $r$, accept the proposal, i.e., set $x_0 = x^*$, $x_i = x_0$, and set $i = i + 1$.
\EndWhile \\
\Return $\{x_1, \ldots , x_n\}$.
\end{algorithmic}
\end{algorithm}

Figure~\ref{fig:THAR_NORM} shows the results of an experiment with 3,000 sampled points using (\ref{eq:gaus1}) (left) and (\ref{eq:gaus2}) (right) in the Metropolis-Hasting filtering portion of Algorithm~\ref{alg:COM}. 

\begin{figure}[H]
    \centering
    \includegraphics[width=0.5\textwidth]{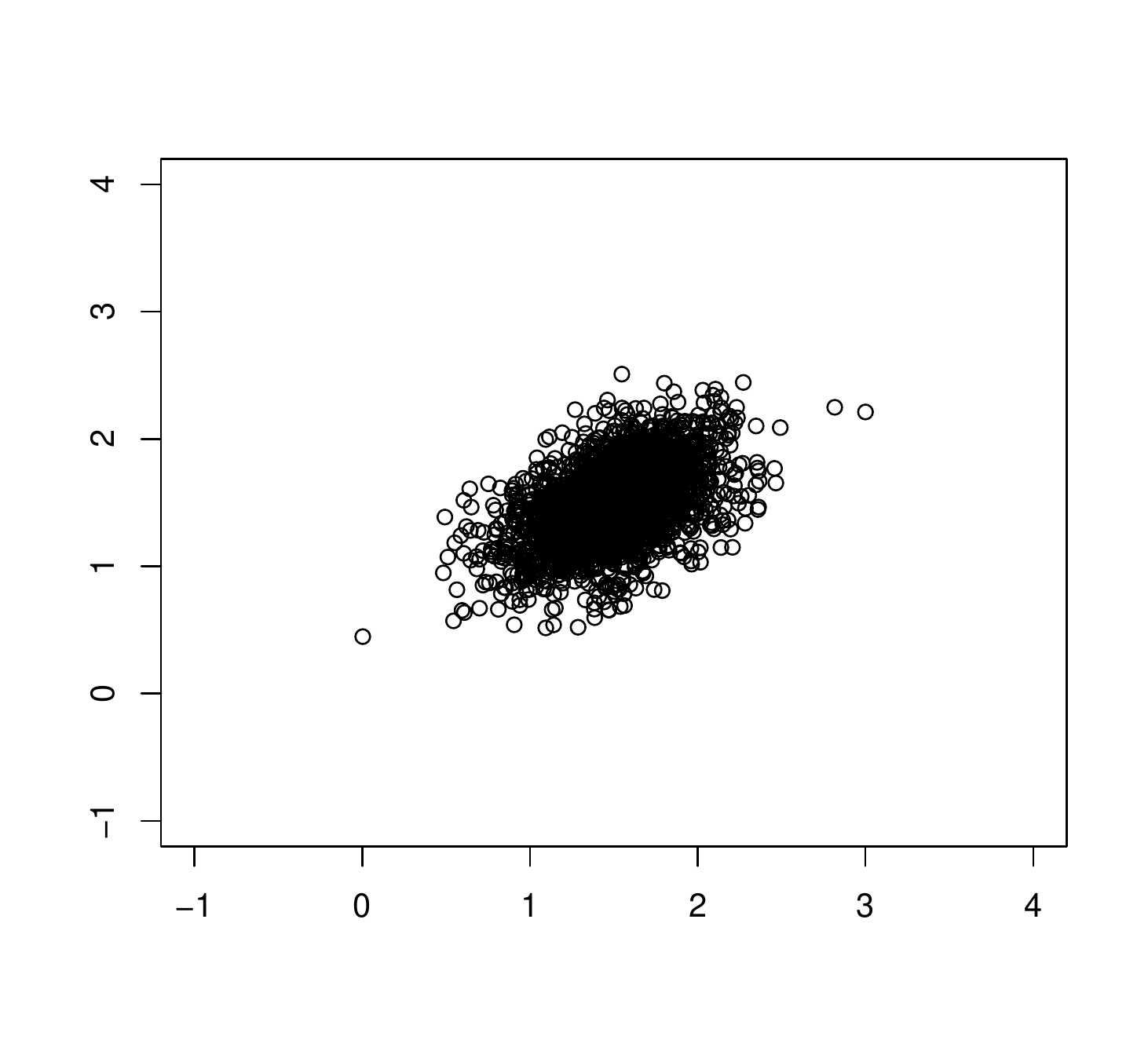}~
    \includegraphics[width=0.5\textwidth]{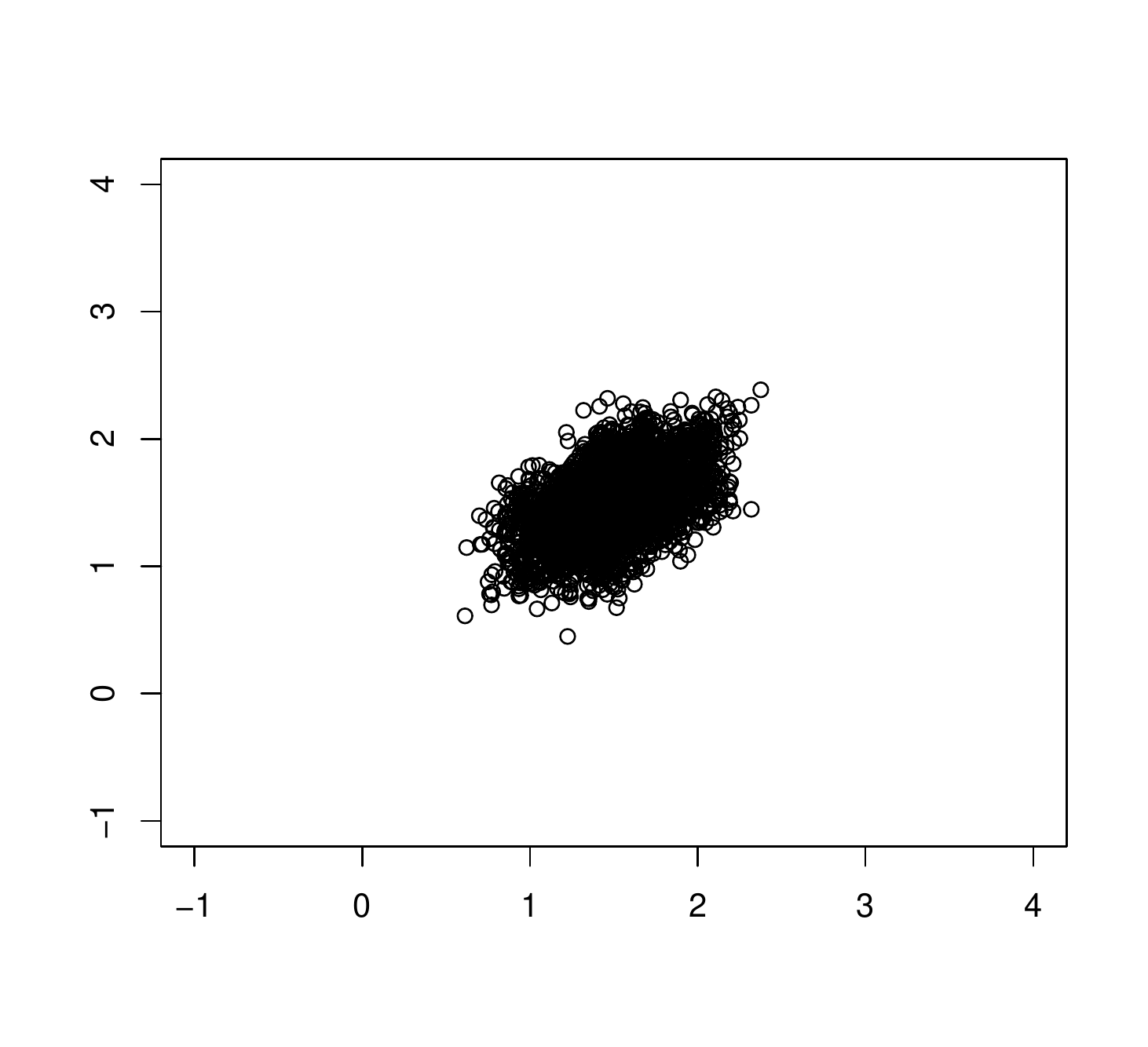}
    \caption{The result from an initial experiment for Algorithm \ref{alg:COM} with maximum iteration value $I=50$ and tropical distance, $\sigma=.1$.  (Left) Results taken from $3,000$ sampled points using (\ref{eq:gaus1}) in Metropolis-Hasting filtering. (Right) Results taken from $3,000$ sampled points using (\ref{eq:gaus2}) in the Metropolis-Hasting filtering.}
    \label{fig:THAR_NORM}
\end{figure}

Using~(\ref{eq:gaus2}) exhibits much less dispersion as compared with~(\ref{eq:gaus1}).  In either case we see that $B_r(x)_{tr}$ is well-defined.

For comparison between the classical HAR sampler with samples from $N(0,\sigma)$ and the tropical equivalent, Figure~\ref{fig:HAR_MOVES} shows a tracing of moves associated with Algorithms~\ref{alg:Gauss1} (left) and~\ref{alg:COM} (right).  The mechanism to make these moves is slightly different.  For Algorithm~\ref{alg:Gauss1}, Metropolis-Hastings filtering is not required so proposed moves are never rejected unless they are outside of a polytope (if a polytope is defined).  In Algorithm~\ref{alg:COM}, we first sample using a tropical HAR method with its own number of iterations, $I$, then assess whether to keep the proposed move using the filtering mechanism shown in Remark~\ref{rem:dist}.  While the output of Algorithm~\ref{alg:COM} is a sample of points, we can think of these sampled points as defining the number of moves used to decorrelate the starting point, $x_k$, and the next sampled point, $x_{I}$ (the final point reached).  

\begin{figure}[H]
    \centering
    \includegraphics[width=0.49\textwidth]{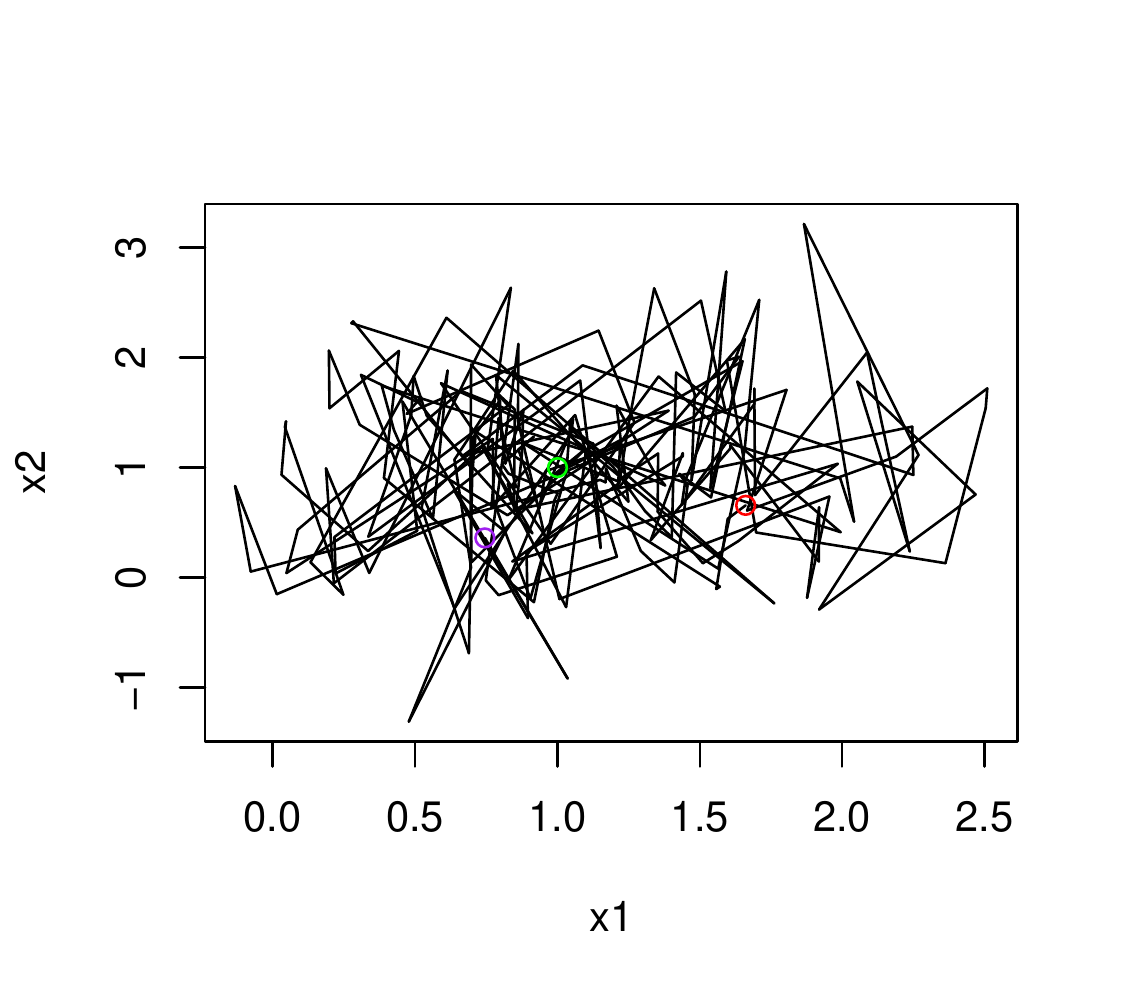}
    \includegraphics[width=0.49\textwidth]{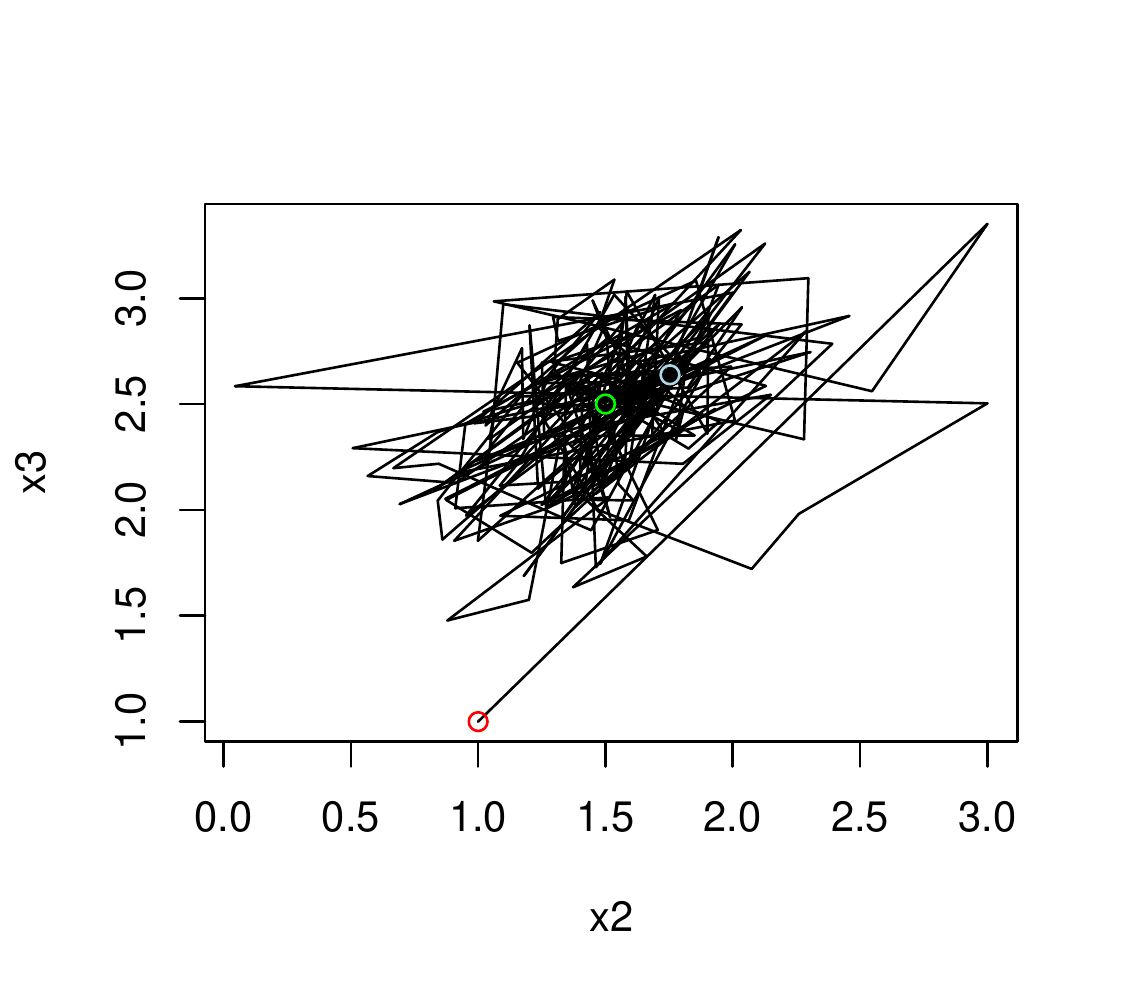}
    \caption{HAR movement to obtain one sample using $I=200$ iterations.  (left) Algorithm~\ref{alg:Gauss1} results.  The {\color{red}red} point is the starting point. The {\color{green} green} point is the center of mass, and the {\color{violet} purple} point is the resultant sampled point.  (right) Algorithm~\ref{alg:COM} moves when sampling from a polytrope. The {\color{red}red} point is the starting point. The {\color{cyan}blue} point is the sampled point. The center of mass point, $\mu$, is the {\color{green}green} point.}
    \label{fig:HAR_MOVES}
\end{figure}

The left figure shows how the algorithm traverses around the center of mass point ({\color{green}green} point) which is what we would expect for sample points taken from a normal distribution.  The sequence of moves for the figure on the right above shows the movements determined by the Metropolis-Hastings filtering in Algorithm~\ref{alg:COM} around $\mu$.

\section{Computational Experiments for Sampling from a Tropical Convex Hull}\label{EXP}

\subsection{Sampling from Tropical Polytopes over $\mathbb{R}^4/\mathbb{R}{\bf 1}$}

In this experiment we consider a tropical polytope with vertices
\[
\begin{array}{ccc}
v_1&=&(0, 0, 0, 0)\\
v_2&=&(0, 1, 3, 1)\\
v_3&=&(0, 1, 2, 5)\\
v_4&=&(0, 2, 5, 10).\\
\end{array}
\]
This tropical polytope is shown in Figure \ref{fig:4dimtropicalpolytope} which is drawn by M.~Joswig using the software {\tt polymake} \cite{polymake:2000}.
\begin{figure}[H]
    \centering
    \includegraphics[width=0.7\textwidth]{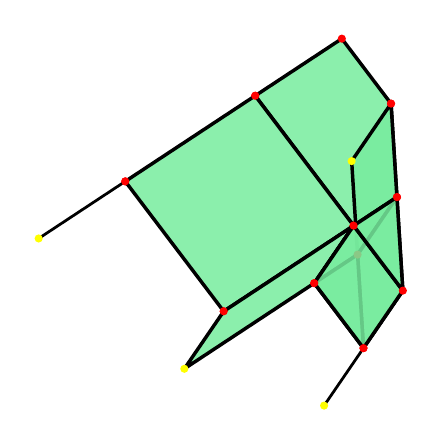}
    \caption{Tropical polytope in $\mathbb{R}^4/\mathbb{R}{\bf 1}$ drawn using {\tt polymake}.  Yellow points represent vertices of the tropical polytope. }
    \label{fig:4dimtropicalpolytope}
\end{figure}

This experiment compares the use of the Algorithms~\ref{alg:HAR_vert3} and~\ref{alg:HAR_extrapolation2} on the tropical polytope defined by the vertices above and shown in Figure~\ref{fig:4dimtropicalpolytope}.  In both cases we obtain a sample size of $5,000$ with $I=500$.  In addition we incorporate a ``burn-in,'' $b=1,000$.  The point of $b$ is to allow the algorithm time to move away from areas of the polytope that might lead to biased sampling due to the structure of the polytope itself.  For Algorithm~\ref{alg:HAR_vert3} we have a $d=4$. So given a tropical line segment, $\Gamma_{x_0,v}$, with end points $x_0$ and $v$, and breakpoints, $b_{x_0}$ and $b_v$, $d_{tr}(b_{x_0},u')=d_{tr}(b_{v},v')=4$ where $u'$ and $v'$ are the extended end points.


\begin{figure}[H]
    \centering
    \includegraphics[width=0.3\textwidth]{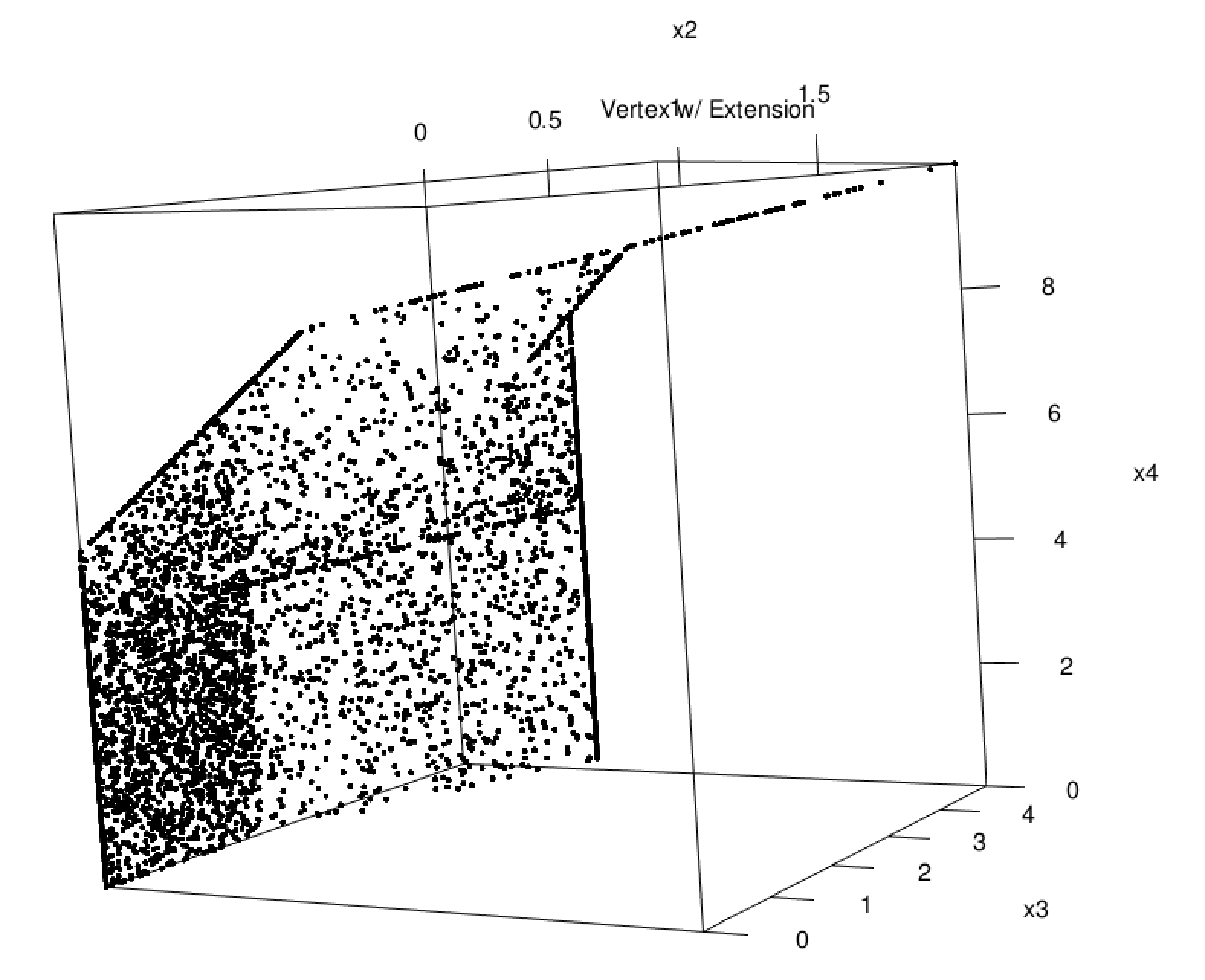}~
    \includegraphics[width=0.3\textwidth]{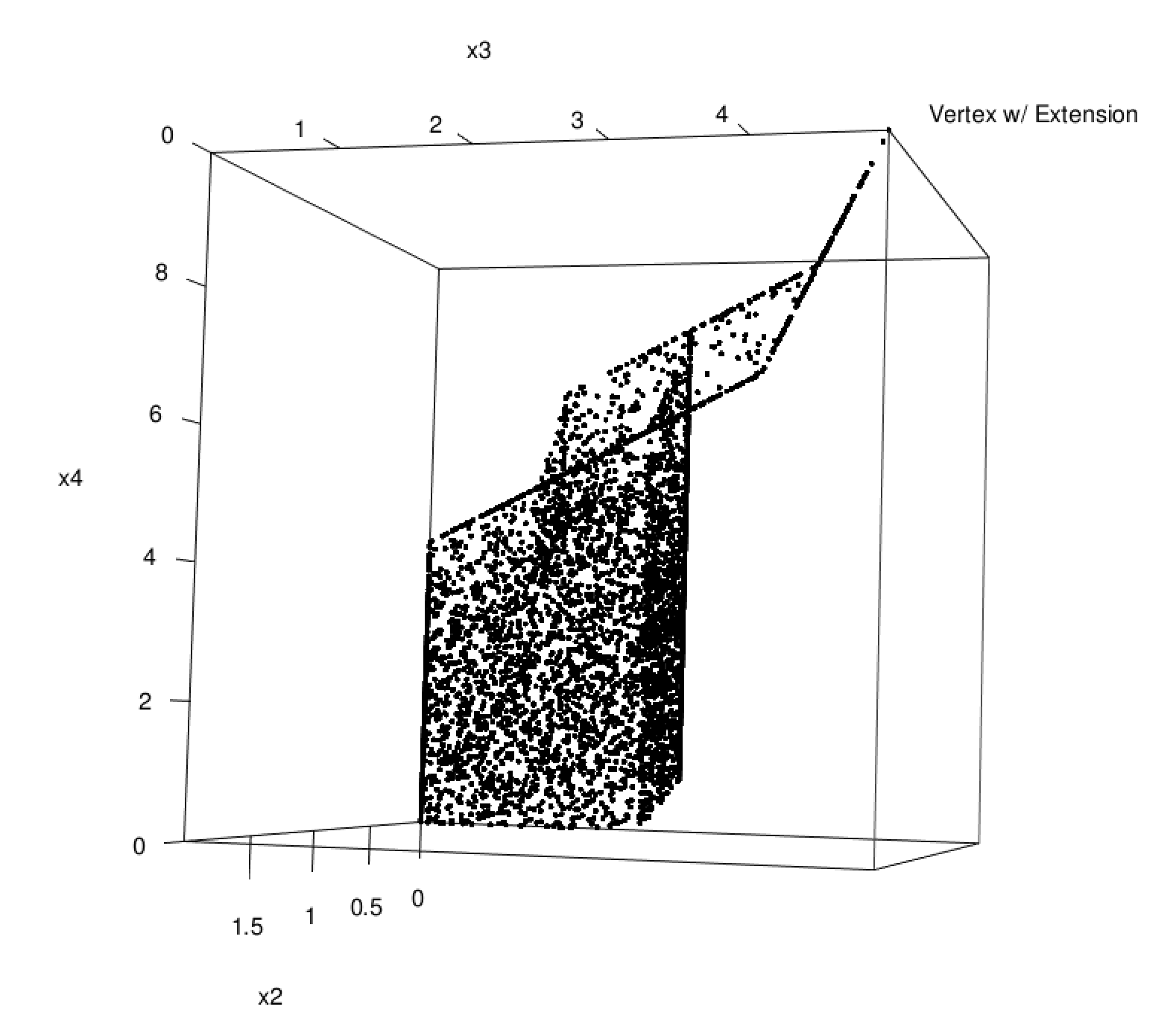}
    \includegraphics[width=0.3\textwidth]{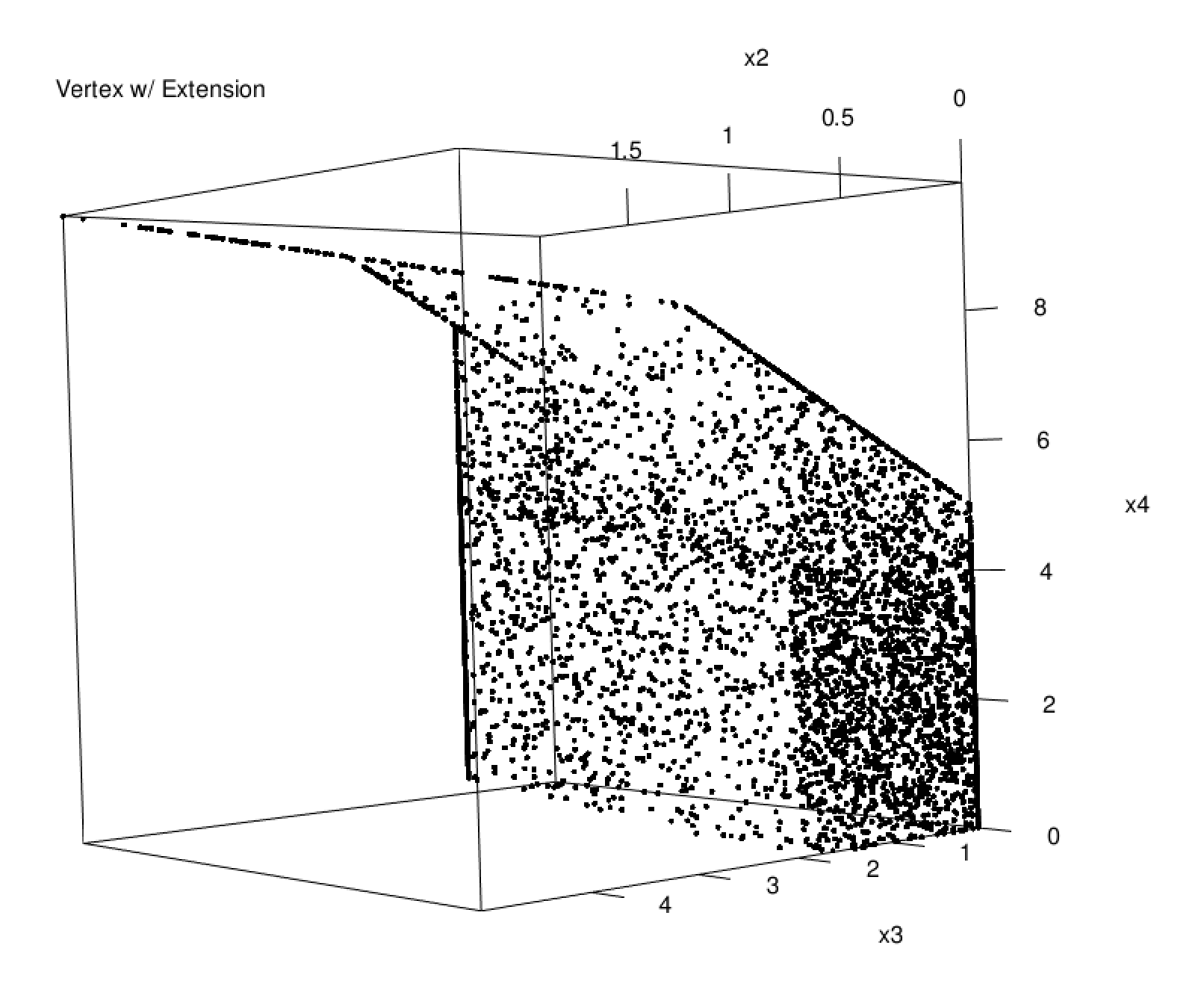}
    \includegraphics[width=0.3\textwidth]{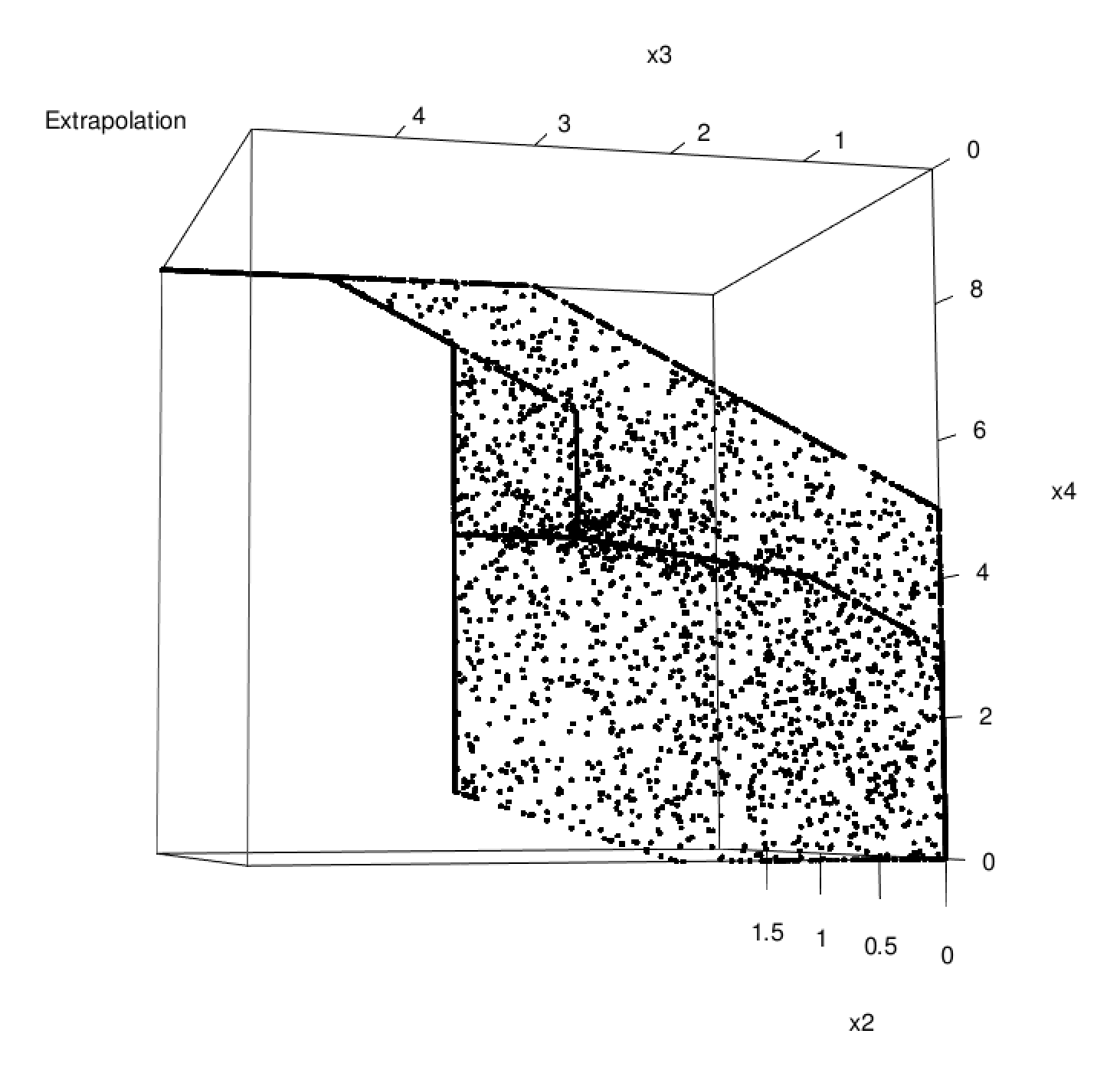}~
    \includegraphics[width=0.3\textwidth]{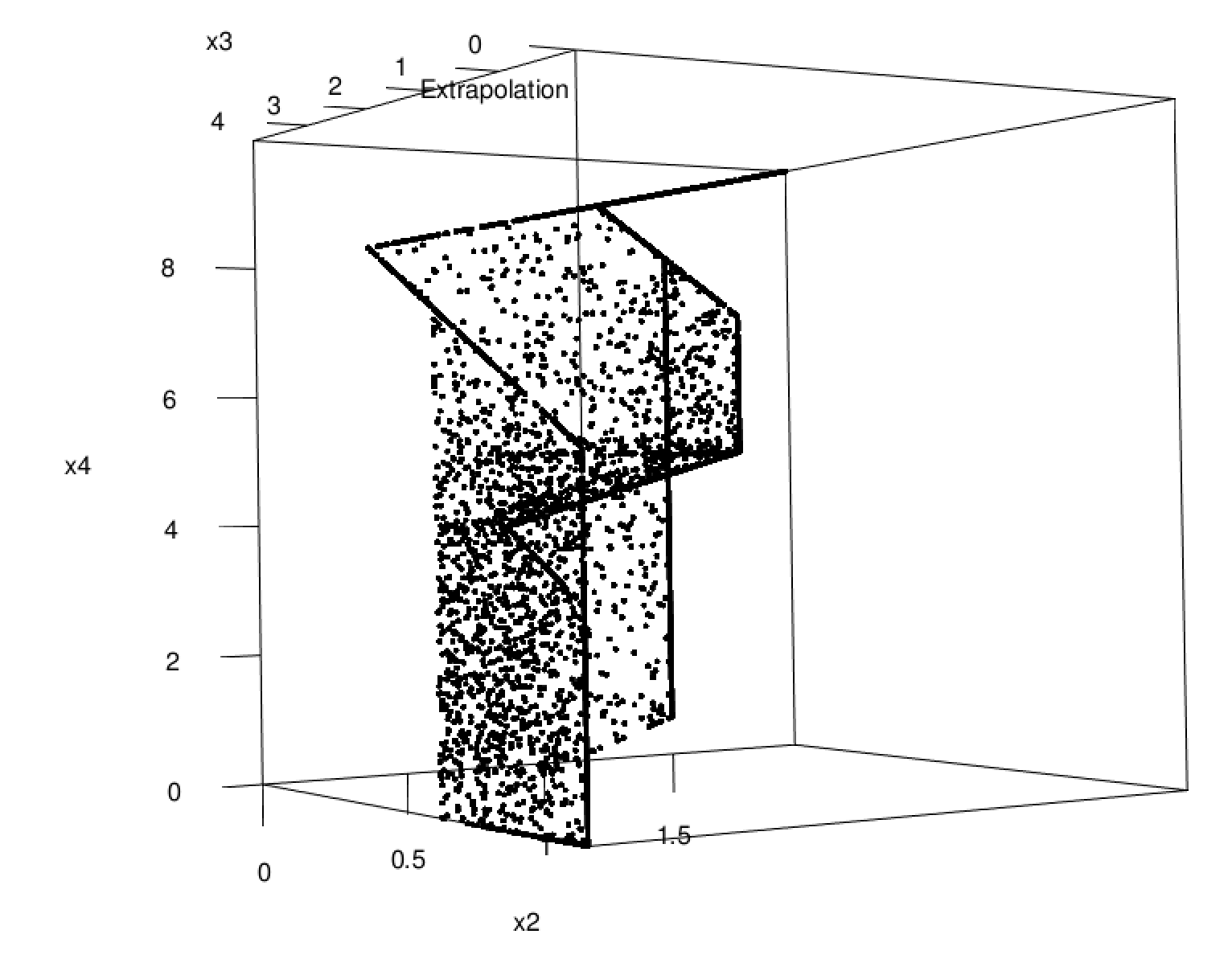}
    \includegraphics[width=0.3\textwidth]{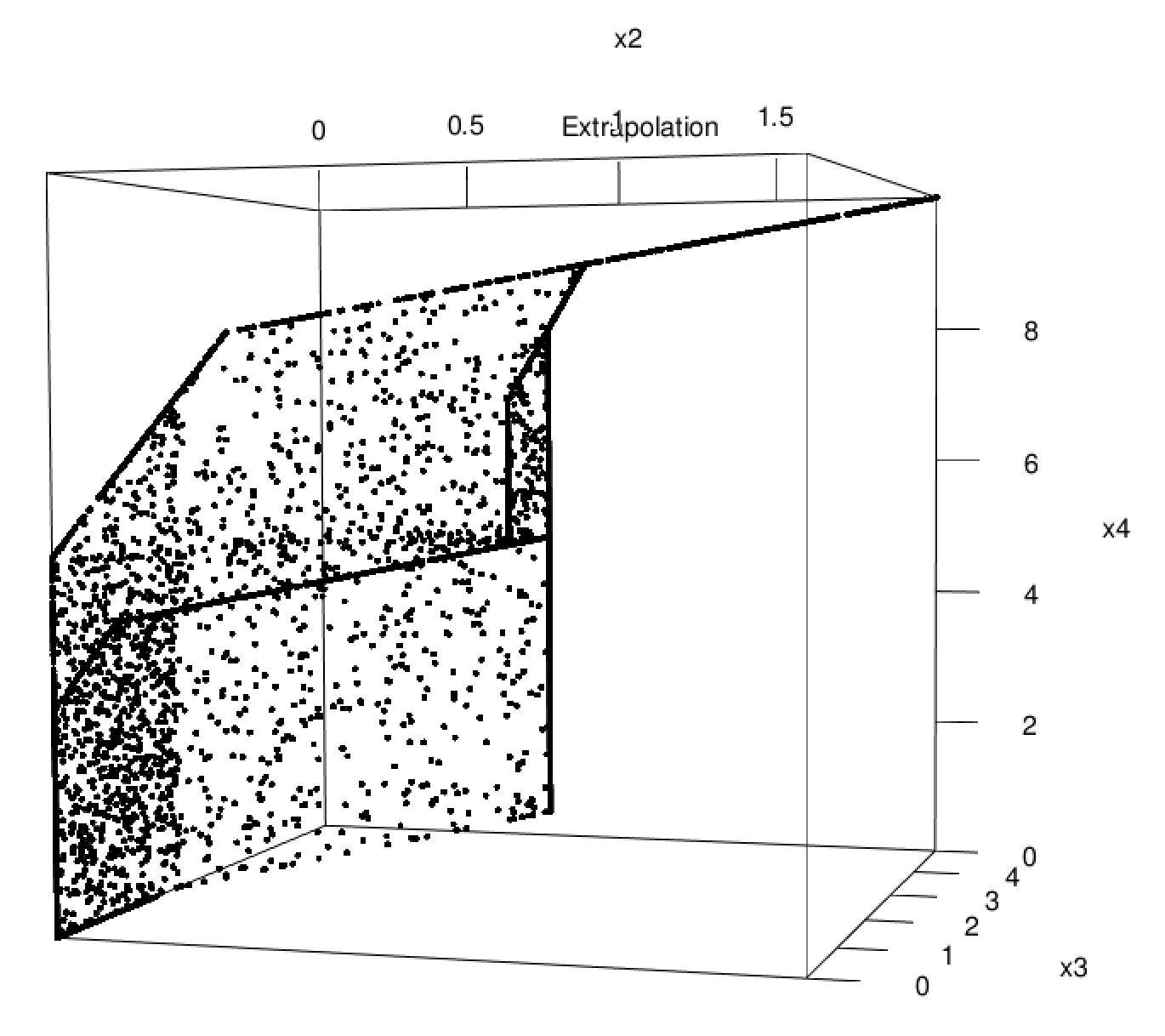}
    \caption{Results using Algorithm~\ref{alg:HAR_vert3} (top row) and Algorithm~\ref{alg:HAR_extrapolation2} (bottom row) to sample 5,000 points from a polytope, $\mathcal{P}\in \mathbb{R}^4/\mathbb{R}\textbf{1}$.}
    \label{fig:ext_5000}
\end{figure}


Both algorithms provide definition to the polytope, $\mathcal{P}$, however, Algorithm~\ref{alg:HAR_extrapolation2} sampling biases towards the edges of $\mathcal{P}$.  By contrast, Algorithm~\ref{alg:HAR_vert3} more reliably samples throughout, $\mathcal{P}$.  Nonetheless, there are edges where Algorithm~\ref{alg:HAR_vert3} exhibits bias especially along some of the one-dimensional faces of $\mathcal{P}$.

\subsubsection{Sampling from Tropical Polytopes with a Given Distribution}

Considering the tropical polytope defined by the vertices in the previous section, we now experiment with Algorithm~\ref{alg:COM}.  Figure~\ref{fig:com_2000} shows results of increasing values of $\sigma_{tr}$ with 2,000 sampled points each with a center of mass point $\mu$.  This is tantamount to progressively increasing the radius, $r$, of the tropical ball $B_r(x)_{tr}$.

\begin{figure}[H]
    \centering
    \includegraphics[width=0.4\textwidth]{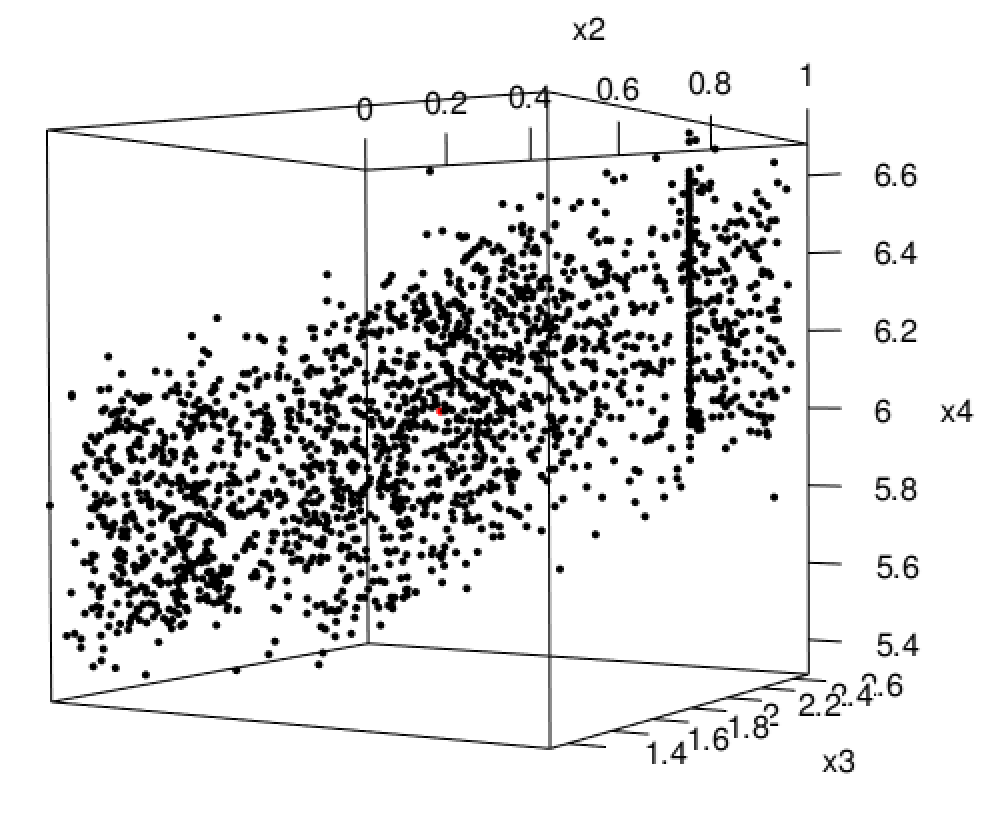}~
    \includegraphics[width=0.4\textwidth]{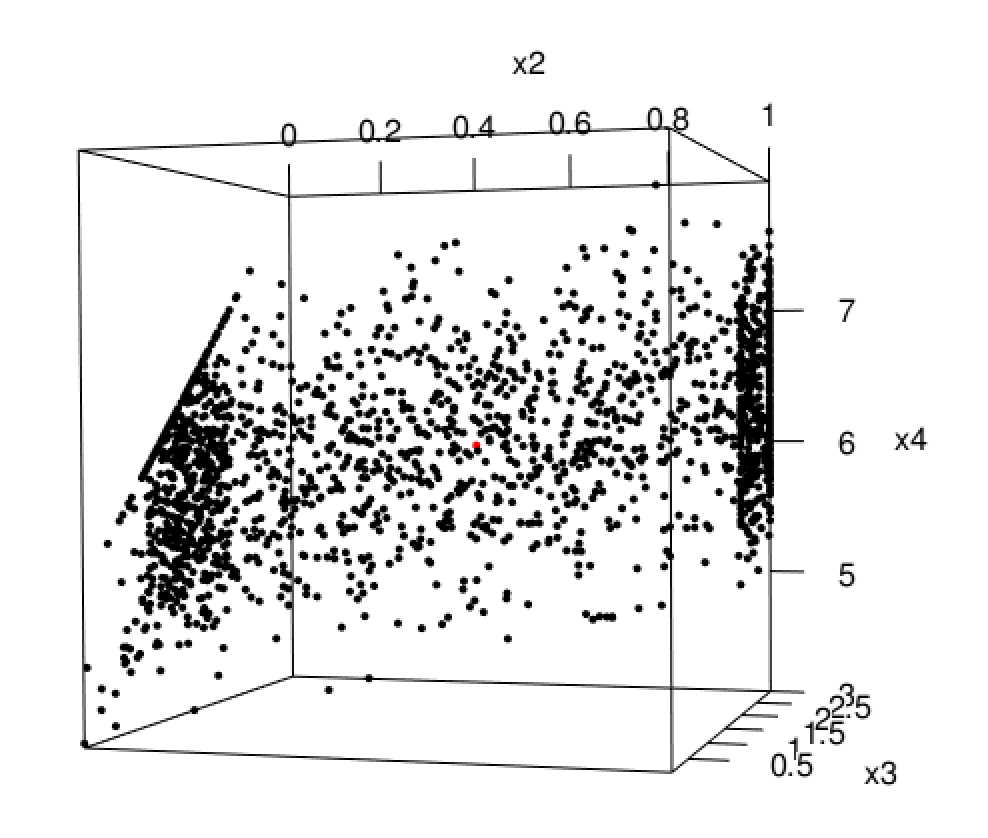}
    \includegraphics[width=0.4\textwidth]{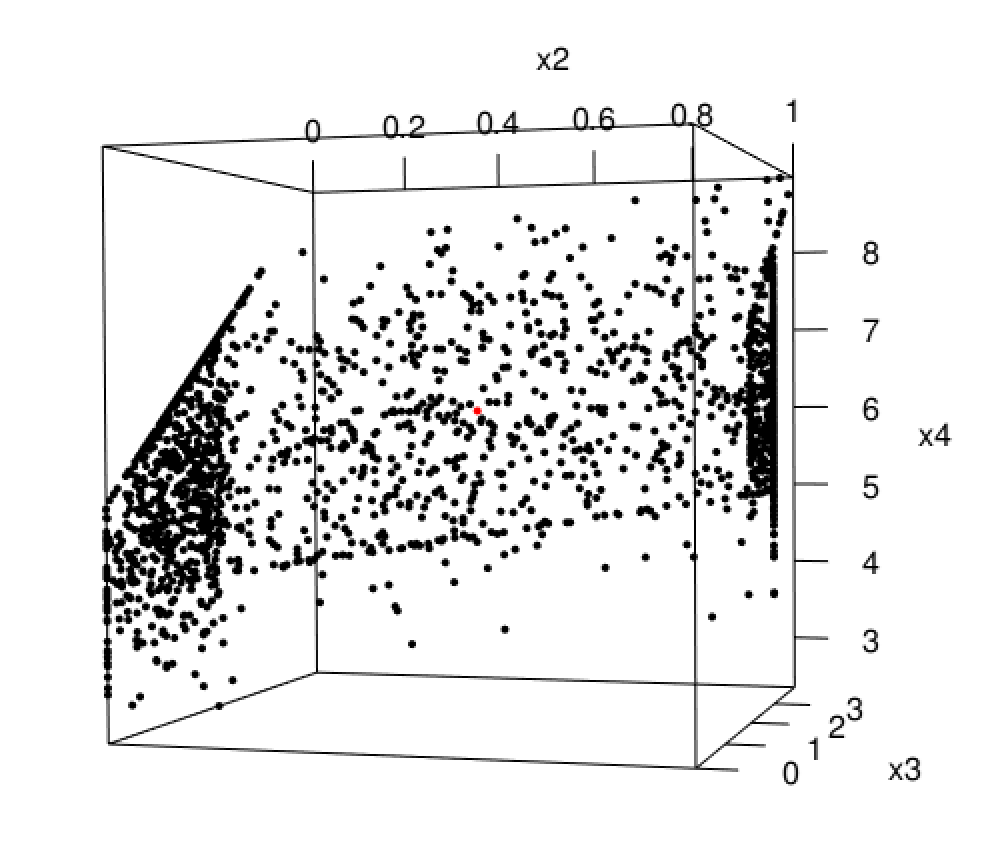}
    \includegraphics[width=0.4\textwidth]{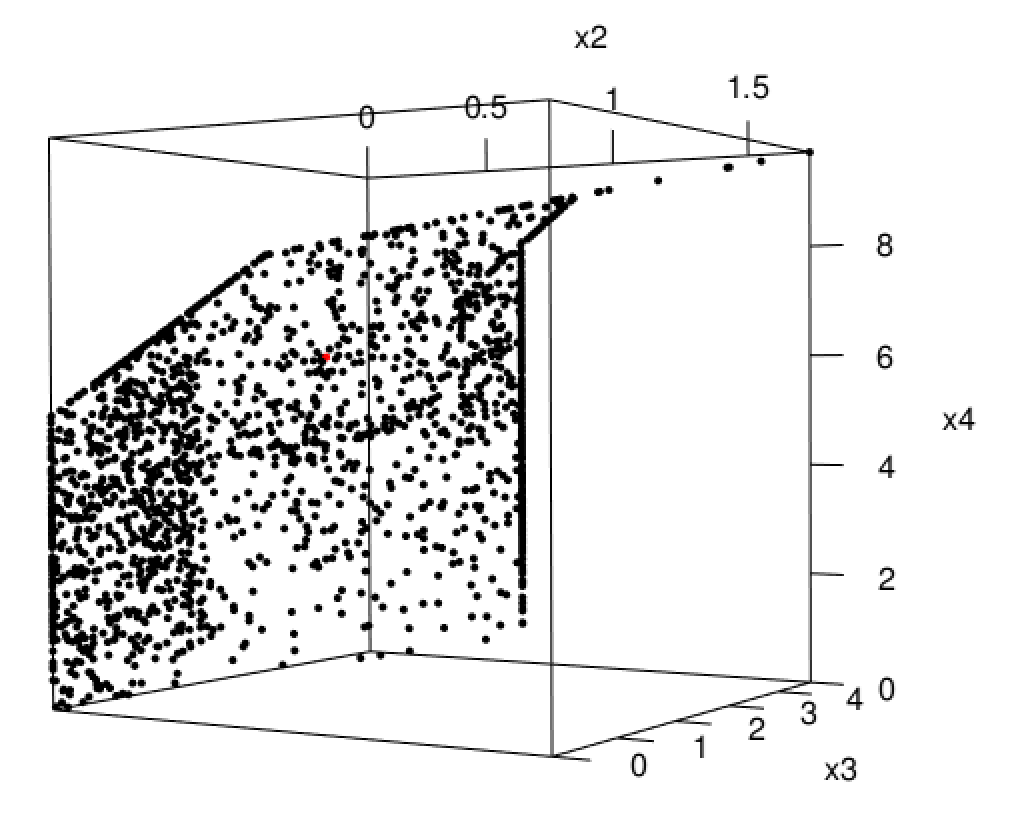}~
    \caption{Results using Algorithm~\ref{alg:COM} to sample 2,000 points from a polytope, $\mathcal{P}\in \mathbb{R}^4/\mathbb{R}\textbf{1}$ with $\mu=(0, 0.5, 2, 6)$ and  $\sigma_{tr}=.05$ (top left), $\sigma_{tr}=.5$ (top right), $\sigma_{tr}=2$ (bottom left), and $\sigma_{tr}=30$ (bottom right). In all cases the {\color{red}red} point is $\mu$.}
    \label{fig:com_2000}
\end{figure}

In the example above, $\mu=(0,0.5,2,6)$ which happens to be in a full-dimensional portion of the tropical polytope, $\mathcal{P}$.  As $\sigma_{tr}$ increases, more points are sampled further away from $\mu$, giving visual definition to $\mathcal{P}$.  To get full definition of $\mathcal{P}$, $\sigma_{tr}$ must be increased well beyond the maximum tropical distance from any two points in $\mathcal{P}$ (in this case, $\{\max\limits_{x,x'}\; d_{tr}(x,x')\;|\;x,x'\in\mathcal{P}\}=10$).

\section{Sampling from the Space of Ultrametrics}\label{sec:ultra}

\subsection{Basics of Ultrametrics}

Suppose we have $[m] := \{1, \ldots , m\}$ and let $u: [m] \times [m] \to \RR$ be a metric over $[m]$, that is, $u$ is a map from $[m]\times [m]$ to $\RR$ such that
\begin{eqnarray}\nonumber
u(i, j) = u(j, i) & \mbox{for all } i, j \in [m]\\\nonumber
u(i, j) = 0 & \mbox{if and only if } i = j\\\nonumber
u(i, j) \leq u(i, k) + u(j, k) & \mbox{for all }i, j, k \in [m].
\end{eqnarray}

Suppose $u$ is a metric on $[m]$.  Then if $u$ satisfies 
\begin{eqnarray}
\max\{u(i, j), u(i, k), u(j, k)\} \mbox{ and is achieved at least twice,}
\end{eqnarray}
then $u$ is called an {\em ultrametric}.  

\begin{example}
Suppose $m = 3$.  Then a metric $u$ on $[m]$ such that
\[
u(1, 2) = 2, \, u(1, 3) = 2, u(2, 3) = 1,
\]
is an ultrametric.
\end{example}

A phylogenetic tree is a weighted tree whose internal nodes do not have labels and whose external nodes, i.e., leaves, have labels.  We consider a rooted phylogenetic tree with a given leaf label set $[m]$.  
\begin{definition}
Suppose we have a rooted phylogenetic tree $T$ with a leaf label set $[m]$.  If a distance from its root to each leaf $i \in [m]$ is the same distance for all $i \in [m]$, then we call $T$ an {\em equidistant tree}.
\end{definition}

\begin{example}
The phylogenetic tree shown in Figure \ref{fig:eqEx} is an equidistant tree with a leaf label set $[5]$ and its pairwise distances are 
\[
u = (4, 4, 4, 4, 2, 2, 2, 1.6, 1.6, 0.6)
\]
which is an ultrametric.
\begin{figure}[h]
    \centering
    \includegraphics[width=0.25\textwidth]{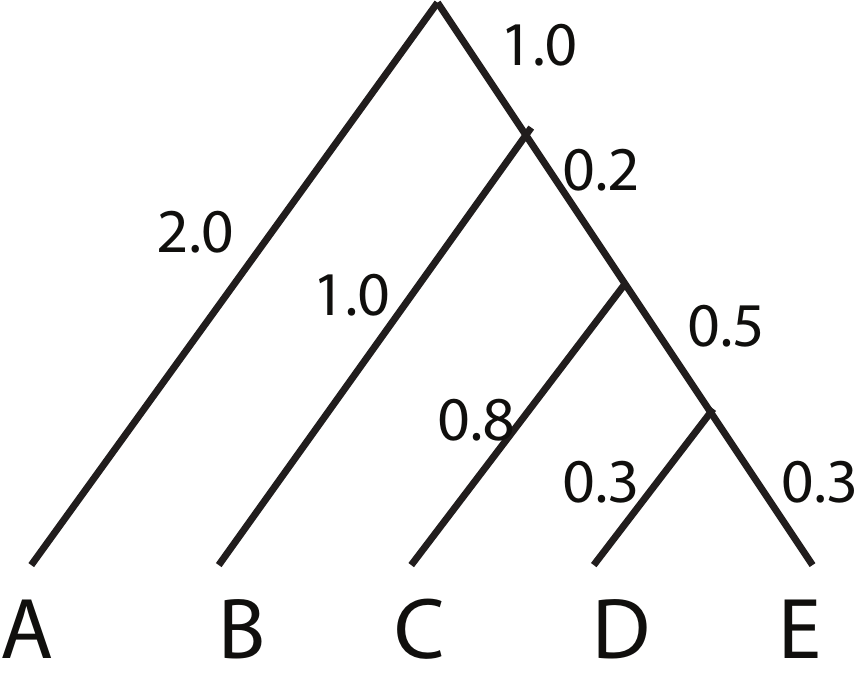}
    \caption{Example of an equidistant tree with a leaf label set $[5]$.}
    \label{fig:eqEx}
\end{figure}
\end{example}

\begin{theorem}[\cite{Buneman}]\label{thm:3pt}
Suppose we have an equidistant tree $T$ with a leaf label set $[m]$ and suppose $u(i, j)$ for all $i, j \in [m]$ is a distance from a leaf $i$ to a leaf $j$.  Then, $u$ is an ultrametric if and only if $T$ is an equidistant tree. 
\end{theorem}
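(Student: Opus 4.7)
The plan is to prove the two directions of this classical Buneman-type characterization separately, beginning with the easier direction and then building the tree in the harder direction by induction.

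For the ($\Leftarrow$) direction, I would start from an equidistant tree $T$ with common root-to-leaf distance $h$ and show the three-point condition directly. Given any three leaves $i, j, k$, consider the subtree they span. Because $T$ is rooted and equidistant, exactly two of the three leaves share a most recent common ancestor that is strictly below the MRCA of all three; call these two leaves the ``cherry pair.'' If $\{j, k\}$ is the cherry pair with MRCA at height $h_1$ and the full MRCA of $\{i, j, k\}$ sits at height $h_2 \geq h_1$, then $u(j, k) = 2(h - h_1)$ while $u(i, j) = u(i, k) = 2(h - h_2)$. The latter two are equal and are $\geq u(j, k)$, so the maximum of the three pairwise distances is attained at least twice. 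Symmetry of $u$ and $u(i, i) = 0$ follow immediately from $T$ being a tree metric, so $u$ is an ultrametric.

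For the ($\Rightarrow$) direction, I would build $T$ from $u$ by induction on $m$. The base case $m = 1$ is a single leaf, and $m = 2$ is a cherry of height $u(1, 2)/2$. For the inductive step, let $M := \max_{i, j} u(i, j)$ and define the relation $i \sim j$ iff $u(i, j) < M$. The key lemma is that $\sim$ is an equivalence relation on $[m]$: reflexivity and symmetry are immediate, and transitivity is exactly the three-point condition: if $u(i, j) < M$ and $u(j, k) < M$ then the max of $\{u(i, j), u(i, k), u(j, k)\}$ cannot be $u(i, j)$ or $u(j, k)$ (since they are $< M$ while the max must equal $M$ if $u(i, k) = M$), contradicting the requirement that the maximum be attained twice unless $u(i, k) < M$ as well. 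Let $C_1, \ldots, C_r$ be the resulting equivalence classes with $r \geq 2$. By induction, each $u|_{C_t \times C_t}$ is an ultrametric (three-point condition is inherited by subsets), so it produces an equidistant tree $T_t$ with common leaf height $h_t$. Moreover $h_t = M/2$ for every $t$ because for any $i \in C_s, j \in C_t$ with $s \neq t$ one has $u(i, j) = M$, and choosing any $k \in C_s$ gives $u(i, k) < M = u(i, j) = u(j, k)$; applying the three-point condition at various triples forces $u(i', j') = M$ for all cross-class pairs, and hence the within-class distances are bounded by $M$, which in turn pins down the height of the root in each $T_t$ to be $M/2$ by using the inductive formula for root height. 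Finally, I form $T$ by attaching the roots of $T_1, \ldots, T_r$ to a common new root, obtaining an equidistant tree of height $M/2$ whose pairwise leaf distances coincide with $u$ by construction.

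The main obstacle is verifying transitivity of $\sim$ cleanly and then showing that the heights of the subtrees $T_t$ all agree, so that gluing them at a single new root produces an equidistant (rather than merely ultrametric-realizing) tree. Both hinge on repeated use of the three-point condition in the precise form ``the maximum of three pairwise distances is attained at least twice,'' so I would phrase a small utility lemma at the outset: if $u(i, j) < M$ and $\max\{u(i, k), u(j, k)\} \leq M$ with the global max attained, then $u(i, k) = u(j, k)$. Once that lemma is in hand, both the equivalence-relation step and the height-matching step become one-line applications, and the inductive construction goes through without further complications.
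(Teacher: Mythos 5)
The paper itself gives no proof of this theorem: it is imported verbatim from Buneman's classical work, so there is no internal argument to compare against. Your plan is the standard one (three-point condition from cherry heights in one direction; in the other, take the maximal distance $M$, split the leaves into classes of the relation $i \sim j \Leftrightarrow u(i,j) < M$, recurse, and glue under a new root), and the two lemmas you isolate — transitivity of $\sim$ via the ``maximum attained at least twice'' condition, and the behaviour of cross-class distances — are the right ones; the transitivity argument you give is correct. Two small blemishes in the easy direction: with your inequality $h_2 \geq h_1$ you are measuring heights above the leaves, so the formulas should be $u(j,k) = 2h_1$ and $u(i,j) = u(i,k) = 2h_2$ (as written, $2(h-h_2) \leq 2(h-h_1)$ contradicts your own conclusion), and for non-binary trees all three leaves may coalesce at a single node, in which case all three distances coincide and the claim is trivial rather than following from a ``cherry pair.''

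The genuine error is in the gluing step. You claim each subtree $T_t$ on a class $C_t$ has height $h_t = M/2$ and try to force this with the three-point condition. That is false: by the very definition of the classes, every within-class distance is strictly less than $M$, so the inductively built $T_t$ has height $h_t = \frac{1}{2}\max_{i,j \in C_t} u(i,j) < M/2$ (and $h_t = 0$ for a singleton class). Likewise, the fact that every cross-class pair has $u(i,j) = M$ needs no three-point argument — it is immediate from $M$ being the maximum together with the definition of $\sim$. The repair is simple and is what actually makes the construction work: place the new root at height $M/2$ above the leaf level and join it to the root of each $T_t$ by an edge of length $M/2 - h_t > 0$. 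The resulting tree is equidistant of height $M/2$, restricts to $T_t$ on each class, and gives every cross-class pair distance $2\cdot(M/2) = M$, so it realizes $u$. With that correction your induction goes through and yields exactly the classical Buneman statement the paper cites.
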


Using Theorem \ref{thm:3pt}, if we consider all possible equidistant trees, then we can consider the space of ultrametrics as the space of phylogenetic trees on $[m]$.

We consider the linear subspace $L_m \subseteq \mathbb{R}^e$, where $e = \binom{m}{2}$,
defined by the linear equations
\begin{equation}
\label{eq:trop_eq}
u_{ij} - u_{ik} + u_{jk}=0
\end{equation} 
for $1\leq i < j < k \leq m$ in $u$.  
The (max-plus) tropicalization of the linear space $L_m$, denoted by $\Trop(L_N) \subseteq \mathbb{R}^{e}/\mathbb{R}{\bf 1}$, is the {\em tropical linear space} defined by ultrametrics $u_{ij} \oplus u_{ik} \oplus u_{jk}$, i.e., $\max\big(u_{ij},\, u_{ik},\, u_{jk} \big)$ is obtained at least twice for all triples $i,\,j,\,k \in [m]$.  Then we have the following theorem:
\begin{theorem}[\cite{AK}]
\label{th:treespace}
The image of $\mathcal{U}_m$ in the tropical projective torus $\mathbb{R}^{e}/\mathbb{R}{\bf 1}$ coincides with $\Trop(L_m)$.  
\end{theorem}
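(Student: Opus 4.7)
The plan is to prove the two inclusions separately: that the image of $\mathcal{U}_m$ sits inside $\Trop(L_m)$, and that every class in $\Trop(L_m)$ has a representative that is a genuine ultrametric. The first direction is essentially definitional: the linear equations cutting out $L_m$ tropicalize to the conditions that, for every triple $i<j<k$, the maximum of $u_{ij},u_{ik},u_{jk}$ be attained at least twice. Since the paper's definition of an ultrametric (the three-point condition, stated just before Theorem~\ref{thm:3pt}) is exactly this ``max attained twice'' condition on every triple, any ultrametric descends to a point of $\Trop(L_m)$ under the quotient $\mathbb{R}^e\to\mathbb{R}^{e}/\mathbb{R}\mathbf{1}$.

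For the reverse inclusion I would take an arbitrary $v\in\Trop(L_m)$ and produce a representative $u = v + c\mathbf{1}$ which is a bona fide metric satisfying the three-point condition. Choose $c\in\mathbb{R}$ large enough that $u_{ij}\geq 0$ for all $i<j$; this is possible because $v$ has only finitely many coordinates. Symmetry is a convention (we index by unordered pairs), and positivity is by construction. The triangle inequality $u_{ij}\le u_{ik}+u_{jk}$ follows from the three-point condition together with non-negativity: if WLOG $u_{ij}$ achieves the maximum among $\{u_{ij},u_{ik},u_{jk}\}$, then it must coincide with (say) $u_{ik}$, and hence $u_{ij} = u_{ik} \le u_{ik} + u_{jk}$ because $u_{jk}\ge 0$. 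Thus $u$ is a metric, and since the three-point condition is preserved under the global shift by $c\mathbf{1}$ (adding the same constant to all three of $u_{ij},u_{ik},u_{jk}$ does not change which coordinates attain the max), $u$ is an ultrametric with $[u]=v$.

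Combining the two inclusions gives the set equality of images inside $\mathbb{R}^{e}/\mathbb{R}\mathbf{1}$. Invoking Theorem~\ref{thm:3pt} then identifies this common set with the space of equidistant rooted phylogenetic trees on leaf label set $[m]$, which is the content of the theorem as used later for HAR sampling on tree space.

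The main conceptual obstacle, as I see it, is not any calculation but rather keeping straight the role of the quotient by $\mathbb{R}\mathbf{1}$: a tropical point $v\in\Trop(L_m)$ need not itself be a metric (its entries can be negative or even violate $u_{ii}=0$ if one tried to include the diagonal), and it is exactly the freedom to shift by $\mathbb{R}\mathbf{1}$ that lets us lift each class to an actual ultrametric. Once that conceptual point is clear, both inclusions are short.
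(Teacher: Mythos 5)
The paper itself gives no proof of Theorem~\ref{th:treespace}; it is quoted from Ardila--Klivans \cite{AK}, so there is no internal argument to compare yours against. Your two-inclusion argument is correct for the statement as the paper literally phrases it, because the text introduces $\Trop(L_m)$ directly as the locus in $\mathbb{R}^{e}/\mathbb{R}{\bf 1}$ where, for every triple $i<j<k$, the maximum of $u_{ij},u_{ik},u_{jk}$ is attained at least twice. With that reading, the forward inclusion is indeed definitional, and your lifting step---shift a representative by $c{\bf 1}$, note that the max-attained-twice condition is invariant under adding the same constant to all coordinates, and that together with nonnegativity it forces the triangle inequality---correctly produces an ultrametric in each class. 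Two small repairs: choose $c$ so the off-diagonal entries are strictly positive (a metric requires $u(i,j)=0$ only when $i=j$), and the ``WLOG'' in the triangle-inequality step should be read as handling the coordinate that attains the maximum, the other two inequalities being immediate since any non-maximal coordinate is bounded by the maximum. The substantive caveat is what your argument does not address: if $\Trop(L_m)$ is understood as the genuine tropicalization of the linear space $L_m$ (the Bergman fan of the graphic matroid of $K_m$), then identifying that tropical variety with the locus cut out by just the three-term relations---i.e.\ showing these relations form a tropical basis---is precisely the nontrivial content of \cite{AK}, which your proof takes as a definition rather than establishes. So what you have proved is that the image of $\mathcal{U}_m$ equals the max-attained-twice locus; this suffices for how the paper uses the theorem (tropical convexity of the space of ultrametrics), but it does not replace the citation under the variety-theoretic reading of $\Trop(L_m)$.
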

Note that a tropical linear space is tropically convex.  Therefore, by Theorem \ref{th:treespace}, the space of ultrametrics is tropically convex.  Thus, in this section, we apply our HAR sampler to sample an ultrametric (equidistant tree) with $[m]$, randomly.  

\subsection{HAR Algorithm on Space of Ultrametrics}

Now we consider applying a HAR algorithm to the space of ultrametrics.  
Suppose we have a set of leaves $[m] := \{1, \ldots , m\}$ and we consider equidistant trees with the same height $h > 0$, that is, rooted phylogenetic trees whose distances from their root to each leaf are $h$, with leaf labels $[m]$.
We consider the space of ultrametrics~ $\Tn \subset \T$ as the space of equidistant trees with $[m]$.  
For tropical HAR algorithm to sample a point from the space of ultrametrics we use UPGMA (unweighted pair group method with arithmetic mean) \cite{sokal58}.
UPGMA is an algorithm to project any $m\times m$ symmetric matrix with its diagonal equal zero onto $\Tn$.  

\begin{algorithm}
\caption{Sampling via HAR algorithm from $\Tn$}\label{alg:HAR3}
\begin{algorithmic}
\State {\bf Input:} Initial point $x_0 \in \Tn$ and maximum iteration $I \geq 1$.
\State {\bf Output:} A random point $x \in \Tn$.
\State Set $k = 0$.
\For{$k= 0, \ldots , I-1$,}
\State Generate a random direction $D^k+(D^k_1, \ldots, D^k_e)$ such that $D^k_i$ is sampled uniformly from $[0, \max(x_0)]$. 
\State Use UPGMA to project a point $y:= x_k + \lambda \cdot D^k$, where $\lambda > 0$, onto $\Tn$. Let $\pi_{\Tn}(y)$ be the projection of $y$ onto $\Tn$. 
\State Generate a random point $x_{k+1}$ from a tropical line segment $\Gamma^k_{x_k, \pi_{\Tn}(y)}$ using Algorithm \ref{eq:troline}.
\EndFor \\
\Return $x := x_{I}$.
\end{algorithmic}
\end{algorithm}

\begin{proposition}
With Algorithm \ref{alg:HAR3}, the time complexity to sample an ultrametric from $\Tn$ via HAR sampling with $I$ iterations is $O(Im^2)$.
\end{proposition}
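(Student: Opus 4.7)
The plan is to bound the per-iteration cost by $O(m^2)$ and then multiply by the number of iterations $I$. Set $e=\binom{m}{2}=O(m^2)$, which is the ambient dimension of $\mathcal{U}_m \subset \mathbb{R}^e/\mathbb{R}\mathbf{1}$. Each iteration of Algorithm \ref{alg:HAR3} breaks into three clearly separable pieces, and I would analyze them one at a time before combining.

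First, I would argue that the random direction $D^k = (D^k_1, \ldots, D^k_e)$ is generated by drawing $e$ independent uniform samples from $[0, \max(x_0)]$. Computing $\max(x_0)$ is $O(e) = O(m^2)$ (done once, not in the loop, or cached), and the draws themselves take $O(e) = O(m^2)$ time. Adding $\lambda \cdot D^k$ coordinate-wise to $x_k$ is another $O(m^2)$ operation.

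Next, for the projection via UPGMA I would invoke the standard complexity of UPGMA on an $m\times m$ dissimilarity matrix. Using an efficient implementation with a maintained nearest-neighbor structure (see, e.g., the discussion in \cite{sokal58} and subsequent references), UPGMA can be executed in $O(m^2)$ time, which is optimal in the sense that even reading the input matrix is $\Theta(m^2)$. This is the step I expect to require the most care to state cleanly, because naive textbook UPGMA is often presented as $O(m^3)$; the proof should either cite a known $O(m^2)$ implementation or else slightly weaken the claim. Finally, sampling a point uniformly on the tropical line segment $\Gamma^k_{x_k, \pi_{\mathcal{U}_m}(y)}$ via Algorithm \ref{alg:troplineseg} is $O(e) = O(m^2)$ by the proposition already proved for tropical line segments.

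Summing the three contributions shows that each iteration costs $O(m^2)$. Since the main loop runs $I$ times and all iterations are independent in their cost, the total complexity is $O(Im^2)$, establishing the proposition. The only real obstacle is pinning down the UPGMA cost as $O(m^2)$ rather than $O(m^3)$; if one only wishes to use the textbook variant, the bound would weaken to $O(Im^3)$, but under the $O(m^2)$ implementation the stated bound $O(Im^2)$ follows directly.
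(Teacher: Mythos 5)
Your proposal is correct and follows essentially the same route as the paper: the paper's proof simply asserts that UPGMA runs in $O(m^2)$ and, since $e=\binom{m}{2}$, concludes the per-iteration cost is $O(m^2)$ and the total is $O(Im^2)$. Your added care about the other per-iteration steps and about needing an $O(m^2)$ (rather than naive $O(m^3)$) UPGMA implementation is a reasonable refinement of the same argument, not a different approach.
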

\begin{proof}
The time complexity of UPGMA is $O(m^2)$. Since $e = \binom{m}{2}$, the time complexity with with $I$ iterations via Algorithm \ref{alg:HAR3} is $O(Im^2)$.
\end{proof}

\begin{example}
We set $m = 4$ which yields 15 possible unique tree topologies.  For the HAR algorithm, we used $I = 30$ and sampled $10,000$ observations from the space of ultrametrics using Algorithm \ref{alg:HAR3}.  We used $(0.1, 1, 0.67, 1, 0.67, 1) \in \mathcal{U}_4$ as an initial point.  The result is shown in the left of Figure \ref{fig:res2}.  Each bin in the histogram in Figure \ref{fig:res2} represents each tree topology of the 15 tree topologies for equidistant trees.  
To compare with our simulation, we conducted sampling using the {\tt rcoal()} function in the {\tt phangorn} package in {\tt R} \cite{phangorn}. 
Then we increased the sample size to $10,000$ with the {\tt rcoal()} function and the result is shown in the right of Figure \ref{fig:res2}.
\begin{figure}[H]
    \centering
    \includegraphics[width=0.44\textwidth]{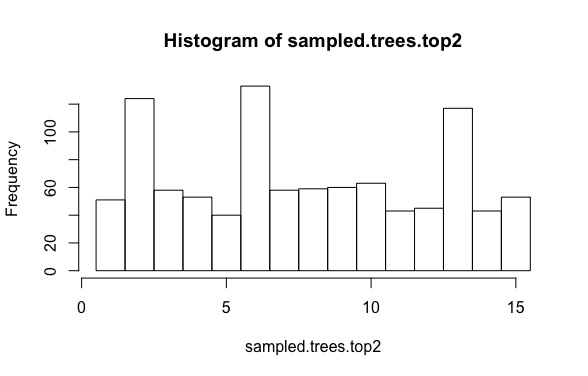}
     \includegraphics[width=0.44\textwidth]{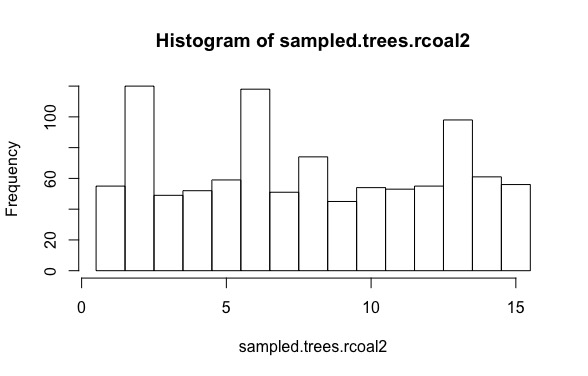}
    \caption{(Left) The result from an initial experiment with Algorithm \ref{alg:HAR3} with $10,000$ sampled points. (Right) The result from an experiment using the {\tt rcoal} function in the {\tt phangorn} package with $10,000$ sampled points.  Each bin represents each unique tree topology of equidistant trees with $m = 4$ leaves.}
    \label{fig:res2}\label{fig:res4}
\end{figure}
\end{example}

\section{Conclusion}

In this paper we introduce novel HAR MCMC methods to sample from a tropical polytope and show that these methods perform well for a variety of generic tropical polytopes.  In addition, we show that using these algorithms we can sample from a tropical polytope uniformly.  We extend these HAR methodologies to sample according to a distribution of choice with  Metropolis-Hastings filtering.

Even though we show that while Algorithm~\ref{alg:HAR_vert3} samples well throughout given tropical polytopes of varying dimensions via computational experiments, a transition probability of a Markov chain is not symmetric while a transition probability of a Markov chain in Algorithms \ref{alg:HAR_extrapolation} and \ref{alg:HAR_extrapolation2} is symmetric. It seems that the key element of sampling from the uniform distribution over a given tropical polytope via HAR sampler is how to draw a random tropical line segment from a point in the boundary of the tropical polytope to another point on the boundary.  Thus, even though a transition probability of a Markov chain in the vertex HAR with extended tropical line segments is not symmetric, it is worth investigating how to make these transition probabilities symmetric.  It is also worth noting that if we solely rely on rejection sampling,  Algorithm~\ref{alg:HAR_vert3} is not very efficient on sampling from a given tropical polytope.  Therefore, we project end points of an extended line segment onto the tropical polytope and then we project these projections back onto the extension of the line segment. If we do, we can reduce the computational time of Algorithm~\ref{alg:HAR_vert3}.  


The next step is to apply a proposed HAR sampling method to problems in polyhedral geometry and statistics.  One of applications of classical HAR sampler over an Euclidean space is to estimate the volume of a classical polytope, which is the convex hull of finitely many vertices.  In similar fashion we are also interested in the application of our HAR sampler using the tropical metric to the {\em volume} of a tropical polytope over the tropical semiring using the max-plus algebra \cite{Joswig}.  It is well-known that estimating the volume of a tropical polytope is very hard \cite{Gaubert}. Our idea is to apply our proposed HAR sampling method from a tropical polytope which is an
analogue to the classical volume estimation methods developed by Cousins and Vempala~\citep{CV}.   Our initial computational experiments suggest that our HAR sampler with the distribution proportional to equation \eqref{eq:gaus2} from a tropical polytope works very well to estimate its volume.  

We can also consider the use of tropical HAR methods in the field of causal inference using the max-linear Bayesian network for extreme value statistics.  With our HAR sampling method we might be able to generalize the Wang-Stoev conditional sampling algorithm for parameterized models in extreme spatial statistics and time-series to learn a directed acyclic graph (DAG) for causal inference~\citep{Tran}.

\section*{Acknowledgement}

The authors thank Profs.~Michael Joswig and Ngoc Tran for useful conversations and discussions. RY and DB are partially supported from NSF DMS 1916037.
KM is partially supported by JSPS KAKENHI Grant Numbers	JP18K11485, JP22K19816, JP22H02364.

\bibliographystyle{plain}  
\bibliography{refs}  

\end{document}